\renewcommand{\leq}{\leqslant}
\renewcommand{\geq}{\geqslant}
\renewcommand{\epsilon}{\varepsilon}
\DeclareMathOperator*{\argmin}{\mathrm{arg\,min}}
\DeclareMathOperator*{\argmax}{\mathrm{arg\,max}}
\DeclarePairedDelimiter\floor{\lfloor}{\rfloor}
\DeclarePairedDelimiter{\ceil}{\lceil}{\rceil}
\newtheorem{theorem}{Theorem}[section]
\newtheorem{corollary}[theorem]{Corollary}
\newtheorem{fact}[theorem]{Fact}
\newtheorem{lemma}[theorem]{Lemma}
\newtheorem{proposition}[theorem]{Proposition}
\newtheorem{remark}[theorem]{Remark}
\begin{document}

\begin{center}
 {\Large Boosting Frank-Wolfe by Chasing Gradients}
\end{center}

\vspace{7mm}

\noindent\textbf{Cyrille W.~Combettes}\hfill\href{mailto:cyrille@gatech.edu}{\ttfamily cyrille@gatech.edu}\\
\emph{\small School of Industrial and Systems Engineering\\
Georgia Institute of Technology\\
Atlanta, GA, USA}\\
\\
\textbf{Sebastian Pokutta}\hfill\href{mailto:pokutta@zib.de}{\ttfamily pokutta@zib.de}\\
\emph{\small Institute of Mathematics and Department for AI in Society, Science, and Technology\\
Technische Universit\"at Berlin and Zuse Institute Berlin\\
Berlin, Germany}

\vspace{5mm}

\begin{center}
\begin{minipage}{0.85\textwidth}
\begin{center}
 \textbf{Abstract}
\end{center}
 {\small The Frank-Wolfe algorithm has become a popular first-order optimization algorithm for it is simple and projection-free, and it has been successfully applied to a variety of real-world problems. Its main drawback however lies in its convergence rate, which can be excessively slow due to naive descent directions. We propose to speed up the Frank-Wolfe algorithm by better aligning the descent direction with that of the negative gradient via a subroutine. This subroutine chases the negative gradient direction in a matching pursuit-style while still preserving the projection-free property. Although the approach is reasonably natural, it produces very significant results. We derive convergence rates $\mathcal{O}(1/t)$ to $\mathcal{O}(e^{-\omega t})$ of our method and we demonstrate its competitive advantage both per iteration and in CPU time over the state-of-the-art in a series of computational experiments.}
\end{minipage}
\end{center}

\vspace{0mm}

\section{Introduction}

Let $(\mathcal{H},\langle\cdot,\cdot\rangle)$ be a Euclidean space. In this paper, we address the constrained convex optimization problem
\begin{align}
 \min_{x\in\mathcal{C}}f(x)\label{pb}
\end{align}
where $f:\mathcal{H}\rightarrow\mathbb{R}$ is a smooth convex function and $\mathcal{C}\subset\mathcal{H}$ is a compact convex set. A natural approach to solving Problem~\eqref{pb} is to apply any efficient method that works in the unconstrained setting and add projections back onto $\mathcal{C}$ when the iterates leave the feasible region. However, there are situations where projections can be very expensive while linear minimizations over $\mathcal{C}$ are much cheaper. For example, if $\mathcal{C}=\{X\in\mathbb{R}^{m\times n}\mid\|X\|_{\operatorname{nuc}}\leq\tau\}$ is a nuclear norm-ball, a projection onto $\mathcal{C}$ requires computing an SVD, which has complexity $\mathcal{O}(mn\min\{m,n\})$, while a linear minimization over $\mathcal{C}$ requires only computing the pair of top singular vectors, which has complexity $\mathcal{O}(\operatorname{nnz})$ where $\operatorname{nnz}$ denotes the number of nonzero entries. Other examples include the flow polytope, the Birkhoff polytope, the matroid polytope, or the set of rotations; see, e.g., \citet{hazan12}.\\

In these situations, the Frank-Wolfe algorithm (FW) \citep{fw56}, a.k.a.~conditional gradient algorithm \citep{polyak66cg}, becomes the method of choice, as it is a simple projection-free algorithm relying on a linear minimization oracle over $\mathcal{C}$. At each iteration, it calls the oracle $v_t\leftarrow\argmin_{v\in\mathcal{C}}\langle\nabla f(x_t),v\rangle$ and moves in the direction of this vertex, ensuring that the new iterate $x_{t+1}\leftarrow x_t+\gamma_t(v_t-x_t)$ is feasible by convex combination, with a step-size $\gamma_t\in\left[0,1\right]$. Hence, FW can be seen as a projection-free variant of projected gradient descent trading the gradient descent direction $-\nabla f(x_t)$ for the vertex direction $v_t-x_t$ minimizing the linear approximation of $f$ at $x_t$ over $\mathcal{C}$. FW has been applied to traffic assignment problems \citep{leblanc75}, low-rank matrix approximation \citep{shalev11rank}, structural SVMs \citep{lacoste13svm}, video co-localization \citep{joulin15video}, infinite RBMs \citep{ping16rbm}, and, e.g., adversarial learning \citep{chen20adversarial}.\\

The main drawback of FW is that the modified descent direction leads to a sublinear convergence rate $\mathcal{O}(1/t)$, which cannot be improved upon in general as an asymptotic lower bound $\Omega(1/t^{1+\delta})$ holds for any $\delta>0$ \citep{canon68}. More recently, \citet{jaggi13fw} provided a simple illustration of the phenomenon: if $f:x\in\mathbb{R}^n\mapsto\|x\|_2^2$ is the squared $\ell_2$-norm and $\mathcal{C}=\Delta_n$ is the standard simplex, then the primal gap at iteration $t\in\llbracket1,n\rrbracket$ is lower bounded by $1/t-1/n$; see also \citet{lan13complex} for a lower bound $\Omega(LD^2/t)$ on an equivalent setup, exhibiting an explicit dependence on the smoothness constant $L$ of $f$ and the diameter $D$ of $\mathcal{C}$.\\

Hence, a vast literature has been devoted to the analysis of higher convergence rates of FW if additional assumptions on the properties of $f$, the geometry of $\mathcal{C}$, or the location of $\argmin_\mathcal{C}f$ are met. Important contributions include:
\begin{enumerate}[label=(\roman*)]
 \item $\mathcal{O}(e^{-\omega t})$ if $\mathcal{C}$ is strongly convex and $\inf_\mathcal{C}\|\nabla f\|>0$ \citep{polyak66cg},
 \item $\mathcal{O}(e^{-\omega t})$ if $f$ is strongly convex and $\argmin_\mathcal{C}f\subset\operatorname{relint}(\mathcal{C})$ \citep{gm86},
 \item $\mathcal{O}(1/t^2)$ if $f$ is gradient dominated and $\mathcal{C}$ is strongly convex \citep{garber15faster}.
\end{enumerate}

More recently, several variants to FW have been proposed, achieving linear convergence rates without excessively increasing the per-iteration complexity. These include the following:
\begin{enumerate}[label=(\roman*)]
 \item $\mathcal{O}(e^{-\omega t})$ when $f$ is strongly convex and $\mathcal{C}$ is a polytope \citep{garber16,lacoste15linear,pok18bcg},
 \item $\mathcal{O}(e^{-\omega t})$ with constants depending on the sparsity of the solution when $f$ is strongly convex and $\mathcal{C}$ is a polytope, of the form $\{x\in\mathbb{R}^n\mid Ax=b,x\geq0\}$ with vertices in $\{0,1\}^n$ \citep{garber16dicg}, or of arbitrary form \citep{bashiri17dicg}.
\end{enumerate}

\paragraph{Contributions.} We propose the \emph{Boosted Frank-Wolfe} algorithm (BoostFW), a new and intuitive method speeding up the Frank-Wolfe algorithm by chasing the negative gradient direction $-\nabla f(x_t)$ via a matching pursuit-style subroutine, and moving in this better aligned direction. BoostFW thereby mimics gradient descent while remaining projection-free. We derive convergence rates $\mathcal{O}(1/t)$ to $\mathcal{O}(e^{-\omega t})$. Although the linear minimization oracle may be called multiple times per iteration, we demonstrate in a series of computational experiments the competitive advantage both per iteration and in CPU time of our method over the state-of-the-art. Furthermore, BoostFW does not require line search to achieve strong empirical performance, and it does not need to maintain the decomposition of the iterates. Naturally, our approach can also be used to boost the performance of any Frank-Wolfe-style algorithm.

\paragraph{Outline.} We start with notation and definitions and we present some background material on the Frank-Wolfe algorithm (Section~\ref{sec:prelim}). We then move on to the intuition behind the design of the Boosted Frank-Wolfe algorithm and present its convergence analysis (Section~\ref{sec:boofw}). We validate the advantage of our approach in a series of computational experiments (Section~\ref{sec:exp}). Finally, a couple of remarks conclude the paper (Section~\ref{sec:conclusion}). All proofs are available in Appendix~\ref{apx:proofs}. The Appendix further contains complementary plots (Appendix~\ref{apx:plots}), an application of our approach to boost the Decomposition-Invariant Pairwise Conditional Gradient algorithm (DICG) \citep{garber16dicg} (Appendix~\ref{apx:boostdicg}), and the convergence analysis of the line search-free Away-Step Frank-Wolfe algorithm (Appendix~\ref{apx:afw}). We were later informed that the latter analysis was already derived by \citet{pedregosa20} in a more general setting.

\section{Preliminaries}
\label{sec:prelim}

We work in a Euclidean space $(\mathcal{H},\langle\cdot,\cdot\rangle)$ equipped with the induced norm $\|\cdot\|$. Let $\mathcal{C}\subset\mathcal{H}$ be a nonempty compact convex set. If $\mathcal{C}$ is a polytope, let $\mathcal{V}$ be its set of vertices. Else, slightly abusing notation, we refer to any point in $\mathcal{V}\coloneqq\partial\mathcal{C}$ as a \emph{vertex}. We denote by $D\coloneqq\max_{x,y\in\mathcal{C}}\|y-x\|$ the diameter of $\mathcal{C}$.

\subsection{Notation and definitions} 

For any $i,j\in\mathbb{N}$ satisfying $i\leq j$, the brackets $\llbracket i,j\rrbracket$ denote the set of integers between (and including) $i$ and $j$. The indicator function for an event $A$ is $\mathds{1}_A\coloneqq1\text{ if }A\text{ is true else }0$. For any $x\in\mathbb{R}^n$ and $i\in\llbracket1,n\rrbracket$, $[x]_i$ denotes the $i$-th entry of $x$. Given $p\geq1$, the $\ell_p$-norm in $\mathbb{R}^n$ is $\|\cdot\|_p:x\in\mathbb{R}^n\mapsto(\sum_{i=1}^n|[x]_i|^p)^{1/p}$ and the closed $\ell_p$-ball of radius $\tau>0$ is $\mathcal{B}_p(\tau)\coloneqq\{x\in\mathbb{R}^n\mid\|x\|_p\leq\tau\}$. The standard simplex in $\mathbb{R}^n$ is $\Delta_n\coloneqq\{x\in\mathbb{R}^n\mid1^\top x=1,x\geq0\}=\operatorname{conv}(e_1,\ldots,e_n)$ where $\{e_1,\ldots,e_n\}$ denotes the standard basis, i.e., $e_i=(\mathds{1}_{\{1=i\}},\ldots,\mathds{1}_{\{n=i\}})^\top$. The conical hull of a nonempty set $\mathcal{A}\subseteq\mathcal{H}$ is $\operatorname{cone}(\mathcal{A})\coloneqq\{\sum_{k=1}^K\lambda_ka_k\mid K\in\mathbb{N}\backslash\{0\},\lambda_1,\ldots,\lambda_K\geq0,a_1,\ldots,a_K\in\mathcal{A}\}$. The number of its elements is denoted by $|\mathcal{A}|$.\\

Let $f:\mathcal{H}\rightarrow\mathbb{R}$ be a differentiable function. We say that $f$ is:
\begin{enumerate}[label=(\roman*)]
 \item $L$-smooth if $L>0$ and for all $x,y\in\mathcal{H}$,
\begin{align*}
 f(y)-f(x)-\langle\nabla f(x),y-x\rangle\leq\frac{L}{2}\|y-x\|^2,
\end{align*}
\item $S$-strongly convex if $S>0$ and for all $x,y\in\mathcal{H}$,
\begin{align*}
 f(y)-f(x)-\langle\nabla f(x),y-x\rangle\geq\frac{S}{2}\|y-x\|^2,
\end{align*}
\item\label{def:pl} $\mu$-gradient dominated if $\mu>0$, $\argmin_{\mathcal{H}}f\neq\varnothing$, and for all $x\in\mathcal{H}$,
\begin{align*}
 f(x)-\min_{\mathcal{H}}f
 \leq\frac{\|\nabla f(x)\|^2}{2\mu}.
\end{align*}
\end{enumerate}

Note that although Definition~\ref{def:pl} is defined with respect to the global optimal value $\min_\mathcal{H}f$, the bound holds for the primal gap of $f$ on any compact set $\mathcal{C}\subset\mathcal{H}$:
\begin{align*}
 f(x)-\min_\mathcal{C}f
 \leq f(x)-\min_\mathcal{H}f
 \leq\frac{\|\nabla f(x)\|^2}{2\mu}.
\end{align*}
Definition~\ref{def:pl} is also commonly referred to as the Polyak-\L ojasiewicz inequality or PL inequality \citep{polyak63pl,lojo63}. It is a \emph{local} condition, weaker than that of strong convexity (Fact~\ref{fact:mu}), but it can still provide linear convergence rates for non-strongly convex functions \citep{karimi16pl}. For example, the least squares loss $x\in\mathbb{R}^n\mapsto\|Ax-b\|_2^2$ where $A\in\mathbb{R}^{m\times n}$ and $\operatorname{rank}(A)=m<n$ is not strongly convex, however it is gradient dominated \citep{garber15faster}. See also the Kurdyka-\L ojasiewicz inequality \citep{kurdyka98,lojo63} for a generalization to nonsmooth optimization \citep{bolte15kl}.

\begin{fact}
 \label{fact:mu}
Let $f:\mathcal{H}\rightarrow\mathbb{R}$ be $S$-strongly convex. Then $f$ is $S$-gradient dominated.
\end{fact}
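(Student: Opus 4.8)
The plan is to unpack the two requirements hidden in the definition of $S$-gradient domination: first that $f$ attains its minimum on $\mathcal{H}$, so that $\argmin_{\mathcal{H}}f\neq\varnothing$ and $\min_{\mathcal{H}}f$ is well-defined, and second the quantitative bound $f(x)-\min_{\mathcal{H}}f\leq\|\nabla f(x)\|^2/(2S)$. The single ingredient driving both is the quadratic lower bound furnished by $S$-strong convexity, namely $f(y)\geq f(x)+\langle\nabla f(x),y-x\rangle+\frac{S}{2}\|y-x\|^2$ for all $x,y\in\mathcal{H}$.

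For existence, I would fix an arbitrary basepoint $x_0\in\mathcal{H}$ and read off from the strong convexity inequality that $f(y)\geq f(x_0)+\langle\nabla f(x_0),y-x_0\rangle+\frac{S}{2}\|y-x_0\|^2$, whose right-hand side tends to $+\infty$ as $\|y\|\to\infty$. Hence $f$ is coercive and bounded below; combined with continuity (which follows from differentiability), $f$ attains its infimum on $\mathcal{H}$, so $\argmin_{\mathcal{H}}f\neq\varnothing$.

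For the inequality, I would fix $x\in\mathcal{H}$ and minimize the right-hand side of the strong convexity inequality over $y$. Viewing it as a strictly convex quadratic in $y$, its minimizer is $y=x-\frac{1}{S}\nabla f(x)$, and substituting back yields the value $f(x)-\frac{1}{2S}\|\nabla f(x)\|^2$. Since $f(y)$ dominates this quadratic pointwise, taking the infimum of both sides over $y$ gives $\min_{\mathcal{H}}f\geq f(x)-\frac{1}{2S}\|\nabla f(x)\|^2$, which rearranges to exactly the claimed bound with $\mu=S$.

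There is no serious obstacle here: the only point that deserves care is the legitimacy of passing from the pointwise domination $f(y)\geq q(y)$ to $\min_y f(y)\geq\min_y q(y)$, where $q$ denotes the quadratic lower bound. This holds because evaluating the former at any minimizer $y^\star$ of $f$ shows $\min_{\mathcal{H}}f=f(y^\star)\geq q(y^\star)\geq\min_y q(y)$. Everything else reduces to the elementary minimization of a quadratic.
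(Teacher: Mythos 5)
Your proposal is correct and follows essentially the same route as the paper: minimize the quadratic lower bound from $S$-strong convexity over $y$, obtaining the minimizer $y=x-\nabla f(x)/S$ and the value $f(x)-\|\nabla f(x)\|^2/(2S)$, then compare with $\min_{\mathcal{H}}f$. The only difference is that you spell out the coercivity argument for $\argmin_{\mathcal{H}}f\neq\varnothing$ and the justification for passing to minima on both sides, details the paper's proof leaves implicit.
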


\subsection{The Frank-Wolfe algorithm}
\label{sec:fw}

The Frank-Wolfe algorithm (FW) \citep{fw56}, a.k.a.~conditional gradient algorithm \citep{polyak66cg}, is presented in Algorithm~\ref{fw}. It is a simple first-order projection-free algorithm relying on a linear minimization oracle over $\mathcal{C}$. At each iteration, it minimizes over $\mathcal{C}$ the linear approximation of $f$ at $x_t$, i.e., $\ell_f(x_t):z\in\mathcal{C}\mapsto f(x_t)+\langle\nabla f(x_t),z-x_t\rangle$, by calling the oracle (Line~\ref{fw:lmo}) and moves in that direction by convex combination (Line~\ref{fw:new}). Hence, the new iterate $x_{t+1}$ is guaranteed to be feasible by convexity and there is no need to use projections back onto $\mathcal{C}$. In short, FW solves Problem~\eqref{pb} by minimizing a sequence of linear approximations of $f$ over $\mathcal{C}$.\\

\begin{algorithm}[h]
\caption{Frank-Wolfe (FW)}
\label{fw}
\textbf{Input:} Start point $x_0\in\mathcal{C}$, step-size strategy $\gamma_t\in\left[0,1\right]$.\\
\textbf{Output:} Point $x_T\in\mathcal{C}$.
\textcolor{white}{yolo}
\begin{algorithmic}[1]
\FOR{$t=0$ \textbf{to} $T-1$}
\STATE$v_t\leftarrow\argmin\limits_{v\in\mathcal{V}}\langle\nabla f(x_t),v\rangle$\hfill$\triangleright${ FW oracle}\label{fw:lmo}
\STATE$x_{t+1}\leftarrow x_t+\gamma_t(v_t-x_t)$\label{fw:new}
\ENDFOR
\end{algorithmic}
\end{algorithm}

Note that FW has access to the feasible region $\mathcal{C}$ only via the linear minimization oracle, which receives any $c\in\mathcal{H}$ as input and outputs a point $v\in\argmin_{z\in\mathcal{C}}\langle c,z\rangle=\argmin_{v\in\mathcal{V}}\langle c,v\rangle$. For example, if $\mathcal{H}=\mathbb{R}^n$ and $\mathcal{C}=\{x\in\mathbb{R}^n\mid\|x\|_1\leq\tau\}$ is an $\ell_1$-ball, then $\mathcal{V}=\{\pm\tau e_1,\ldots,\pm\tau e_n\}$ so the linear minimization oracle simply picks the coordinate $e_i$ with the largest absolute magnitude $|[c]_i|$ and returns $-\operatorname{sign}([c]_i)\tau e_i$. In this case, FW accesses $\mathcal{C}$ only by reading coordinates. Some other examples are covered in the experiments (Section~\ref{sec:exp}).\\

The general convergence rate of FW is $\mathcal{O}(LD^2/t)$, where $L$ is the smoothness constant of $f$ and $D$ is the diameter of $\mathcal{C}$ \citep{polyak66cg,jaggi13fw}. There are different step-size strategies possible to achieve this rate. The default strategy is $\gamma_t\leftarrow2/(t+2)$. It is very simple to implement but it does not guarantee progress at each iteration. The next strategy, sometimes referred to as the \emph{short step} strategy and which does make FW a descent algorithm, is $\gamma_t\leftarrow\min\{\langle\nabla f(x_t),x_t-v_t\rangle/(L\|x_t-v_t\|^2),1\}$. It minimizes the quadratic smoothness upper bound on $f$. If $\epsilon_t\coloneqq f(x_t)-\min_\mathcal{C}f$ denotes the primal gap, then
\begin{align*}
 \epsilon_{t+1}
 &\leq
 \begin{cases}
  \epsilon_t-\displaystyle\frac{\langle\nabla f(x_t),x_t-v_t\rangle^2}{2L\|x_t-v_t\|^2}&\text{if }\gamma_t<1\\
  \epsilon_t/2&\text{if }\gamma_t=1.
 \end{cases}
\end{align*}
As we can already see here, a quadratic improvement in progress is obtained if the direction $v_t-x_t$ in which FW moves is better aligned with that of the negative gradient $-\nabla f(x_t)$. The third step-size strategy is a line search $\gamma_t\leftarrow\argmin_{\gamma\in\left[0,1\right]}f(x_t+\gamma(v_t-x_t))$. It is the most expensive strategy but it does not require (approximate) knowledge of $L$ and it often yields more progress per iteration in practice.

\section{Boosting Frank-Wolfe}
\label{sec:boofw}

\subsection{Motivation}
\label{sec:idea}

Suppose that $\mathcal{C}$ is a polytope and 
that the set of global minimizers $\argmin_\mathcal{H}f$ lies on a lower dimensional face. Then FW can be very slow to converge as it is allowed only to follow vertex directions. As a simple illustration, consider the problem of minimizing $f:x\in\mathbb{R}^2\mapsto\|x\|_2^2/2$ over the convex hull of $\{(-1,0)^\top,(1,0)^\top,(0,1)^\top\}$, starting from $x_0=(0,1)^\top$. The minimizer is $x^*=(0,0)^\top$. We computed the first iterates of FW and we present their trajectory in Figure~\ref{zigzag}. We can see that the iterates try to reach $x^*$ by moving \emph{towards} vertices but clearly these directions $v_t-x_t$ are inadequate as they become orthogonal to $x^*-x_t$.\\

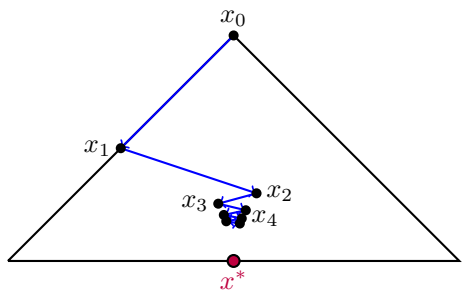
\begin{figure}[h]
 \centering
  \begin{tikzpicture}[thick, scale=1.5]
  \coordinate (a) at (-2, 0);
  \coordinate (b) at (2, 0);
  \coordinate (c) at (0, 2);
  \coordinate (x1) at (-1, 1);
  \coordinate (x2) at (0.203, 0.600);
  \coordinate (x3) at (-0.136, 0.508);
  \coordinate (x4) at (0.108, 0.4497);
  \coordinate (x5) at (-0.0867, 0.408);
  \coordinate (x6) at (0.072, 0.377);
  \coordinate (x7) at (-0.065, 0.352);
  \coordinate (x8) at (0.055, 0.332);
  \coordinate (xx) at (0, 0);
  
  \draw (a) -- (b) -- (c) -- (a);
  
  \draw[->][blue] (c) -- (x1);
  \draw[->][blue] (x1) -- (x2);
  \draw[->][blue] (x2) -- (x3);
  \draw[->][blue] (x3) -- (x4);
  \draw[->][blue] (x4) -- (x5);
  \draw[->][blue] (x5) -- (x6);
  \draw[->][blue] (x6) -- (x7);
  \draw[->][blue] (x7) -- (x8);
  
  \draw[fill=purple] (xx) circle (1.5pt) node [below] {\textcolor{purple}{$x^*$}};
  \draw[fill=black] (c) circle (1pt) node [above] {$x_0$};
  \draw[fill=black] (x1) circle (1pt) node [left] {$x_1$};
  \draw[fill=black] (x2) circle (1pt) node [right] {$x_2$};
  \draw[fill=black] (x3) circle (1pt) node [left] {$x_3$};
  \draw[fill=black] (x4) circle (1pt);
  \draw[fill=black] (x5) circle (1pt);
  \draw[fill=black] (x6) circle (1pt) node [right] {$x_4$};
  \draw[fill=black] (x7) circle (1pt);
  \draw[fill=black] (x8) circle (1pt);
 \end{tikzpicture}
 \caption{FW yields an inefficient zig-zagging trajectory towards the minimizer.}
 \label{zigzag}
\end{figure}

To remedy this phenomenon, \citet{wolfe70} proposed the Away-Step Frank-Wolfe algorithm (AFW), a variant of FW that allows to move \emph{away} from vertices. The issue in Figure~\ref{zigzag} is that the iterates are held back by the weight of vertex $x_0$ in their convex decomposition. Figure~\ref{zigzag_away} shows that AFW is able to remove this weight and thereby to converge much faster to $x^*$. In fact, \citet{lacoste15linear} established that AFW with line search converges at a linear rate $\mathcal{O}\big(LD^2\exp(-(S/(8L))(W/D)^2t)\big)$ for $S$-strongly convex functions over polytopes, where $W$ is the \emph{pyramidal width} of the polytope.\\

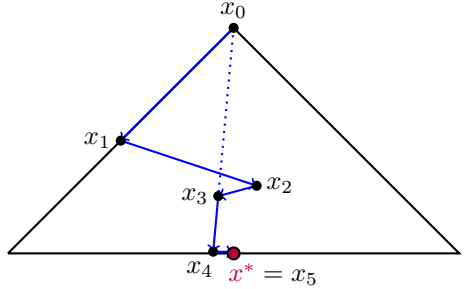
\begin{figure}[h]
\centering
 \begin{tikzpicture}[thick, scale=1.5]
  \coordinate (a) at (-2, 0);
  \coordinate (b) at (2, 0);
  \coordinate (c) at (0, 2);
  \coordinate (x1) at (-1, 1);
  \coordinate (x2) at (0.204, 0.600);
  \coordinate (x3) at (-0.136, 0.508);
  \coordinate (x4) at (-0.181, 0.015);
  \coordinate (x5) at (-0.0022, 0.014);
  \coordinate (xx) at (0, 0);
  
  \draw[dotted][blue] (c) -- (x3);
  
  \draw (a) -- (b) -- (c) -- (a);
  
  \draw[->][blue] (c) -- (x1);
  \draw[->][blue] (x1) -- (x2);
  \draw[->][blue] (x2) -- (x3);
  \draw[->][blue] (x3) -- (x4);
  \draw[->][blue] (x4) -- (x5);
  
  \draw[fill=black] (c) circle (1pt) node [above] {$x_0$};
  \draw[fill=black] (x1) circle (1pt) node [left] {$x_1$};
  \draw[fill=black] (x2) circle (1pt) node [right] {$x_2$};
  \draw[fill=black] (x3) circle (1pt) node [left] {$x_3$};
  \draw[fill=black] (x4) circle (1pt) node [below] {$x_4\quad$};
  \draw[fill=purple] (xx) circle (1.5pt) node [below] {$\quad\quad\quad\textcolor{purple}{x^*}=x_5$};
 \end{tikzpicture}
 \caption{AFW breaks the zig-zagging trajectory by performing away steps. Here, $x_4$ is obtained using an away step which enables $x_5=x^*$, speeding up the algorithm considerably.}
  \label{zigzag_away}
\end{figure}

However, these descent directions are still not as favorable as those of gradient descent, the pyramidal width is a dimension-dependent quantity, and AFW further requires to maintain the decomposition of the iterates onto $\mathcal{V}$ which can become very expensive both in memory usage and computation time \citep{garber16dicg}. Thus, we aim at improving the FW descent direction by directly estimating the gradient descent direction $-\nabla f(x_t)$ using $\mathcal{V}$, in order to maintain the projection-free property. Suppose that $-\nabla f(x_t)\in\operatorname{cone}(\mathcal{V}-x_t)$ and that we are able to compute its conical decomposition, i.e., we have $-\nabla f(x_t)=\sum_{k=0}^{K_t-1}\lambda_k(v_k-x_t)$ where $\lambda_0,\ldots,\lambda_{K_t-1}>0$ and $v_0,\ldots,v_{K_t-1}\in\mathcal{V}$. Then by normalizing by $\Lambda_t\coloneqq\sum_{k=0}^{K_t-1}\lambda_k$, we obtain a \emph{feasible} descent direction $g_t\coloneqq(1/\Lambda_t)\sum_{k=0}^{K_t-1}\lambda_k(v_k-x_t)$ in the sense that $[x_t,x_t+g_t]\subseteq\mathcal{C}$. Therefore, building $x_{t+1}$ as a convex combination of $x_t$ and $x_t+g_t$ ensures that $x_{t+1}\in\mathcal{C}$ and the projection-free property holds as in a typical FW step, all the while moving in the direction of the negative gradient $-\nabla f(x_t)$.

\subsection{Boosting via gradient pursuit}
\label{sec:boosting}

In practice however, computing the exact conical decomposition of $-\nabla f(x_t)$, even when this is feasible, is not necessary and it may be overkill. Indeed, all we want is to find a descent direction $g_t$ using $\mathcal{V}$ that is \emph{better} aligned with $-\nabla f(x_t)$ and we do not mind if $\|-\nabla f(x_t)-g_t\|$ is arbitrarily large. Thus, we propose to chase the direction of $-\nabla f(x_t)$ by sequentially picking up vertices in a matching pursuit-style \citep{mallat93mp}. The procedure is described in Algorithm~\ref{boofw} (Lines~\ref{proc1}-\ref{proc2}). In fact, it implicitly addresses the cone constrained quadratic optimization subproblem
\begin{align}
 \min_{d\in\operatorname{cone}(\mathcal{V}-x_t)}\frac{1}{2}\|-\nabla f(x_t)-d\|^2\label{conicpb}
\end{align}
via the Non-Negative Matching Pursuit algorithm (NNMP) \citep{locatello17conic}, without however the aim of solving it. At each round $k$, the procedure looks to reduce the residual $r_k$ by subtracting its projection $\lambda_ku_k$ onto the principal component $u_k$. The comparison $\langle r_k,v_k-x_t\rangle$ vs.~$\langle r_k,-d_k/\|d_k\|\rangle$ in Line~\ref{boofw:extra} is less intuitive than the rest of the procedure but it is necessary to ensure convergence; see \citet{locatello17conic}. The normalization in Line~\ref{norma} ensures the feasibility of the new iterate $x_{t+1}$.\\

\begin{algorithm}[h]
\caption{Boosted Frank-Wolfe (BoostFW)}
\label{boofw}
\textbf{Input:} Input point $y\in\mathcal{C}$, maximum number of rounds $K\in\mathbb{N}\backslash\{0\}$, alignment improvement tolerance $\delta\in\left]0,1\right[$, step-size strategy $\gamma_t\in\left[0,1\right]$.\\
\textbf{Output:} Point $x_T\in\mathcal{C}$.
\textcolor{white}{yolo}
\begin{algorithmic}[1]
\STATE$x_0\leftarrow\argmin\limits_{v\in\mathcal{V}}\langle\nabla f(y),v\rangle$\label{boofw:x0}
\FOR{$t=0$ \textbf{to} $T-1$}
\STATE$d_0\leftarrow0$\label{proc1}
\STATE$\Lambda_t\leftarrow0$
\STATE$\text{flag}\leftarrow\textbf{false}$
\FOR{$k=0$ \textbf{to} $K-1$}
\STATE$r_k\leftarrow-\nabla f(x_t)-d_k$\hfill$\triangleright${ $k$-th residual}
\STATE$v_k\leftarrow\argmax\limits_{v\in\mathcal{V}}\langle r_k,v\rangle$\hfill$\triangleright${ FW oracle}\label{boofw:v}
\STATE$u_k\leftarrow\argmax\limits_{u\in\{v_k-x_t,-d_k/\|d_k\|\}}\langle r_k,u\rangle$\label{boofw:extra}
\STATE$\lambda_k\leftarrow\displaystyle\frac{\langle r_k,u_k\rangle}{\|u_k\|^2}$\label{boofw:lambda}
\STATE$d_k'\leftarrow d_k+\lambda_ku_k$\label{boofw:dprime}
\IF{$\operatorname{align}(-\nabla f(x_t),d_{k}')-\operatorname{align}(-\nabla f(x_t),d_k)\geq\delta$}\label{criterion}
\STATE$d_{k+1}\leftarrow d_k'$
\STATE$\Lambda_t\leftarrow\begin{cases}\Lambda_t+\lambda_k&\textbf{if }u_k=v_k-x_t\\\Lambda_t(1-\lambda_k/\|d_k\|)&\textbf{if }u_k=-d_k/\|d_k\|\end{cases}$\label{boofw:Lbd}
\ELSE\label{else}
\STATE$\text{flag}\leftarrow\textbf{true}$
\STATE\textbf{break}\hfill$\triangleright${ exit $k$-loop}\label{break}
\ENDIF
\ENDFOR\label{proc2}
\STATE$K_t\leftarrow k$ \textbf{if} $\text{flag}=\textbf{true}$ \textbf{else} $K$\label{boofw:kt}
\STATE$g_t\leftarrow d_{K_t}/\Lambda_t$\hfill$\triangleright${ normalization}\label{norma}
\STATE$x_{t+1}\leftarrow x_t+\gamma_tg_t$\label{update}
\ENDFOR
\end{algorithmic}
\end{algorithm}

Since we are only interested in the direction of $-\nabla f(x_t)$, the stopping criterion in the procedure (Line~\ref{criterion}) is an alignment condition between $-\nabla f(x_t)$ and the current estimated direction $d_k$, which serves as descent direction for BoostFW. The function $\operatorname{align}$, defined in~\eqref{angle}, measures the alignment between a target direction $d\in\mathcal{H}\backslash\{0\}$ and its estimate $\hat{d}\in\mathcal{H}$. It is invariant by scaling of $d$ or $\hat{d}$, and the higher the value, the better the alignment:
\begin{align}
 \operatorname{align}(d,\hat{d})\coloneqq
 \begin{cases}
  \displaystyle\frac{\langle d,\hat{d}\rangle}{\|d\|\|\hat{d}\|}&\text{if }\hat{d}\neq0\\
  -1&\text{if }\hat{d}=0.
 \end{cases}\label{angle}
\end{align}
In order to optimize the trade-off between progress and complexity per iteration, we allow for (very) inexact alignments and we stop the procedure as soon as \emph{sufficient} progress is not met (Lines~\ref{else}-\ref{break}). Furthermore, note that it is not possible to obtain a perfect alignment when $-\nabla f(x_t)\notin\operatorname{cone}(\mathcal{V}-x_t)$, but this is not an issue as we only seek to better align the descent direction. The number of pursuit rounds at iteration $t$ is denoted by $K_t$ (Line~\ref{boofw:kt}). In the experiments (Section~\ref{sec:exp}), we typically set $\delta\leftarrow10^{-3}$ and $K\leftarrow+\infty$; the role of $K$ is only to cap the number of pursuit rounds per iteration when the FW oracle is particularly expensive (see Section~\ref{sec:traffic}). Note that if $K=1$ then BoostFW reduces to FW.\\

In the case of Figures~\ref{zigzag}-\ref{zigzag_away}, BoostFW exactly estimates the direction of $-\nabla f(x_0)=-(x_0-x^*)$ in only two rounds and converges in $1$ iteration. A more general illustration of the procedure is presented in Figure~\ref{fig:procedure}. See also Appendix~\ref{apx:angle-improvs} for an illustration of the improvements in alignment of $d_k$ during the procedure. Lastly, note that BoostFW does not need to maintain the decomposition of the iterates, which is very favorable in practice \citep{garber16dicg}.\\

\begin{figure}[h]
 \begin{center}
 \begin{tikzpicture}[thick, scale=1.3]
  \coordinate (a) at (-1.2, 1);
  \coordinate (b) at (-0.2, 2);
  \coordinate (c) at (2, 0.8);
  \coordinate (d) at (1.3, -0.8);
  \coordinate (e) at (-0.8, -0.5);
  \coordinate (f) at (-1.2, 1);
  \coordinate (x) at (0.6, 0.6);
  \coordinate (n) at (1.5, 1.5);
  \coordinate (g) at (1.082, 1.172);
  \coordinate (r1s) at (1.608, 0.744);
  \coordinate (r1e) at (1.5, 1.5);
  \coordinate (l0s) at (0.6, 0.6);
  \coordinate (l0e) at (1.608, 0.744);
  \coordinate (r1bs) at (0.6, 0.6);
  \coordinate (r1be) at (0.492, 1.356);
  \coordinate (r2s) at (0.247, 1.216);
  \coordinate (r2e) at (0.492, 1.356);
  \coordinate (l1s) at (0.6, 0.6);
  \coordinate (l1e) at (0.247, 1.216);
  
  \node at (2.1, -0.7) {\emph{(a)}};
  
  \draw[dotted][purple] (l0e) -- (c);
  
  \draw (a) -- (b) -- (c) -- (d) -- (e) -- (f) -- (a);
  
  \draw[->][purple] (r1s) -- (r1e) node [midway, right] {\textcolor{purple}{$r_1$}};
  \draw[->][purple] (l0s) -- (l0e) node [below] {\textcolor{purple}{$\lambda_0u_0\quad\;$}};
  \draw[->] (x) -- (n);
  
  \draw[fill=black] (c) circle (1pt) node [right] {$v_0$};
  \draw[fill=black] (x) circle (1pt) node [below] {$x_t$};
  \draw (n) node [above] {$\quad-\nabla f(x_t)=r_0$};
 \end{tikzpicture}\hspace{15mm}
  \begin{tikzpicture}[thick, scale=1.3]
  \coordinate (a) at (-1.2, 1);
  \coordinate (b) at (-0.2, 2);
  \coordinate (c) at (2, 0.8);
  \coordinate (d) at (1.3, -0.8);
  \coordinate (e) at (-0.8, -0.5);
  \coordinate (f) at (-1.2, 1);
  \coordinate (x) at (0.6, 0.6);
  \coordinate (n) at (1.5, 1.5);
  \coordinate (g) at (1.082, 1.172);
  \coordinate (r1s) at (1.608, 0.744);
  \coordinate (r1e) at (1.5, 1.5);
  \coordinate (l0s) at (0.6, 0.6);
  \coordinate (l0e) at (1.608, 0.744);
  \coordinate (r1bs) at (0.6, 0.6);
  \coordinate (r1be) at (0.492, 1.356);
  \coordinate (r2s) at (0.247, 1.216);
  \coordinate (r2e) at (0.492, 1.356);
  \coordinate (l1s) at (0.6, 0.6);
  \coordinate (l1e) at (0.247, 1.216);
  
  \node at (2.1, -0.7) {\emph{(b)}};
  
  \draw[dotted][teal] (l1e) -- (b);
  \draw[dotted][purple] (l0e) -- (c);
  
  \draw (a) -- (b) -- (c) -- (d) -- (e) -- (f) -- (a);
  
  \draw[->][purple] (l0s) -- (l0e) node [below] {\textcolor{purple}{$\lambda_0u_0\quad\;$}};
  \draw[->][purple] (r1bs) -- (r1be) node [midway, right] {\textcolor{purple}{$r_1$}};
  \draw[->][teal] (r2s) -- (r2e) node [midway, above] {\textcolor{teal}{$r_2$}};
  \draw[->][teal] (l1s) -- (l1e) node [midway, left] {\textcolor{teal}{$\lambda_1u_1$}};
  
  \draw[fill=black] (b) circle (1pt) node [above] {$v_1$};
  \draw[fill=black] (c) circle (1pt) node [right] {$v_0$};
  \draw[fill=black] (x) circle (1pt) node [below] {$x_t$};
 \end{tikzpicture}
 
 \vspace{5mm}
 
 \begin{tikzpicture}[thick, scale=1.3]
  \coordinate (a) at (-1.2, 1);
  \coordinate (b) at (-0.2, 2);
  \coordinate (c) at (2, 0.8);
  \coordinate (d) at (1.3, -0.8);
  \coordinate (e) at (-0.8, -0.5);
  \coordinate (f) at (-1.2, 1);
  \coordinate (x) at (0.6, 0.6);
  \coordinate (n) at (1.5, 1.5);
  \coordinate (g) at (1.082, 1.172);
  \coordinate (r1s) at (1.608, 0.744);
  \coordinate (r1e) at (1.5, 1.5);
  \coordinate (l0s) at (0.6, 0.6);
  \coordinate (l0e) at (1.608, 0.744);
  \coordinate (r1bs) at (0.6, 0.6);
  \coordinate (r1be) at (0.492, 1.356);
  \coordinate (r2s) at (0.247, 1.216);
  \coordinate (r2e) at (0.492, 1.356);
  \coordinate (l1s) at (0.6, 0.6);
  \coordinate (l1e) at (0.247, 1.216);
  \coordinate (d2) at (1.255, 1.36);
  
  \node at (2.1, -0.7) {\emph{(c)}};
  
  \draw[dotted][teal] (l1e) -- (b);
  \draw[dotted][purple] (l0e) -- (c);
  \draw[dotted][purple] (l0e) -- (d2);
  \draw[dotted][teal] (l1e) -- (d2);
  
  \draw (a) -- (b) -- (c) -- (d) -- (e) -- (f) -- (a);
  
  \draw[->][purple] (l0s) -- (l0e) node [below] {\textcolor{purple}{$\lambda_0u_0\quad\;$}};
  \draw[->][teal] (l1s) -- (l1e) node [midway, left] {\textcolor{teal}{$\lambda_1u_1$}};
  \draw[->][orange] (x) -- (d2) node [above] {\textcolor{orange}{$d_2$}};
  
  \draw[fill=black] (b) circle (1pt) node [above] {$v_1$};
  \draw[fill=black] (c) circle (1pt) node [right] {$v_0$};
  \draw[fill=black] (x) circle (1pt) node [below] {$x_t$};
 \end{tikzpicture}\hspace{15mm}
  \begin{tikzpicture}[thick, scale=1.3]
  \coordinate (a) at (-1.2, 1);
  \coordinate (b) at (-0.2, 2);
  \coordinate (c) at (2, 0.8);
  \coordinate (d) at (1.3, -0.8);
  \coordinate (e) at (-0.8, -0.5);
  \coordinate (f) at (-1.2, 1);
  \coordinate (x) at (0.6, 0.6);
  \coordinate (n) at (1.5, 1.5);
  \coordinate (g) at (1.082, 1.172);
  \coordinate (r1s) at (1.608, 0.744);
  \coordinate (r1e) at (1.5, 1.5);
  \coordinate (l0s) at (0.6, 0.6);
  \coordinate (l0e) at (1.608, 0.744);
  \coordinate (r1bs) at (0.6, 0.6);
  \coordinate (r1be) at (0.492, 1.356);
  \coordinate (r2s) at (0.247, 1.216);
  \coordinate (r2e) at (0.492, 1.356);
  \coordinate (l1s) at (0.6, 0.6);
  \coordinate (l1e) at (0.247, 1.216);
  
  \node at (2.1, -0.7) {\emph{(d)}};
  
  \draw[dotted][purple] (x) -- (c);
  
  \draw (a) -- (b) -- (c) -- (d) -- (e) -- (f) -- (a);
  
  \draw[->] (x) -- (n);
  \draw[->][orange] (x) -- (d2) node [above] {\textcolor{orange}{$d_2\quad$}};
  \draw[->][blue] (x) -- (g) node [left] {\textcolor{blue}{$g_t$}};
  
  \draw[fill=black] (c) circle (1pt) node [right] {$v_0$};
  \draw[fill=black] (x) circle (1pt) node [below] {$x_t$};
  \draw (n) node [right] {$-\nabla f(x_t)$};
 \end{tikzpicture}
 \end{center}
 \caption{The gradient pursuit procedure builds a descent direction $g_t$ better aligned with the negative gradient direction $-\nabla f(x_t)$, while the FW descent direction is that of $v_0-x_t$. We have $g_t=d_2/(\lambda_0+\lambda_1)$ where $d_2=\lambda_0u_0+\lambda_1u_1$, $u_0=v_0-x_t$, and $u_1=v_1-x_t$. Furthermore, note that $[x_t,x_t+d_2]\not\subseteq\mathcal{C}$ but $[x_t,x_t+g_t]\subseteq\mathcal{C}$. Moving along the segment $[x_t,x_t+g_t]$ ensures feasibility of the new iterate $x_{t+1}$.}
 \label{fig:procedure}
\end{figure}
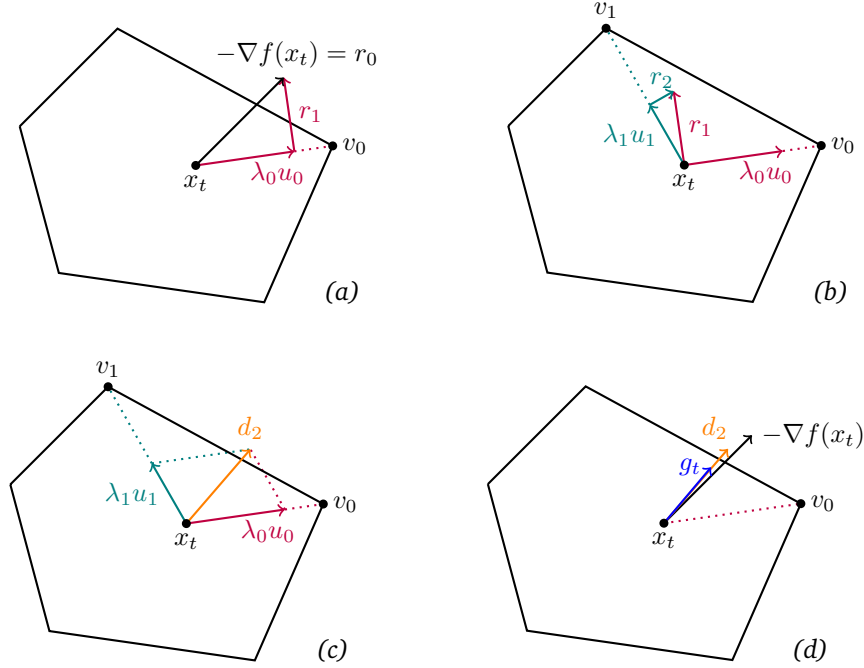

We present in Proposition~\ref{prop} some properties satisfied by BoostFW (Algorithm~\ref{boofw}). Proofs are available in Appendix~\ref{apx:boosting}.

\begin{proposition}
 \label{prop}
 For all $t\in\llbracket0,T-1\rrbracket$,
 \begin{enumerate}[label=(\roman*)]
  \item $d_{1}$ is defined and $K_t\geq1$,
  \item $\lambda_0,\ldots,\lambda_{K_t-1}\geq0$,
  \item $d_k\in\operatorname{cone}(\mathcal{V}-x_t)$ for all $k\in\llbracket0,K_t\rrbracket$,
  \item\label{prop:gx} $x_t+g_t\in\mathcal{C}$ and $x_{t+1}\in\mathcal{C}$,
  \item\label{prop:eta} $\operatorname{align}(-\nabla f(x_t),g_t)\geq\operatorname{align}(-\nabla f(x_t),v_t-x_t)+(K_t-1)\delta$ where $v_t\in\argmin_{v\in\mathcal{V}}\langle\nabla f(x_t),v\rangle$ and $\operatorname{align}(-\nabla f(x_t),v_t-x_t)\geq0$.
 \end{enumerate}
\end{proposition}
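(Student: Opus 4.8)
The plan is to verify the five items in order, handling the feasibility claims (iii) and (iv) together by a single induction on the pursuit round $k$ that simultaneously tracks a conical representation of $d_k$ and the scalar $\Lambda_t$. Throughout I assume $x_t$ is not already a minimizer of $f$ over $\mathcal{C}$, so that the Frank-Wolfe gap is strictly positive (otherwise the problem is solved at $x_t$).

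For \emph{(i)} and the last clause of \emph{(v)}, I would first examine round $k=0$. Since $d_0=0$, the candidate $-d_0/\|d_0\|$ is undefined, so necessarily $u_0=v_0-x_t$, where $v_0=\argmax_{v\in\mathcal{V}}\langle-\nabla f(x_t),v\rangle=v_t$. Because $x_t\in\mathcal{C}=\operatorname{conv}(\mathcal{V})$ is a convex combination of vertices and $v_0$ maximizes $\langle-\nabla f(x_t),\cdot\rangle$ over $\mathcal{V}$, one gets $\langle-\nabla f(x_t),v_0\rangle\geq\langle-\nabla f(x_t),x_t\rangle$, hence $\langle-\nabla f(x_t),v_t-x_t\rangle\geq0$ and $\operatorname{align}(-\nabla f(x_t),v_t-x_t)\geq0$, which is the last clause of (v). Positivity of the gap gives $\lambda_0>0$ and $d_1=\lambda_0(v_0-x_t)\neq0$, so by scale invariance $\operatorname{align}(-\nabla f(x_t),d_1)=\operatorname{align}(-\nabla f(x_t),v_t-x_t)\geq0$, while $\operatorname{align}(-\nabla f(x_t),d_0)=-1$. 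The criterion in Line~\ref{criterion} then reads $\operatorname{align}(-\nabla f(x_t),d_1)+1\geq\delta$, which holds since $\delta<1$; thus $d_1$ is defined and $K_t\geq1$. For \emph{(ii)}, at any executed round $k$ the choice $u_k=\argmax$ gives $\langle r_k,u_k\rangle\geq\langle r_k,v_k-x_t\rangle$, and the same vertex-optimality argument applied to $v_k=\argmax_v\langle r_k,v\rangle$ yields $\langle r_k,v_k-x_t\rangle\geq0$; since $\lambda_k=\langle r_k,u_k\rangle/\|u_k\|^2$ with $\|u_k\|>0$, this forces $\lambda_k\geq0$.

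The core is the joint induction for \emph{(iii)} and \emph{(iv)} with invariant: for every $k\in\llbracket0,K_t\rrbracket$, $d_k=\sum_j\mu_j^{(k)}(v_j-x_t)$ with coefficients $\mu_j^{(k)}\geq0$ summing to the current $\Lambda_t$. The base case $d_0=0,\Lambda_t=0$ is immediate. In the step, if $u_k=v_k-x_t$ then $d_{k+1}=d_k+\lambda_k(v_k-x_t)$ raises one coefficient by $\lambda_k\geq0$, matching $\Lambda_t\leftarrow\Lambda_t+\lambda_k$; if $u_k=-d_k/\|d_k\|$ (which forces $k\geq1$ since $d_0=0$), then $d_{k+1}=(1-\lambda_k/\|d_k\|)d_k$, so every coefficient and $\Lambda_t$ scale by the same factor, matching the second branch of Line~\ref{boofw:Lbd}. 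The step I expect to be the main obstacle is showing this factor is nonnegative, equivalently $\langle-\nabla f(x_t),d_k\rangle\geq0$; I would derive it from alignment monotonicity: since $\operatorname{align}(-\nabla f(x_t),d_1)\geq0$ and each passing round raises the alignment by at least $\delta>0$, one has $\operatorname{align}(-\nabla f(x_t),d_k)\geq0$ for all $k\geq1$, i.e. $\langle-\nabla f(x_t),d_k\rangle\geq0$. This closes the induction and gives (iii). For (iv), $\Lambda_t>0$ (it starts at $\lambda_0>0$ and all scaling factors are strictly positive, as $\operatorname{align}(-\nabla f(x_t),d_k)>0$ for $k\geq1$), so $g_t=d_{K_t}/\Lambda_t$ is well-defined and $x_t+g_t=\sum_j(\mu_j^{(K_t)}/\Lambda_t)v_j$ is a convex combination of vertices, hence in $\mathcal{C}$; then $x_{t+1}=(1-\gamma_t)x_t+\gamma_t(x_t+g_t)\in\mathcal{C}$ by convexity.

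Finally, for \emph{(v)}, scale invariance and $\Lambda_t>0$ give $\operatorname{align}(-\nabla f(x_t),g_t)=\operatorname{align}(-\nabla f(x_t),d_{K_t})$. Telescoping the criterion over the passing rounds $k=1,\ldots,K_t-1$ yields $\operatorname{align}(-\nabla f(x_t),d_{K_t})\geq\operatorname{align}(-\nabla f(x_t),d_1)+(K_t-1)\delta$, and substituting the identity $\operatorname{align}(-\nabla f(x_t),d_1)=\operatorname{align}(-\nabla f(x_t),v_t-x_t)$ from the first step delivers the stated inequality (with the empty-sum case $K_t=1$ holding trivially).
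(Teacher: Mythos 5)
Your proof is correct and follows the same overall architecture as the paper's: the same treatment of round $k=0$ for (i) and for the sign of $\operatorname{align}(-\nabla f(x_t),v_t-x_t)$, the same vertex-optimality argument for (ii), an induction on $k$ for (iii), the same sum-of-coefficients argument for (iv), and the same telescoping of the acceptance criterion for (v). The one place where you genuinely diverge is the crux of (iii): showing that the scaling factor $1-\lambda_k/\|d_k\|$ arising when $u_k=-d_k/\|d_k\|$ is nonnegative. The paper proves the stronger bound $1-\lambda_k/\|d_k\|\geq1/2$ by tracking the residuals: since $\|r_{k'+1}\|^2=\|r_{k'}\|^2-\langle r_{k'},u_{k'}\rangle^2/\|u_{k'}\|^2\leq\|r_{k'}\|^2$, one gets $\|\nabla f(x_t)+d_k\|\leq\|\nabla f(x_t)\|$, hence $\langle\nabla f(x_t),d_k\rangle\leq-\|d_k\|^2/2$ and then $\langle r_k,-d_k\rangle\leq\|d_k\|^2/2$. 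You instead use the exact identity $1-\lambda_k/\|d_k\|=\langle-\nabla f(x_t),d_k\rangle/\|d_k\|^2$ (which is correct: with $\|u_k\|=1$ one has $\lambda_k/\|d_k\|=\langle r_k,-d_k\rangle/\|d_k\|^2$ and $\langle r_k,-d_k\rangle=\langle\nabla f(x_t),d_k\rangle+\|d_k\|^2$) and deduce nonnegativity purely from the algorithm's control flow: every accepted $d_k$ with $k\geq1$ satisfies $\operatorname{align}(-\nabla f(x_t),d_k)\geq\operatorname{align}(-\nabla f(x_t),d_1)+(k-1)\delta\geq0$. Your route is shorter and avoids the residual-norm computation, at the cost of losing the quantitative constant $1/2$, which is not needed for the proposition but is the natural byproduct if one wants to connect the procedure to the analysis of the subproblem~\eqref{conicpb}. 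A further minor difference in your favor: you make explicit the standing assumption that the Frank-Wolfe gap at $x_t$ is positive, so that $\lambda_0>0$, $d_1\neq0$, and $\Lambda_t>0$; the paper leaves this degenerate case implicit (remarking only that $\lambda_k=0$ means the pursuit has already converged).
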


\subsection{Convergence analysis}
\label{sec:cv}

We denote by $\eta_t\coloneqq\operatorname{align}(-\nabla f(x_t),g_t)$. We provide in Theorem~\ref{th:boofwsc} the general convergence rate of BoostFW. All proofs are available in Appendix~\ref{apx:cvproofs}. Note that $\eta_t\|\nabla f(x_t)\|/(L\|g_t\|)=\langle-\nabla f(x_t),g_t\rangle/(L\|g_t\|^2)$ corresponds to the short step strategy.

\begin{theorem}[(Universal rate)]
 \label{th:boofwsc}
 Let $f:\mathcal{H}\rightarrow\mathbb{R}$ be $L$-smooth, convex, and $\mu$-gradient dominated, and set $\gamma_t\leftarrow\min\{\eta_t\|\nabla f(x_t)\|/(L\|g_t\|),1\}$ or $\gamma_t\leftarrow\argmin_{\gamma\in\left[0,1\right]}f(x_t+\gamma g_t)$. Then for all $t\in\llbracket0,T\rrbracket$,
 \begin{align*}
  f(x_t)-\min_\mathcal{C}f
  \leq\frac{LD^2}{2}\prod_{s=0}^{t-1}\left(1-\eta_s^2\frac{\mu}{L}\right)^{\mathds{1}_{\{\gamma_s<1\}}}\left(1-\frac{\|g_s\|}{2\|v_s-x_s\|}\right)^{\mathds{1}_{\{\gamma_s=1\}}}
 \end{align*}
 where $v_s\in\argmin_{v\in\mathcal{V}}\langle\nabla f(x_s),v\rangle$ for all $s\in\llbracket0,T-1\rrbracket$.
\end{theorem}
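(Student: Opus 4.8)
The plan is to establish a one-step contraction of the primal gap $\epsilon_t\coloneqq f(x_t)-\min_\mathcal{C}f$ and then telescope. The starting point is $L$-smoothness applied to the update $x_{t+1}=x_t+\gamma_tg_t$:
\[
 f(x_{t+1})\leq f(x_t)+\gamma_t\langle\nabla f(x_t),g_t\rangle+\frac{L\gamma_t^2}{2}\|g_t\|^2.
\]
Since $\eta_t=\operatorname{align}(-\nabla f(x_t),g_t)\geq0$ by Proposition~\ref{prop}\ref{prop:eta}, we have $\langle\nabla f(x_t),g_t\rangle=-\eta_t\|\nabla f(x_t)\|\|g_t\|$, so the right-hand side is a quadratic in $\gamma_t$ minimized over $\mathbb{R}$ at $\gamma_t^\star=\eta_t\|\nabla f(x_t)\|/(L\|g_t\|)$, i.e.\ exactly the short step before truncation. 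I would then split according to whether this step is interior or truncated to $1$.

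\emph{Interior step ($\gamma_t<1$).} Here $\gamma_t=\gamma_t^\star$, and substituting back collapses the quadratic to $\epsilon_{t+1}\leq\epsilon_t-\eta_t^2\|\nabla f(x_t)\|^2/(2L)$. The $\mu$-gradient-dominated property, combined with $\min_\mathcal{H}f\leq\min_\mathcal{C}f$, gives $\|\nabla f(x_t)\|^2\geq2\mu\,\epsilon_t$, and substituting yields the first factor $\epsilon_{t+1}\leq\epsilon_t(1-\eta_t^2\mu/L)$.

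\emph{Full step ($\gamma_t=1$).} Now $\gamma_t^\star\geq1$, i.e.\ $L\|g_t\|\leq\eta_t\|\nabla f(x_t)\|$. Setting $\gamma_t=1$ and using this to bound the quadratic term by $\tfrac{L}{2}\|g_t\|^2\leq\tfrac12\eta_t\|\nabla f(x_t)\|\|g_t\|$ gives $\epsilon_{t+1}\leq\epsilon_t-\tfrac12\eta_t\|\nabla f(x_t)\|\|g_t\|$. To reshape this into the stated factor, I would lower bound $\eta_t\|\nabla f(x_t)\|\|v_t-x_t\|$ by $\epsilon_t$: Proposition~\ref{prop}\ref{prop:eta} gives $\eta_t\geq\operatorname{align}(-\nabla f(x_t),v_t-x_t)$, whence $\eta_t\|\nabla f(x_t)\|\|v_t-x_t\|\geq\langle\nabla f(x_t),x_t-v_t\rangle$, and the latter FW gap is $\geq\epsilon_t$ by convexity (as $v_t$ minimizes $\langle\nabla f(x_t),\cdot\rangle$ over $\mathcal{C}\ni x^*$). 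Multiplying through by $\|g_t\|/(2\|v_t-x_t\|)\geq0$ then produces the second factor $\epsilon_{t+1}\leq\epsilon_t\big(1-\|g_t\|/(2\|v_t-x_t\|)\big)$.

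Finally I would bound the initial gap and telescope. Applying smoothness to the single move $y\mapsto x_0$ with $x_0$ the linear minimizer at $y$, and using $\langle\nabla f(y),x_0-y\rangle\leq\langle\nabla f(y),x^*-y\rangle\leq f(x^*)-f(y)$, gives $\epsilon_0\leq\tfrac{L}{2}\|x_0-y\|^2\leq LD^2/2$. Both per-step factors lie in $[0,1]$ — the first because $\eta_t\in[0,1]$ and $\mu\leq L$, the second trivially $\leq1$ and forced $\geq0$ since $\epsilon_{t+1}\geq0$ — so a straightforward induction multiplies the factors to give the claimed product bound. For the line-search variant I would note that exact line search decreases $f$ by at least as much as any fixed step, in particular the short step, so the same per-step estimates carry over. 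I expect the main obstacle to be the full-step case: coaxing the raw smoothness decrease into precisely the form $1-\|g_t\|/(2\|v_t-x_t\|)$ requires chaining the full-step condition, the magnitude of $g_t$, Proposition~\ref{prop}\ref{prop:eta}, and the FW gap in exactly the right order, and separately checking that the line-search case is consistent with the indicator on $\gamma_s$.
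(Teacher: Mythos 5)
Your proposal is correct and follows essentially the same route as the paper's proof: smoothness plus the collapsed short-step quadratic and gradient domination for the interior case, the alignment monotonicity from Proposition~\ref{prop}\ref{prop:eta} combined with the Frank-Wolfe gap bound $\langle\nabla f(x_t),x_t-v_t\rangle\geq\epsilon_t$ for the full-step case, the same $\epsilon_0\leq LD^2/2$ initialization via the linear minimizer at $y$, and the same "line search does at least as well" reduction. The full-step manipulation you flag as the main obstacle is exactly the chain the paper uses, so no gap remains.
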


Strictly speaking, the rate in Theorem~\ref{th:boofwsc} is not \emph{explicit} although it still provides a quantitative estimation. Note that $\gamma_t=1$ is extremely rare in practice, and we observed no more than $1$ such iteration in each of the experiments (Section~\ref{sec:exp}). This is a similar phenomenon to that in the Away-Step and Pairwise Frank-Wolfe algorithms \citep{lacoste15linear}. Similarly, $K_t>1$ simply means that it is possible to increase the alignment by $\delta$ twice and consecutively, where $\delta$ is typically set to a low value. In the experiments, we set $\delta\leftarrow10^{-3}$ and we observed $K_t>1$ (or even $K_t>5$) almost everytime.\\

For completeness, we disregard these observations and address in Theorem~\ref{th:1/t} the case where the number of iterations with $\gamma_t<1$ and $K_t>1$ is not dominant, and we add a minor adjustment to Algorithm~\ref{boofw}: if $\gamma_t=1$ then we choose to do a simple FW step, i.e., to move in the direction of $v_{k=0}-x_t$ instead of the direction of $g_t$, where $v_{k=0}$ is computed in the first round of the procedure (Line~\ref{boofw:v}). Although this usually provides less progress, we do it for the sole purpose of presenting a \emph{fully explicit} convergence rate; again, there is no need for such tweaks in practice as typically almost every iteration satisfies $\gamma_t<1$ and $K_t>1$. Theorem~\ref{th:1/t} states the convergence rate for this scenario, which is very loose as it accommodates for these FW steps.

\begin{theorem}[(Worst-case rate)]
 \label{th:1/t}
 Let $f:\mathcal{H}\rightarrow\mathbb{R}$ be $L$-smooth, convex, and $\mu$-gradient dominated, and set $\gamma_t\leftarrow\min\{\eta_t\|\nabla f(x_t)\|/(L\|g_t\|),1\}$ or $\gamma_t\leftarrow\argmin_{\gamma\in\left[0,1\right]}f(x_t+\gamma g_t)$. Consider Algorithm~\ref{boofw} with the minor adjustment $x_{t+1}\leftarrow x_t+\gamma_t'(v_t-x_t)$ in Line~\ref{update} when $\gamma_t=1$, where $v_t\leftarrow v_{k=0}$ is computed in Line~\ref{boofw:v} and $\gamma_t'\leftarrow\min\{\langle\nabla f(x_t),x_t-v_t\rangle/(L\|x_t-v_t\|^2),1\}$ or $\gamma_t'\leftarrow\argmin_{\gamma\in\left[0,1\right]}f(x_t+\gamma(v_t-x_t))$. Then for all $t\in\llbracket0,T\rrbracket$,
 \begin{align*}
  f(x_t)-\min_\mathcal{C}f
  \leq\frac{4LD^2}{t+2}.
 \end{align*}
\end{theorem}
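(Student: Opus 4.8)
The plan is to collapse the whole argument into a single scalar recursion on the primal gap $\epsilon_t\coloneqq f(x_t)-\min_\mathcal{C}f$ and then solve it. First I would record the per-step progress. Whenever $\gamma_t<1$, both step-size rules guarantee, via the $L$-smoothness upper bound (which the line search only improves upon), the short-step decrease $\epsilon_{t+1}\leq\epsilon_t-\langle-\nabla f(x_t),g_t\rangle^2/(2L\|g_t\|^2)$. Since $\langle-\nabla f(x_t),g_t\rangle=\eta_t\|\nabla f(x_t)\|\|g_t\|$ by the definition of $\eta_t=\operatorname{align}(-\nabla f(x_t),g_t)$, the factor $\|g_t\|$ cancels and the decrease becomes exactly $\eta_t^2\|\nabla f(x_t)\|^2/(2L)$. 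Feasibility of the step is not an issue, because $x_t+g_t\in\mathcal{C}$ by item~\ref{prop:gx} of Proposition~\ref{prop}, so $x_t+\gamma_tg_t\in\mathcal{C}$ for $\gamma_t\in\left[0,1\right]$ by convexity.

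The key step is then to show that boosting never hurts the worst-case FW progress. By item~\ref{prop:eta} of Proposition~\ref{prop}, $\eta_t\geq\operatorname{align}(-\nabla f(x_t),v_t-x_t)\geq0$ where $v_t$ is the FW vertex; writing $\alpha_t\coloneqq\operatorname{align}(-\nabla f(x_t),v_t-x_t)$, this yields $\eta_t^2\|\nabla f(x_t)\|^2\geq\alpha_t^2\|\nabla f(x_t)\|^2=\langle\nabla f(x_t),x_t-v_t\rangle^2/\|v_t-x_t\|^2$. Bounding $\|v_t-x_t\|\leq D$ and using that the FW gap dominates the primal gap, namely $\langle\nabla f(x_t),x_t-v_t\rangle\geq\langle\nabla f(x_t),x_t-x^*\rangle\geq\epsilon_t$ (optimality of $v_t$ followed by convexity), I obtain the clean recursion $\epsilon_{t+1}\leq\epsilon_t-\epsilon_t^2/(2LD^2)$ for every iteration with $\gamma_t<1$. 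For the iterations with $\gamma_t=1$, the minor adjustment performs an ordinary short-step/line-search FW step along $v_t-x_t$, and the textbook case analysis recalled in Section~\ref{sec:fw} gives either the same quadratic decrease (when $\gamma_t'<1$) or the geometric decrease $\epsilon_{t+1}\leq\epsilon_t/2$ (when $\gamma_t'=1$).

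It remains to control the start and to solve the recursion. For the base case I would exploit the FW-oracle initialization $x_0=\argmin_{v\in\mathcal{V}}\langle\nabla f(y),v\rangle$: smoothness gives $f(x_0)\leq f(y)+\langle\nabla f(y),x_0-y\rangle+\tfrac{L}{2}\|x_0-y\|^2$, optimality of $x_0$ lets me replace $x_0$ by any $x^*\in\argmin_\mathcal{C}f$ inside the inner product, and convexity collapses $f(y)+\langle\nabla f(y),x^*-y\rangle$ into $f(x^*)$, leaving $\epsilon_0\leq\tfrac{L}{2}\|x_0-y\|^2\leq\tfrac{L}{2}D^2$. Because $\epsilon_0\leq LD^2/2<LD^2$ and the gap is non-increasing, every iterate satisfies $\epsilon_t\leq LD^2$, so the geometric decrease implies the quadratic one and $\epsilon_{t+1}\leq\epsilon_t-\epsilon_t^2/(2LD^2)$ holds at every step. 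Taking reciprocals and using $1/(1-x)\geq1+x$ gives $1/\epsilon_{t+1}\geq1/\epsilon_t+1/(2LD^2)$, which telescopes to $1/\epsilon_t\geq1/\epsilon_0+t/(2LD^2)\geq(t+4)/(2LD^2)$, i.e. $\epsilon_t\leq2LD^2/(t+4)\leq4LD^2/(t+2)$, as claimed (the degenerate case $\epsilon_t=0$ being trivial).

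I expect the main obstacle to be the second paragraph: the entire content of the theorem is that replacing the FW direction by the boosted direction $g_t$ cannot slow down the worst-case rate, and this rests on correctly converting the alignment guarantee of Proposition~\ref{prop} into a comparison of short-step decreases. The cancellation of $\|g_t\|$ together with the reduction $\eta_t\geq\alpha_t\geq0$ are precisely what make the boosted progress provably at least the plain FW progress; once that comparison is secured, the base case via the oracle initialization and the telescoping of $1/\epsilon_t$ are routine.
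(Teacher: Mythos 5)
Your proposal is correct and follows essentially the same route as the paper: the same short-step decrease $\epsilon_{t+1}\leq\epsilon_t-\eta_t^2\|\nabla f(x_t)\|^2/(2L)$, the same reduction $\eta_t\geq\operatorname{align}(-\nabla f(x_t),v_t-x_t)\geq0$ from Proposition~\ref{prop}\ref{prop:eta} combined with $\langle\nabla f(x_t),x_t-v_t\rangle\geq\epsilon_t$ to reach $\epsilon_{t+1}\leq\epsilon_t(1-\epsilon_t/(2LD^2))$, and the same base bound $\epsilon_0\leq LD^2/2$. The only cosmetic difference is that you absorb the $\gamma_t'=1$ halving step into the quadratic recursion (valid since $\epsilon_t\leq LD^2$) and solve it by telescoping reciprocals, whereas the paper runs a two-case induction; your route even gives the marginally sharper $2LD^2/(t+4)$.
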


We now provide in Theorem~\ref{th:boofwgood} the more realistic convergence rate of BoostFW, where $N_t\coloneqq|\{s\in\llbracket0,t-1\rrbracket\mid\gamma_s<1,K_s>1\}|$ is \emph{nonnegligeable}, i.e., $N_t\geq\omega t^p$ for some $\omega>0$ and $p\in\left]0,1\right]$. This is the rate observed in practice, where $N_t\approx t-1$ so $\omega\lesssim1$ and $p=1$ (Section~\ref{sec:exp}).

\begin{theorem}[(Practical rate)]
 \label{th:boofwgood} 
 Let $f:\mathcal{H}\rightarrow\mathbb{R}$ be $L$-smooth, convex, and $\mu$-gradient dominated, and set $\gamma_t\leftarrow\min\{\eta_t\|\nabla f(x_t)\|/(L\|g_t\|),1\}$ or $\gamma_t\leftarrow\argmin_{\gamma\in\left[0,1\right]}f(x_t+\gamma g_t)$. Assume that $|\{s\in\llbracket0,t-1\rrbracket\mid\gamma_s<1,K_s>1\}|\geq\omega t^p$ for all $t\in\llbracket 0,T-1\rrbracket$, for some $\omega>0$ and $p\in\left]0,1\right]$. Then for all $t\in\llbracket0,T\rrbracket$,
 \begin{align*}
  f(x_t)-\min_\mathcal{C}f
  \leq\frac{LD^2}{2}\exp\left(-\delta^2\frac{\mu}{L}\omega t^p\right).
 \end{align*}
\end{theorem}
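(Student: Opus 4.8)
The plan is to derive this rate directly from the universal rate of Theorem~\ref{th:boofwsc} by controlling each factor of the product according to the type of iteration. Writing $q\coloneqq\delta^2\mu/L$, I would split the index set $\llbracket0,t-1\rrbracket$ into the ``good'' iterations $G\coloneqq\{s\mid\gamma_s<1,\ K_s>1\}$ and the rest. On a good iteration the exponent $\mathds{1}_{\{\gamma_s<1\}}$ equals $1$, so the corresponding factor is $1-\eta_s^2\mu/L$, and I would bound $\eta_s$ from below using Proposition~\ref{prop}(v): since $\eta_s=\operatorname{align}(-\nabla f(x_s),g_s)\geq\operatorname{align}(-\nabla f(x_s),v_s-x_s)+(K_s-1)\delta\geq(K_s-1)\delta$ and $K_s\geq2$, we obtain $\eta_s\geq\delta$, hence $1-\eta_s^2\mu/L\leq1-q$.

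The next step is to argue that every remaining factor is bounded by $1$ and lies in $[0,1]$, so that discarding it (replacing it by $1$) only increases the product. For $s\notin G$ with $\gamma_s<1$ the factor is $1-\eta_s^2\mu/L$, which is $\leq1$ because $\eta_s^2\mu/L\geq0$, and $\geq0$ because $\eta_s\leq1$ (it is a cosine) and $\mu\leq L$; for $\gamma_s=1$ the factor $1-\|g_s\|/(2\|v_s-x_s\|)$ is $\leq1$ since the subtracted term is nonnegative. Keeping only the good-iteration factors, Theorem~\ref{th:boofwsc} then yields
\begin{align*}
 f(x_t)-\min_\mathcal{C}f\leq\frac{LD^2}{2}(1-q)^{|G|}.
\end{align*}

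It remains to exploit the count assumption. The inequality $\mu\leq L$ --- which follows by combining $L$-smoothness (giving $\|\nabla f(x)\|^2\leq2L(f(x)-\min_\mathcal{H}f)$) with $\mu$-gradient dominance --- together with $\delta\in\left]0,1\right[$ guarantees $q\in\left]0,1\right[$, so $1-q\in\left]0,1\right[$ and the map $a\mapsto(1-q)^a$ is decreasing. Since $|G|=N_t\geq\omega t^p$, this gives $(1-q)^{|G|}\leq(1-q)^{\omega t^p}$, and finally $1-q\leq e^{-q}$ yields $(1-q)^{\omega t^p}\leq\exp(-q\,\omega t^p)=\exp(-\delta^2(\mu/L)\omega t^p)$, which is exactly the claimed bound.

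I expect no serious obstacle beyond this bookkeeping; the one point demanding care is verifying that the base $1-q$ genuinely lies in $\left]0,1\right[$, so that raising it to the larger exponent $|G|\geq\omega t^p$ (rather than a smaller one) produces a valid upper bound, and that each discarded factor is nonnegative so that replacing it by $1$ does not flip an inequality. Both reduce to the elementary facts $\eta_s\leq1$, $\mu\leq L$, and $\delta<1$, the last two of which are used exactly once each.
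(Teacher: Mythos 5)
Your proposal is correct and follows essentially the same route as the paper: invoke Theorem~\ref{th:boofwsc}, drop all factors except those with $\gamma_s<1$ and $K_s>1$, lower-bound $\eta_s\geq\delta$ on those via Proposition~\ref{prop}\ref{prop:eta}, and finish with $1-q\leq e^{-q}$ and the count assumption $N_t\geq\omega t^p$. The only difference is cosmetic (you apply monotonicity of the exponent before the exponential bound, the paper after), plus your extra care in checking that the discarded factors lie in $[0,1]$, which the paper leaves implicit.
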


\begin{remark}
Note that when $\gamma_t<1$ and $K_t>1$, we have (see proofs in Appendix~\ref{apx:cvproofs})
 \begin{align*}
  \left(f(x_t)-\min_\mathcal{C}f\right)-\left(f(x_{t+1})-\min_\mathcal{C}f\right)
  \geq\delta^2\frac{\|\nabla f(x_t)\|^2}{2L}
 \end{align*}
 so if $N_T\coloneqq|\{t\in\llbracket0,T-1\rrbracket\mid\gamma_t<1,K_t>1\}|$, then
 \begin{align*}
  f(x_0)-\min_\mathcal{C}f\geq\delta^2\frac{\inf_\mathcal{C}\|\nabla f\|^2}{2L}N_T.
 \end{align*}
 Thus, if $\inf_\mathcal{C}\|\nabla f\|>0$ then 
 \begin{align*}
  N_T
  \leq\frac{2L(f(x_0)-\min_\mathcal{C}f)}{\delta^2\inf_\mathcal{C}\|\nabla f\|^2}
  \leq\left(\frac{LD}{\delta\inf_\mathcal{C}\|\nabla f\|}\right)^2
 \end{align*}
 since $f(x_0)-\min_\mathcal{C}f\leq LD^2/2$ (see proofs in Appendix~\ref{apx:cvproofs}). However, the assumption in Theorem~\ref{th:boofwgood} can still hold as convergence is usually achieved within $T$ iterations where
 \begin{align*}
  T=\mathcal{O}\left(\left(\frac{1}{\omega}\left(\frac{LD}{\delta\inf_\mathcal{C}\|\nabla f\|}\right)^2\right)^{1/p}\right)
 \end{align*}
 for some $\omega>0$ and $p\in\left]0,1\right]$. In the experiments for example (Section~\ref{sec:exp}), convergence is always achieved within $\mathcal{O}(10^3)$ iterations. Furthermore, early stopping to increase the generalization error of a model also prevents $T$ from blowing up.
\end{remark}

Lastly, we provide in Corollary~\ref{cor:lmo} a bound on the number of FW oracle calls, i.e., the number of linear minimizations over $\mathcal{C}$, performed to achieve $\epsilon$-convergence. In comparison, FW and AFW respectively require $\mathcal{O}(LD^2/\epsilon)$ and $\mathcal{O}\big((L/S)(D/W)^2\ln(1/\epsilon)\big)$ oracle calls, where $f$ is assumed to be $S$-strongly convex and $\mathcal{C}$ is assumed to be a polytope with pyramidal width $W$ for AFW \citep{lacoste15linear}. It is clear from its design that BoostFW performs more oracle calls per iteration, however it uses them more efficiently and the progress obtained overcomes the cost. This is demonstrated in the experiments (Section~\ref{sec:exp}).

\begin{corollary}
\label{cor:lmo}
 In order to achieve $\epsilon$-convergence, the number of linear minimizations performed over $\mathcal{C}$ is 
 \begin{align*}
 \begin{cases}
  \displaystyle\mathcal{O}\left(\frac{LD^2\min\{K,1/\delta\}}{\epsilon}\right)&\text{in the worst-case scenario}\\
  \displaystyle\mathcal{O}\left(\min\left\{K,\frac{1}{\delta}\right\}\left(\frac{1}{\omega\delta^{2}}\frac{L}{\mu}\ln\left(\frac{1}{\epsilon}\right)\right)^{1/p}\right)&\text{in the practical scenario}.
  \end{cases}
 \end{align*}
 Note that the practical scenario assumes that we have set $K\geq2$ in BoostFW ($K=1$ reduces BoostFW to FW).
\end{corollary}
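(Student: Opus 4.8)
The plan is to multiply a uniform per-iteration bound on the number of FW oracle calls by the iteration complexity needed to reach an $\epsilon$-accurate solution, the latter being read directly off Theorems~\ref{th:1/t} and~\ref{th:boofwgood}. Since each pursuit round issues exactly one linear minimization (Line~\ref{boofw:v}) and the procedure performs $K_t$ rounds at iteration $t$, the total number of linear minimizations over $\mathcal{C}$ is $\sum_{s}K_s$ (plus the single call in Line~\ref{boofw:x0}, which is negligible), and I would bound this by $T\cdot\max_s K_s$.

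First I would bound $K_t$ uniformly. The inner loop runs $k$ from $0$ to $K-1$, so $K_t\leq K$ trivially. For the $\delta$-dependent bound I invoke Proposition~\ref{prop}\ref{prop:eta}, which gives $\eta_t\geq\operatorname{align}(-\nabla f(x_t),v_t-x_t)+(K_t-1)\delta$ with $\operatorname{align}(-\nabla f(x_t),v_t-x_t)\geq0$; since $\eta_t=\operatorname{align}(-\nabla f(x_t),g_t)\leq1$, this forces $(K_t-1)\delta\leq1$, i.e.\ $K_t\leq1+1/\delta$. Hence $K_t\leq\min\{K,1+1/\delta\}=\mathcal{O}(\min\{K,1/\delta\})$ for every $t$.

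Next I would convert the primal-gap guarantees into iteration counts. In the worst-case scenario, Theorem~\ref{th:1/t} gives $f(x_t)-\min_\mathcal{C}f\leq4LD^2/(t+2)$, so $\epsilon$-convergence needs $T=\mathcal{O}(LD^2/\epsilon)$ iterations; multiplying by the per-iteration cost yields $\mathcal{O}(LD^2\min\{K,1/\delta\}/\epsilon)$. In the practical scenario, Theorem~\ref{th:boofwgood} gives $f(x_t)-\min_\mathcal{C}f\leq(LD^2/2)\exp(-\delta^2(\mu/L)\omega t^p)$; setting the right-hand side to $\epsilon$ and solving gives $t^p\geq(L/(\delta^2\mu\omega))\ln(LD^2/(2\epsilon))$, so $T=\mathcal{O}((\frac{1}{\omega\delta^2}\frac{L}{\mu}\ln(1/\epsilon))^{1/p})$ after absorbing the constant $LD^2/2$ into the logarithm (valid since $\ln(LD^2/(2\epsilon))=\mathcal{O}(\ln(1/\epsilon))$ as $\epsilon\to0$). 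Multiplying by the per-iteration cost gives the claimed bound, where the hypothesis $K\geq2$ is what makes $K_s>1$ attainable so that Theorem~\ref{th:boofwgood} applies nonvacuously.

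I expect the only delicate step to be the uniform per-iteration oracle bound, which rests entirely on Proposition~\ref{prop}\ref{prop:eta} combined with $\operatorname{align}\leq1$; everything else is a routine inversion of the two rates and an $\mathcal{O}$-absorption of constants. One small bookkeeping point in the worst case is that the $\gamma_t=1$ adjustment of Theorem~\ref{th:1/t} still executes the full pursuit procedure before reverting to a plain FW step, so the per-iteration oracle cost remains bounded by $\min\{K,1+1/\delta\}$ and the product bound is unaffected.
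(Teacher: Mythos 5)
Your proposal is correct and follows essentially the same route as the paper: bound $K_t\leq\min\{K,1+1/\delta\}$ via Proposition~\ref{prop}\ref{prop:eta} together with $\eta_t\leq1$, then multiply by the iteration counts obtained by inverting the rates of Theorems~\ref{th:1/t} and~\ref{th:boofwgood}. The extra bookkeeping you supply (absorbing $\ln(LD^2/2)$ into $\ln(1/\epsilon)$ and noting the pursuit loop still runs when $\gamma_t=1$) is consistent with, and slightly more explicit than, the paper's argument.
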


\section{Computational experiments}
\label{sec:exp}

We compared the Boosted Frank-Wolfe algorithm (BoostFW, Algorithm~\ref{boofw}) to the Away-Step Frank-Wolfe algorithm (AFW) \citep{wolfe70}, the Decomposition-Invariant Pairwise Conditional Gradient algorithm (DICG) \citep{garber16dicg}, and the Blended Conditional Gradients algorithm (BCG) \citep{pok18bcg} in a series of computational experiments. We ran two strategies for AFW, one with the default line search (AFW-ls) and one using the smoothness of $f$ (AFW-L):
\begin{align*}
 \gamma_t\leftarrow
 \begin{cases}
  \displaystyle\min\left\{\frac{\langle\nabla f(x_t),x_t-v_t^\text{FW}\rangle}{L\|x_t-v_t^\text{FW}\|_2^2},1\right\}&\text{if FW step}\\
  \displaystyle\min\left\{\frac{\langle\nabla f(x_t),v_t^\text{away}-x_t\rangle}{L\|v_t^\text{away}-x_t\|_2^2},\gamma_{\max}\right\}&\text{if away step}
 \end{cases}
\end{align*}
where $\gamma_{\max}$ is defined in the algorithm (see Algorithm~\ref{afw} in Appendix~\ref{apx:afw}). Contrary to common belief, both strategies yield the same linear convergence rate; see \citet{lacoste15linear} for AFW-ls and Theorem~\ref{th:afw} in the Appendix for AFW-L \citep{pedregosa20}. For BoostFW, we also ran a line search strategy to demonstrate that the speed-up really comes from the boosting procedure and not from being line search-free. Results further show that the line search-free strategy $\gamma_t\leftarrow\min\{\eta_t\|\nabla f(x_t)\|/(L\|g_t\|),1\}=\min\{\langle-\nabla f(x_t),g_t\rangle/(L\|g_t\|^2),1\}$ is very performant in CPU time. The line search-free strategy of DICG was not competitive in the experiments.\\

DICG is not applicable to optimization problems over the $\ell_1$-ball
\begin{align}
 \min_{x\in\mathbb{R}^n}\;&f(x)\label{pb:n}\\
 \text{s.t.}\;&\|x\|_1\leq\tau,\nonumber
\end{align}
however we can perform a change of variables $x_i=z_i-z_{n+i}$ and use the following reformulation over the simplex:
\begin{align}
 \min_{z\in\mathbb{R}^{2n}}\;&f([z]_{1:n}-[z]_{n+1:2n})\label{pb:2n}\\
 \text{s.t.}\;&z\in\tau\Delta_{2n}\nonumber
\end{align}
where $[z]_{1:n}$ and $[z]_{n+1:2n}$ denote the truncation to $\mathbb{R}^n$ of the first $n$ entries and the last $n$ entries of $z\in\mathbb{R}^{2n}$ respectively. Fact~\ref{fact:2n} formally states the equivalence between problems~\eqref{pb:n} and~\eqref{pb:2n}. A proof can be found in Appendix~\ref{apx:expproofs}.

\begin{fact}
 \label{fact:2n}
 Consider $\mathbb{R}^n$ and let $\tau>0$. Then $\mathcal{B}_1(\tau)=\{[z]_{1:n}-[z]_{n+1:2n}\mid z\in\tau\Delta_{2n}\}$.
\end{fact}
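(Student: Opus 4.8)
The plan is to prove the set equality by establishing the two inclusions separately, using the fact that $\tau\Delta_{2n}=\{z\in\mathbb{R}^{2n}\mid 1^\top z=\tau,\ z\geq0\}$ (the simplex scaled by $\tau$), while $\mathcal{B}_1(\tau)=\{x\in\mathbb{R}^n\mid\|x\|_1\leq\tau\}$.

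For the inclusion ``$\supseteq$'', I would take an arbitrary $z\in\tau\Delta_{2n}$ and set $x\coloneqq[z]_{1:n}-[z]_{n+1:2n}$. Since every entry of $z$ is nonnegative, for each coordinate $i\in\llbracket1,n\rrbracket$ we have $|[x]_i|=|[z]_i-[z]_{n+i}|\leq[z]_i+[z]_{n+i}$. Summing over $i$ then gives $\|x\|_1\leq\sum_{i=1}^n([z]_i+[z]_{n+i})=\sum_{j=1}^{2n}[z]_j=\tau$, so $x\in\mathcal{B}_1(\tau)$. This direction is immediate from the triangle inequality and the equality constraint defining the simplex.

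For the reverse inclusion ``$\subseteq$'', I would take $x\in\mathcal{B}_1(\tau)$ and exhibit a preimage in $\tau\Delta_{2n}$ via the positive/negative part splitting: set $[z]_i\coloneqq\max\{[x]_i,0\}$ and $[z]_{n+i}\coloneqq\max\{-[x]_i,0\}$ for $i\in\llbracket1,n\rrbracket$. Then $z\geq0$, the difference recovers $[z]_i-[z]_{n+i}=[x]_i$, and the sum equals $\sum_{j=1}^{2n}[z]_j=\sum_{i=1}^n|[x]_i|=\|x\|_1$. The only thing left to arrange is the \emph{equality} $1^\top z=\tau$ required by the simplex: since $\|x\|_1\leq\tau$, the slack $s\coloneqq\tau-\|x\|_1$ is nonnegative, and I would absorb it into a single complementary pair, e.g. replacing $[z]_1$ by $[z]_1+s/2$ and $[z]_{n+1}$ by $[z]_{n+1}+s/2$. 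This raises the total mass to $\tau$ while keeping $z\geq0$ and leaving the difference $[z]_1-[z]_{n+1}=[x]_1$ (and hence $x$) unchanged, so the adjusted $z$ lies in $\tau\Delta_{2n}$ and maps onto $x$.

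I expect no real obstacle here; the only subtlety is that the naive lift lands on the \emph{face} $\{z\geq0,\ 1^\top z=\|x\|_1\}$ rather than on $\tau\Delta_{2n}$ itself, and the equality constraint must be matched exactly. This is resolved precisely by the sign condition $\|x\|_1\leq\tau$, which guarantees a nonnegative slack that can be split across a complementary coordinate pair without disturbing the difference. Both inclusions together yield the claimed identity.
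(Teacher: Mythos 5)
Your proof is correct and follows essentially the same route as the paper: the easy inclusion via the triangle inequality, and the other via a positive/negative-part lift whose total mass is padded up to $\tau$ using the nonnegative slack $\tau-\|x\|_1$. The only (immaterial) difference is that the paper spreads the slack uniformly as $\delta=(\tau-\|x\|_1)/(2n)$ over all $2n$ coordinates, whereas you concentrate it on a single complementary pair; both preserve nonnegativity and the difference $[z]_{1:n}-[z]_{n+1:2n}=x$.
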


We implemented all the algorithms in Python using the same code framework for fair comparisons. In the case of synthetic data, we generated them from Gaussian distributions. We ran the experiments on a laptop under Linux Ubuntu 18.04 with Intel Core i7 3.5GHz CPU and 8GB RAM. Code is available at \url{https://github.com/cyrillewcombettes/boostfw}. In each experiment, we estimated the smoothness constant $L$ of the (convex) objective function $f:\mathbb{R}^n\rightarrow\mathbb{R}$, i.e., the Lipschitz constant of the gradient function $\nabla f:\mathbb{R}^n\rightarrow\mathbb{R}^n$, by sampling a few pairs of points $(x,y)\in\mathcal{C}\times\mathcal{C}$ and computing an upper bound on $\|\nabla f(y)-\nabla f(x)\|_2/\|y-x\|_2$. Unless specified otherwise, we set $\delta\leftarrow10^{-3}$ and $K\leftarrow+\infty$ in BoostFW. The role of $K$ is only to cap the number of pursuit rounds per iteration when the FW oracle is particularly expensive (see Section~\ref{sec:traffic}).

\subsection{Sparse signal recovery}
\label{sec:signal}

Let $x^*\in\mathbb{R}^n$ be a signal which we want to recover as a sparse representation from observations $y=Ax^*+w$, where $A\in\mathbb{R}^{m\times n}$ and $w\sim\mathcal{N}(0,\sigma^2I_m)$ is the noise in the measurements. The natural formulation of the problem is
\begin{align*}
 \min_{x\in\mathbb{R}^n}\;&\|y-Ax\|_2^2\\
 \text{s.t.}\;&\|x\|_0\leq\|x^*\|_0
\end{align*}
but the $\ell_0$-pseudo-norm $\|\cdot\|_0:x\in\mathbb{R}^n\mapsto|\{i\in\llbracket1,n\rrbracket\mid[x]_i\neq0\}|$ is nonconvex and renders the problem intractable in many situations \citep{nphard95}. To remedy this, the $\ell_1$-norm is often used as a convex surrogate and leads to the following lasso formulation \citep{tib96lasso} of the problem:
\begin{align*}
 \min_{x\in\mathbb{R}^n}\;&\|y-Ax\|_2^2\\
 \text{s.t.}\;&\|x\|_1\leq\|x^*\|_1.
\end{align*}
In order to compare to DICG, which is not applicable to this formulation, we ran all algorithms on the reformulation~\eqref{pb:2n}. We set $m=200$, $n=500$, $\sigma=0.05$, and $\tau=\|x^*\|_1$. Since the objective function is quadratic, we can derive a closed-form solution to the line search and there is no need for AFW-L or BoostFW-L. The results are presented in Figure~\ref{fig:signal}.\\

\begin{figure}[H]
\centering{\includegraphics[scale=0.59]{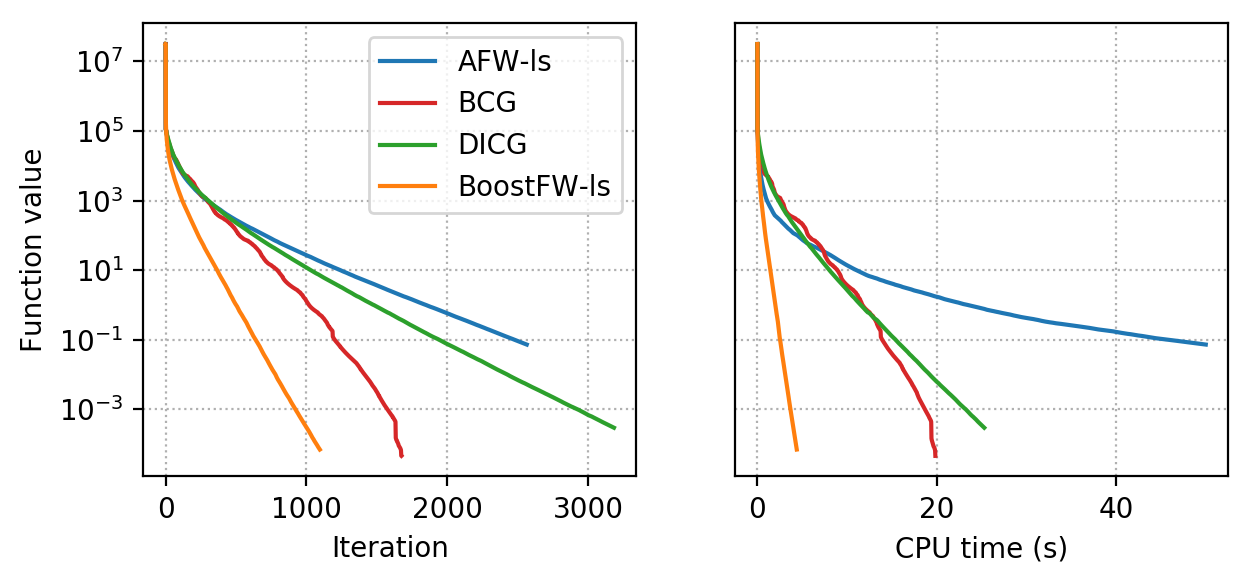}}
\caption{Sparse signal recovery.}
\label{fig:signal}
\end{figure}

\subsection{Sparsity-constrained logistic regression}
\label{sec:gisette}

We consider the task of recognizing the handwritten digits \textsf{4} and \textsf{9} from the Gisette dataset \citep{guyon05challenge}, available at \url{https://archive.ics.uci.edu/ml/datasets/Gisette}. The dataset includes a high number of distractor features with no predictive power. Hence, a sparsity-constrained logistic regression model is suited for the task. The sparsity-inducing constraint is realized via the $\ell_1$-norm:
\begin{align*}
 \min_{x\in\mathbb{R}^n}\;&\frac{1}{m}\sum_{i=1}^m\ln(1+\exp(-y_ia_i^\top x))\\
 \text{s.t.}\;&\|x\|_1\leq\tau
\end{align*}
where $a_1,\ldots,a_m\in\mathbb{R}^{n}$ and $y\in\{-1,+1\}^m$. In order to compare to DICG, which is not applicable to this formulation, we ran all algorithms on the reformulation~\eqref{pb:2n}. We used $m=2000$ samples and the number of features is $n=5000$. We set $\tau=10$, $L=0.5$, and $\delta\leftarrow10^{-4}$ in BoostFW. The results are presented in Figure~\ref{fig:gisette}. As expected, AFW-L and BoostFW-L converge faster in CPU time as they do not rely on line search, however they converge slower per iteration as each iteration provides less progress.\\

\begin{figure}[H]
\centering{\includegraphics[scale=0.59]{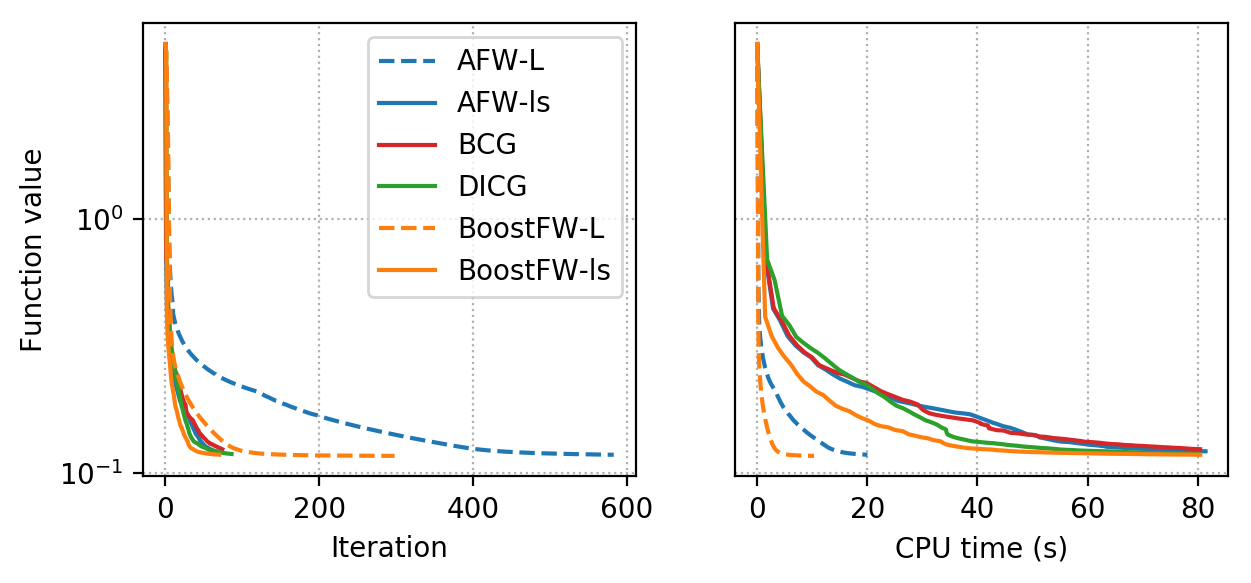}}
\caption{Sparse logistic regression on the Gisette dataset.}
\label{fig:gisette}
\end{figure}

\subsection{Traffic assignment}
\label{sec:traffic}

We consider the traffic assignment problem. The task is to assign vehicles on a traffic network in order to minimize congestion while satisfying travel demands. Let $\mathcal{A}$, $\mathcal{R}$, and $\mathcal{S}$ be the sets of links, routes, and origin-destination pairs respectively. For every pair $(i,j)\in\mathcal{S}$, let $\mathcal{R}_{i,j}$ and $d_{i,j}$ be the set of routes and the travel demand from $i$ to $j$. Let $x_a$ and $t_a$ be the flow and the travel time on link $a\in\mathcal{A}$, and let $y_r$ be the flow on route $r\in\mathcal{R}$. The Beckmann formulation of the problem \citep{beckmann56}, derived from the Wardrop equilibrium conditions \citep{wardrop52}, is
\begin{alignat}{2}
 \min_{x\in\mathbb{R}^{|\mathcal{A}|}}\;&\sum_{a\in\mathcal{A}}\int_{0}^{x_a}t_a(\xi)\operatorname{d}\!\xi\label{pb:traffic}\\
 \text{s.t.}\;&x_a=\sum_{r\in\mathcal{R}}\mathds{1}_{\{a\in r\}}y_r\quad&&a\in\mathcal{A}\nonumber\\
 &\sum_{r\in\mathcal{R}_{i,j}}y_r=d_{i,j}&&(i,j)\in\mathcal{S}\nonumber\\
 &y_r\geq0&&r\in\mathcal{R}_{i,j},\,(i,j)\in\mathcal{S}.\nonumber
\end{alignat}
A commonly used expression for the travel time $t_a$ as a function of the flow $x_a$, developed by the Bureau of Public Records, is $t_a:x_a\in\mathbb{R}_+\mapsto\tau_a(1+0.15(x_a/c_a)^4)$ where $\tau_a$ and $c_a$ are the free-flow travel time and the capacity of the link. A linear minimization over the feasible region in~\eqref{pb:traffic} amounts to computing the shortest routes between all origin-destination pairs. Thus, the FW oracle is particularly expensive here so we capped the maximum number of rounds in BoostFW to $K\leftarrow5$; see Figure~\ref{fig:angle-improvs-traffic} in Appendix~\ref{apx:angle-improvs}. We implemented the oracle using the function \texttt{all\_pairs\_dijkstra\_path} from the Python package \texttt{networkx} \citep{networkx}. We created a directed acyclic graph with $500$ nodes split into $20$ layers of $25$ nodes each, and randomly dropped links with probability $0.5$ so $|\mathcal{A}|\approx6000$ and $|\mathcal{S}|\approx113000$. We set $d_{i,j}\sim\mathcal{U}(\left[0,1\right])$ for every $(i,j)\in\mathcal{S}$. DICG is not applicable here and AFW-L and BoostFW-L were not competitive. The results are presented in Figure~\ref{fig:traffic}.\\

\begin{figure}[H]
\centering{\includegraphics[scale=0.59]{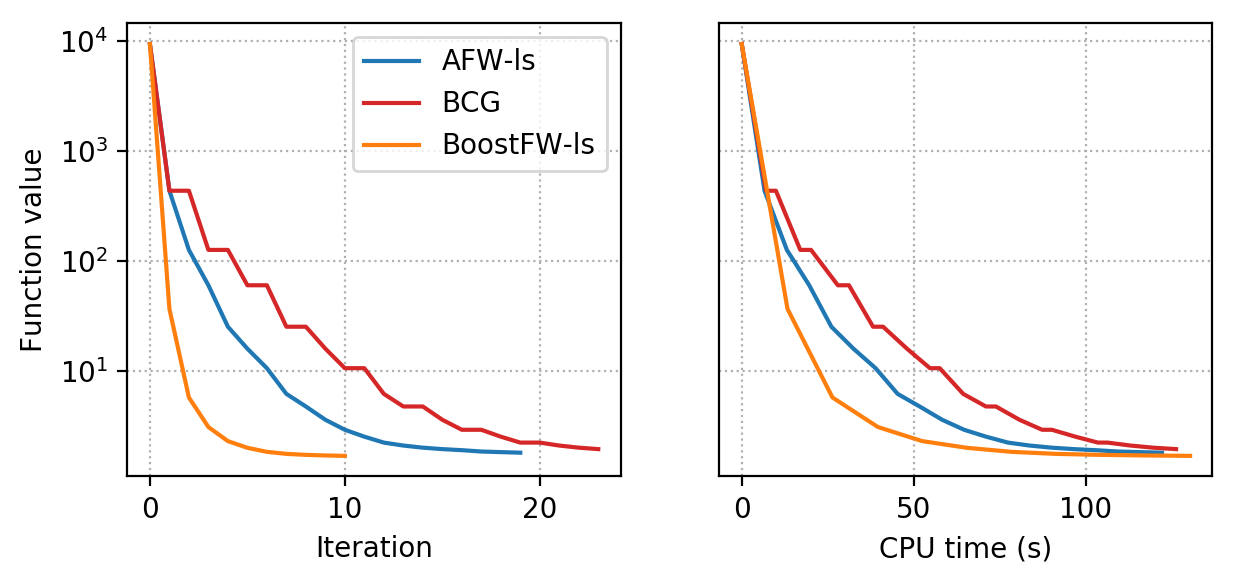}}
\caption{Traffic assignment.}
\label{fig:traffic}
\end{figure}

\subsection{Collaborative filtering}
\label{sec:completion}

We consider the task of collaborative filtering on the MovieLens 100k dataset \citep{harper15movie}, available at \url{https://grouplens.org/datasets/movielens/100k/}. The low-rank assumption on the solution and the approach of \citet{mehta07collabo} lead to the following problem formulation:
\begin{align*}
 \min_{X\in\mathbb{R}^{m\times n}}\;&\frac{1}{|\mathcal{I}|}\sum_{(i,j)\in\mathcal{I}}h_\rho(Y_{i,j}-X_{i,j})\\
 \text{s.t.}\;&\|X\|_{\operatorname{nuc}}\leq\tau
\end{align*}
where $h_\rho$ is the Huber loss with parameter $\rho>0$ \citep{huber64}:
\begin{align*}
 h_\rho:t\in\mathbb{R}\mapsto
 \begin{cases}
  t^2/2&\text{if }|t|\leq\rho\\
  \rho(|t|-\rho/2)&\text{if }|t|>\rho,
 \end{cases}
\end{align*}
$Y\in\mathbb{R}^{m\times n}$ is the given matrix to complete, $\mathcal{I}\subseteq\llbracket1,m\rrbracket\times\llbracket1,n\rrbracket$ is the set of indices of observed entries in $Y$, and $\|\cdot\|_{\operatorname{nuc}}:X\in\mathbb{R}^{m\times n}\mapsto\operatorname{tr}(\sqrt{X^\top X})=\sum_{i=1}^{\min\{m,n\}}\sigma_i(X)$ is the nuclear norm and equals the sum of the singular vectors. It serves as a convex surrogate for the rank constraint \citep{fazel01trace}. Since
\begin{align*}
 \{X\in\mathbb{R}^{m\times n}\mid\|X\|_{\operatorname{nuc}}=1\}
 =\operatorname{conv}(\{uv^\top\mid u\in\mathbb{R}^m,v\in\mathbb{R}^n,\|u\|_2=\|v\|_2=1\}),
\end{align*}
a linear minimization over the nuclear norm-ball of radius $\tau$ amounts to computing the top left and right singular vectors $u$ and $v$ of $-\nabla f(X_t)$ and to return $\tau uv^\top$. To this end, we used the function \texttt{svds} from the Python package \texttt{scipy.sparse.linalg} \citep{scipy}. We have $m=943$, $n=1682$, and $|\mathcal{I}|=10^5$, and we set $\rho=1$, $\tau=5000$, and $L=5\cdot10^{-6}$. DICG is not applicable here. The results are presented in Figure~\ref{fig:completion}.\\

\begin{figure}[H]
\centering{\includegraphics[scale=0.59]{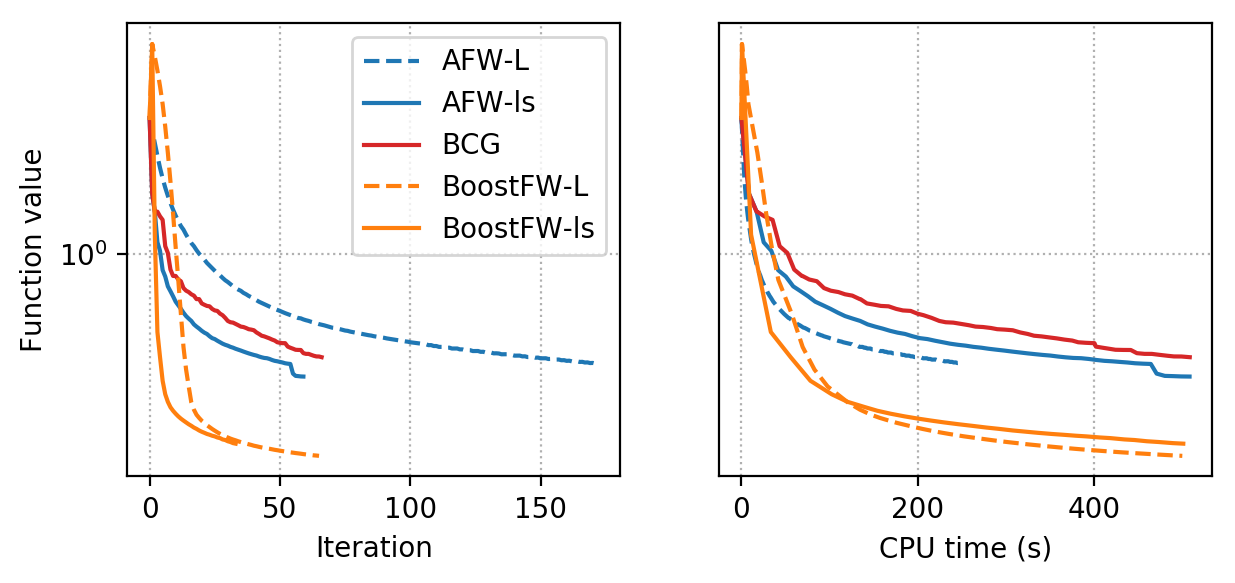}}
\caption{Collaborative filtering on the MovieLens 100k dataset.}
\label{fig:completion}
\end{figure}

The time limit here was set to $500$ seconds but for AFW-L we reduced it to $250$ seconds, else it raises a memory error on our machine shortly after. This is because AFW requires storing the decomposition of the iterate onto $\mathcal{V}$. Note that BoostFW-ls converges faster in CPU time than AFW-L, although it relies on line search, and that BoostFW-L converges faster per iteration than the other methods although it does not rely on line search.

\subsection{Video co-localization}
\label{sec:video}

We consider the task of video co-localization on the aeroplane class of the YouTube-Objects dataset \citep{prest12video}, using the problem formulation of \citet{joulin15video}. The goal is to localize (with bounding boxes) the aeroplane object across the video frames. It consists in minimizing $f:x\in\mathbb{R}^{660}\mapsto x^\top Ax/2+b^\top x$ over a flow polytope, where $A\in\mathbb{R}^{660\times660}$, $b\in\mathbb{R}^{660}$, and the polytope each encode a part of the temporal consistency in the video frames. We obtained the data from \url{https://github.com/Simon-Lacoste-Julien/linearFW}. A linear minimization over the flow polytope amounts to computing a shortest path in the corresponding directed acyclic graph. We implemented the boosting procedure for DICG, which we labeled BoostDICG; see details in Appendix~\ref{apx:boostdicg}. Since the objective function is quadratic, we can derive a closed-form solution to the line search and there is no need for AFW-L or BoostFW-L. We set $\delta\leftarrow10^{-7}$ in BoostFW and $\delta\leftarrow10^{-15}$ in BoostDICG. The results are presented in Figure~\ref{fig:video}.\\

\begin{figure}[h]
\centering{\includegraphics[scale=0.59]{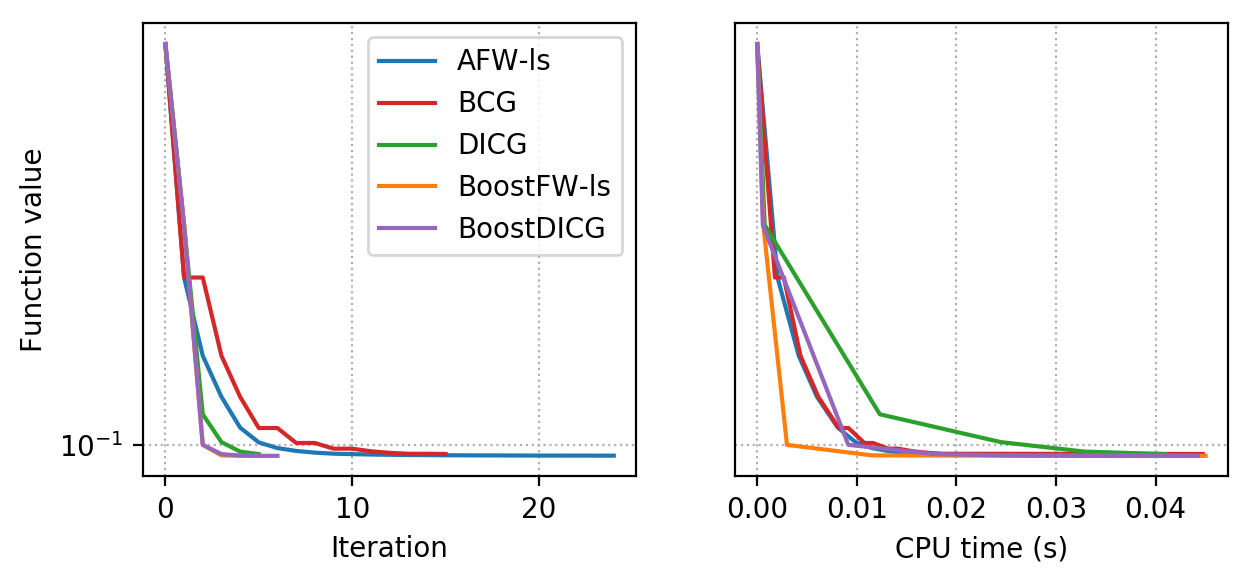}}
\caption{Video co-localization on the YouTube-Objects dataset.}
\label{fig:video}
\end{figure}

All algorithms provide a similar level of performance in function value. In \citet{garber16dicg}, the algorithms are compared with respect to the duality gap $\max_{v\in\mathcal{V}}\langle\nabla f(x_t),x_t-v\rangle$ \citep{jaggi13fw} on the same experiment. For completeness, we report a similar study in Figure~\ref{fig:video-gap}. The boosting procedure applied to DICG produces very promising empirical results.\\

\begin{figure}[h]
\centering{\includegraphics[scale=0.59]{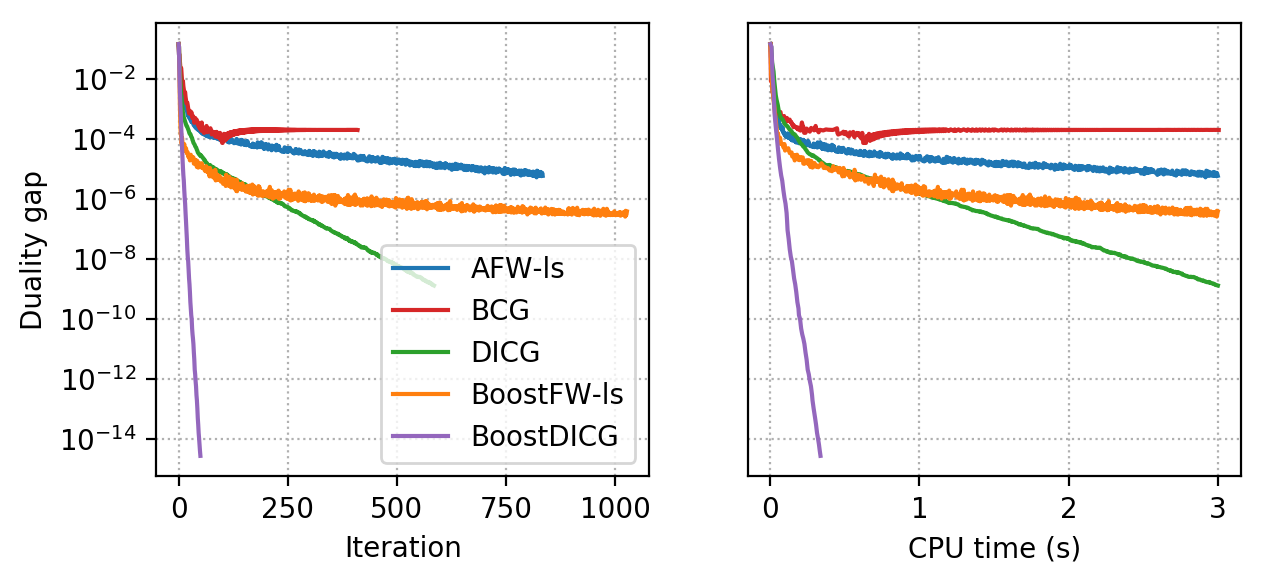}}
\caption{Video co-localization on the YouTube-Objects dataset.}
\label{fig:video-gap}
\end{figure}

Appendix~\ref{apx:full} presents comparisons in duality gap for the other experiments. DICG converges faster than BoostFW in duality gap here (after closing it to $10^{-6}$ though), but it is not the case in the other experiments.

\section{Final remarks}
\label{sec:conclusion}

We have proposed a new and intuitive method to speed up the Frank-Wolfe algorithm by descending in directions better aligned with those of the negative gradients $-\nabla f(x_t)$, all the while remaining projection-free. Our method does not need to maintain the decomposition of the iterates and can naturally be used to boost the performance of any Frank-Wolfe-style algorithm. Although the linear minimization oracle may be called multiple times per iteration, the progress obtained greatly overcomes this cost and leads to strong gains in performance. We demonstrated in a variety of experiments the computational advantage of our method both per iteration and in CPU time over the state-of-the-art. Furthermore, it does not require line search to produce strong performance in practice, which is particularly useful on instances where these are excessively expensive.\\

Future work may replace the gradient pursuit procedure with a faster conic optimization algorithm to potentially reduce the number of oracle calls. It could also be interesting to investigate how to make each oracle call cheaper via, e.g., \emph{lazification} \citep{braun17lazy} or subsampling \citep{kerdreux18subsampling}. Lastly, we expect significant gains in performance when applying our approach to chase the gradient estimators in (non)convex stochastic Frank-Wolfe algorithms as well \citep{svrf16,orgfw20}.

\subsection*{Acknowledgments}

Research reported in this paper was partially supported by NSF CAREER Award CMMI-1452463.

\bibliographystyle{abbrvnat}
{\small\bibliography{biblio}}

\clearpage
\appendix
\onecolumn

\section{Complementary plots}
\label{apx:plots}

\subsection{Lower bound on the number of oracle calls}
\label{apx:lower}

Recall that for any $x\in\mathbb{R}^n$, $\|x\|_0\coloneqq|\{i\in\llbracket1,n\rrbracket\mid[x]_i\neq0\}|$ denotes the number of nonzero entries in $x$. Consider the problem of minimizing $f:x\in\mathbb{R}^n\mapsto\|x\|_2^2$ over the standard simplex $\Delta_n$:
\begin{align*}
 \min_{x\in\Delta_n}\|x\|_2^2.
\end{align*}
Since $\Delta_n$ is the convex hull of the standard basis, Lemma~\ref{lem:lower} establishes a lower bound on the function value of any Frank-Wolfe-style algorithm with respect to the number of oracle calls.

\begin{lemma}[{\citep[Lemma~3]{jaggi13fw}}]
 \label{lem:lower}
 Let $f:x\in\mathbb{R}^n\mapsto\|x\|_2^2$. Then for all $k\in\llbracket1,n\rrbracket$, 
 \begin{align*}
  \min_{\substack{x\in\Delta_n\\\|x\|_0=k}}f(x)=\frac{1}{k}.
 \end{align*}
\end{lemma}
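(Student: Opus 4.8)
The plan is to prove the equality by establishing a matching lower bound and upper bound. The lower bound $\min f \geq 1/k$ will follow from the Cauchy-Schwarz inequality, and the upper bound $\min f \leq 1/k$ will follow by exhibiting an explicit feasible point attaining the value $1/k$. Since the feasible set $\{x\in\Delta_n\mid\|x\|_0=k\}$ is nonempty for every $k\in\llbracket1,n\rrbracket$, this suffices.

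First I would prove the lower bound. Let $x\in\Delta_n$ with $\|x\|_0=k$, and let $S\coloneqq\{i\in\llbracket1,n\rrbracket\mid[x]_i\neq0\}$ denote its support, so $|S|=k$. Applying Cauchy-Schwarz to the vector $([x]_i)_{i\in S}\in\mathbb{R}^k$ and the all-ones vector in $\mathbb{R}^k$ gives
\begin{align*}
 \left(\sum_{i\in S}[x]_i\right)^2\leq|S|\sum_{i\in S}[x]_i^2=k\,\|x\|_2^2.
\end{align*}
Because $x\in\Delta_n$ and the entries outside $S$ vanish, $\sum_{i\in S}[x]_i=1^\top x=1$, so the left-hand side equals $1$ and we obtain $\|x\|_2^2\geq1/k$. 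As $x$ was an arbitrary feasible point, the minimum is at least $1/k$.

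Next I would verify the upper bound by choosing the uniform vector on a $k$-element support, e.g. $\bar{x}\coloneqq(1/k)\sum_{i=1}^ke_i$. This point lies in $\Delta_n$ since its entries are nonnegative and sum to $1$, and it has exactly $k$ nonzero entries, so $\|\bar{x}\|_0=k$ and it is feasible. Its objective value is $\|\bar{x}\|_2^2=\sum_{i=1}^k(1/k)^2=k\cdot(1/k^2)=1/k$, matching the lower bound. Combining the two inequalities yields the claimed value $1/k$.

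There is no substantial obstacle here: the result is a direct consequence of Cauchy-Schwarz together with the explicit minimizer, and the equality case of Cauchy-Schwarz confirms that $\bar{x}$ is in fact the unique minimizer on any fixed support. The only point requiring minor care is the bookkeeping around the support constraint $\|x\|_0=k$, namely that restricting the sum to $S$ does not change $1^\top x$ and that the exhibited minimizer genuinely has exactly $k$ nonzero entries rather than fewer.
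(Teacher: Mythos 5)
Your proof is correct. The paper does not actually supply its own proof of this lemma---it is imported verbatim as Lemma~3 of \citet{jaggi13fw}---and your argument (Cauchy-Schwarz on the support for the lower bound $\|x\|_2^2\geq1/k$, plus the uniform $k$-sparse point $\bar{x}=(1/k)\sum_{i=1}^ke_i$ for the matching upper bound) is exactly the standard one used there, with the bookkeeping on the support handled properly.
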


Indeed, suppose that $x_0\in\{e_1,\ldots,e_n\}$ is a standard vector and consider FW (Algorithm~\ref{fw}). FW makes exactly one call to the oracle in each iteration and adds the new vertex to the convex decomposition of the iterate, hence $\|x_t\|_0\leq t+1$ for all $t\in\llbracket1,n-1\rrbracket$. Therefore, Lemma~\ref{lem:lower} shows that the iterates of FW satisfy the lower bound
\begin{align*}
 f(x_t)\geq\frac{1}{t+1}.
\end{align*}
This derivation can be extended to any Frank-Wolfe-style algorithm by comparing the function value at iteration $t$ vs.~the number of oracle calls up to iteration $t$. In Figure~\ref{fig:lower}, we demonstrate that although BoostFW may call the oracle multiple times per iteration, it is still compatible with the lower bound. We set $n=1000$ and since the objective is quadratic, we used an exact line search step-size strategy in FW-ls and BoostFW-ls. Note that the optimal value of the problem is $1/n=10^{-3}$.\\

\begin{figure}[H]
\centering{\includegraphics[scale=0.59]{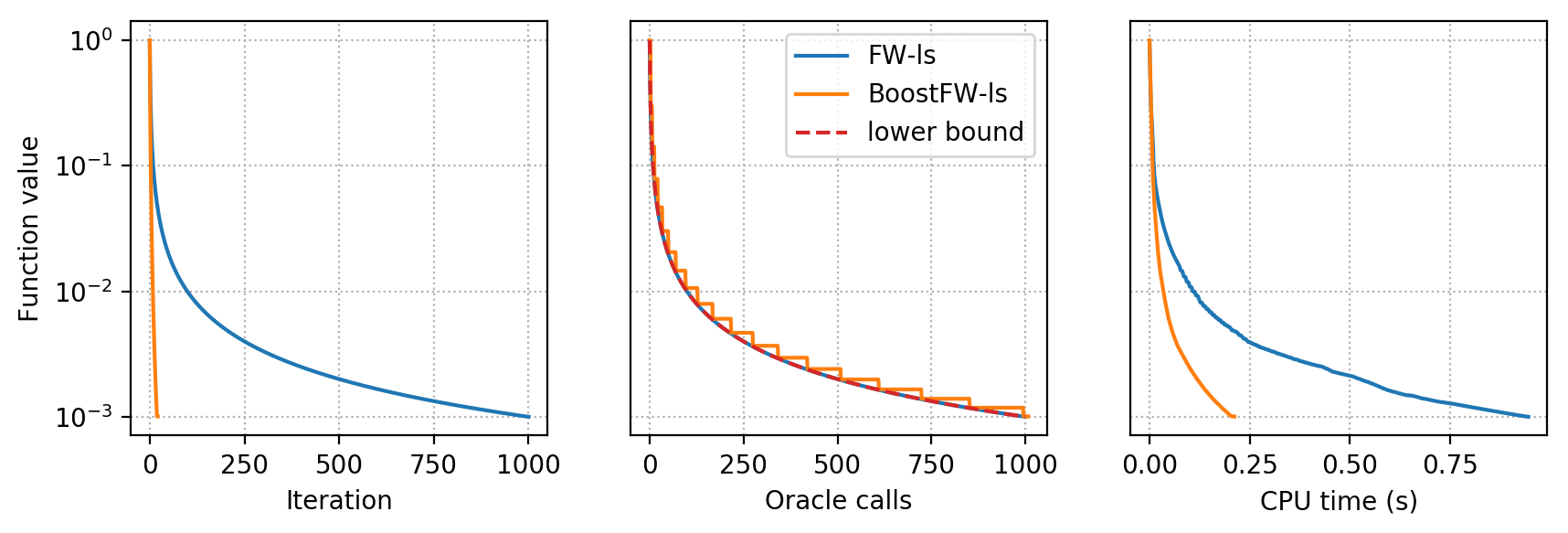}}
\caption{Lower bound on the number of oracle calls.}
\label{fig:lower}
\end{figure}

\clearpage
\subsection{Illustration of the improvements in alignment during the gradient pursuit procedure}
\label{apx:angle-improvs}

We define the relative improvement in alignment between rounds $k-1$ and $k\in\llbracket2,K_t\rrbracket$ in the gradient pursuit procedure at iteration $t\in\llbracket0,T-1\rrbracket$ of BoostFW (Algorithm~\ref{boofw}) as
\begin{align*}
 \theta_{t,k}
 \coloneqq\frac{\operatorname{align}(-\nabla f(x_t),d_k)-\operatorname{align}(-\nabla f(x_t),d_{k-1})}{\operatorname{align}(-\nabla f(x_t),d_{k-1})}.
\end{align*}
For a fixed round $k$, we plot in Figure~\ref{fig:angle-improvs} the mean of $\theta_{t,k}$ across all iterations $t$ that performed a $k$-th round, i.e., 
\begin{align*}
 \theta_k\coloneqq\frac{1}{|\{t\in\llbracket0,T-1\rrbracket\mid k\leq K_t\}|}\sum_{t=0}^{T-1}\theta_{t,k}\mathds{1}_{\{k\leq K_t\}},
\end{align*}
in the sparse signal recovery experiment (Section~\ref{sec:signal}). The error bars represent $\pm1$ standard deviation. We see that on average the second round produces an improvement in alignment of $\sim32\%$, the third round produces an improvement of $\sim16\%$, etc. In particular, the plot could suggest that $7$ rounds in each iteration are enough.

\begin{figure}[H]
\centering{\includegraphics[scale=0.59]{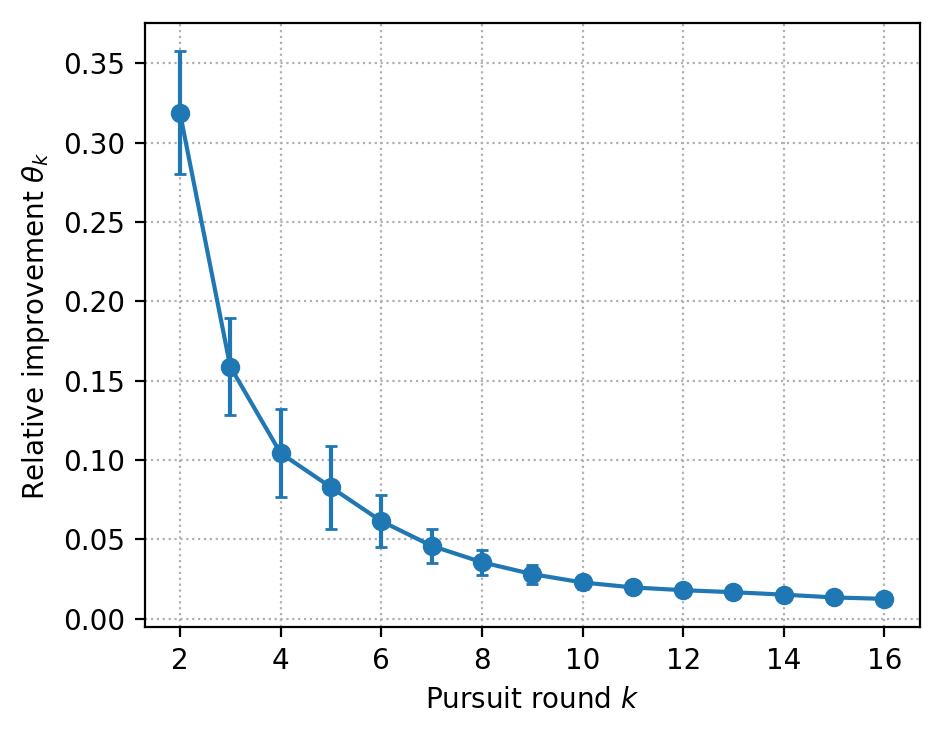}}
\caption{Relative improvements in alignment during the gradient pursuit procedure of BoostFW.}
\label{fig:angle-improvs}
\end{figure}

In the traffic assignment experiment (Section~\ref{sec:traffic}), the FW oracle is particularly expensive so we decided to cap the maximum number of rounds $K$. We plot in Figure~\ref{fig:angle-improvs-traffic} the relative improvements in alignment, and we chose to set $K\leftarrow5$.

\begin{figure}[H]
\centering{\includegraphics[scale=0.59]{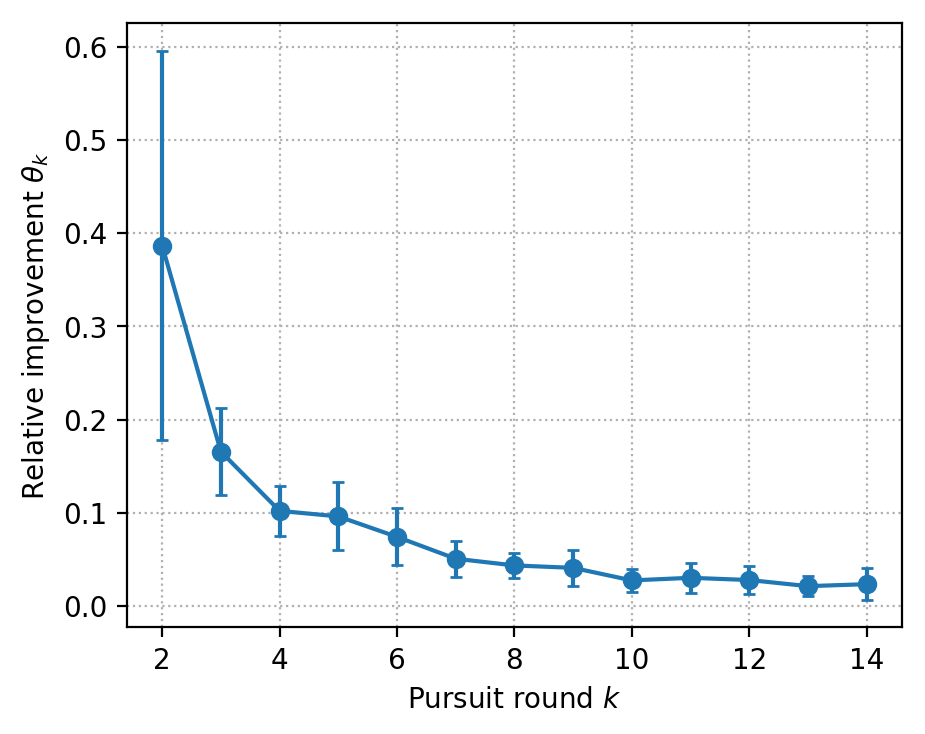}}
\caption{Traffic assignment (Section~\ref{sec:traffic}).}
\label{fig:angle-improvs-traffic}
\end{figure}

\clearpage
\subsection{Computational experiments}
\label{apx:full}

Here we provide additional plots for each experiment of Section~\ref{sec:exp}: comparisons in number of oracle calls and in duality gap. The duality gap is $\max_{v\in\mathcal{V}}\langle\nabla f(x_t),x_t-v\rangle$ \citep{jaggi13fw} and we did not account for the CPU time taken to plot it. In number of oracle calls, the plots have a stair-like behavior as multiple calls can be made within an iteration. We see that BoostFW performs more oracle calls than the other methods in general however it converges faster both per iteration and in CPU time. Note that in the traffic assignment experiment (Figure~\ref{fig:traffic-oracles}), BoostFW also converges faster per oracle call. In the sparse logistic regression experiment (Figure~\ref{fig:gisette-oracles}), the line search-free strategies converge faster in CPU time and the line search strategies converge faster per iteration, but in the collaborative filtering experiment (Figure~\ref{fig:huber-oracles}), BoostFW-ls and BoostFW-L respectively converge faster than expected.\\

\begin{figure}[H]
\centering{\includegraphics[scale=0.59]{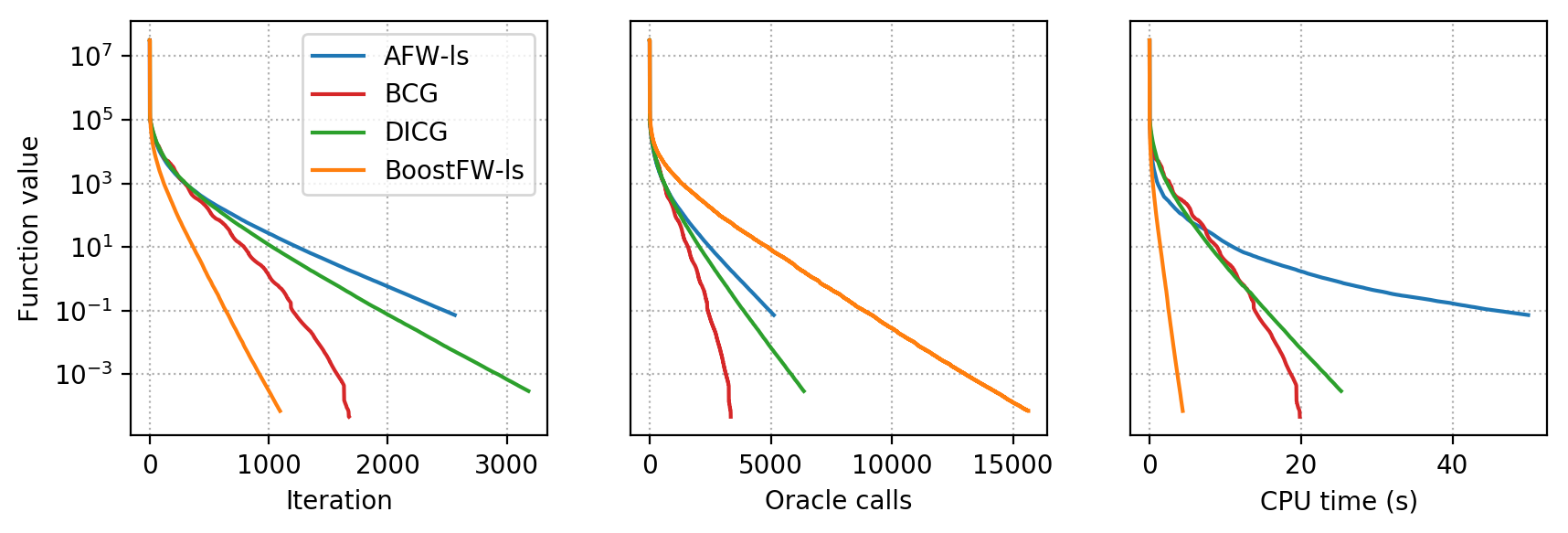}}
\centering{\includegraphics[scale=0.59]{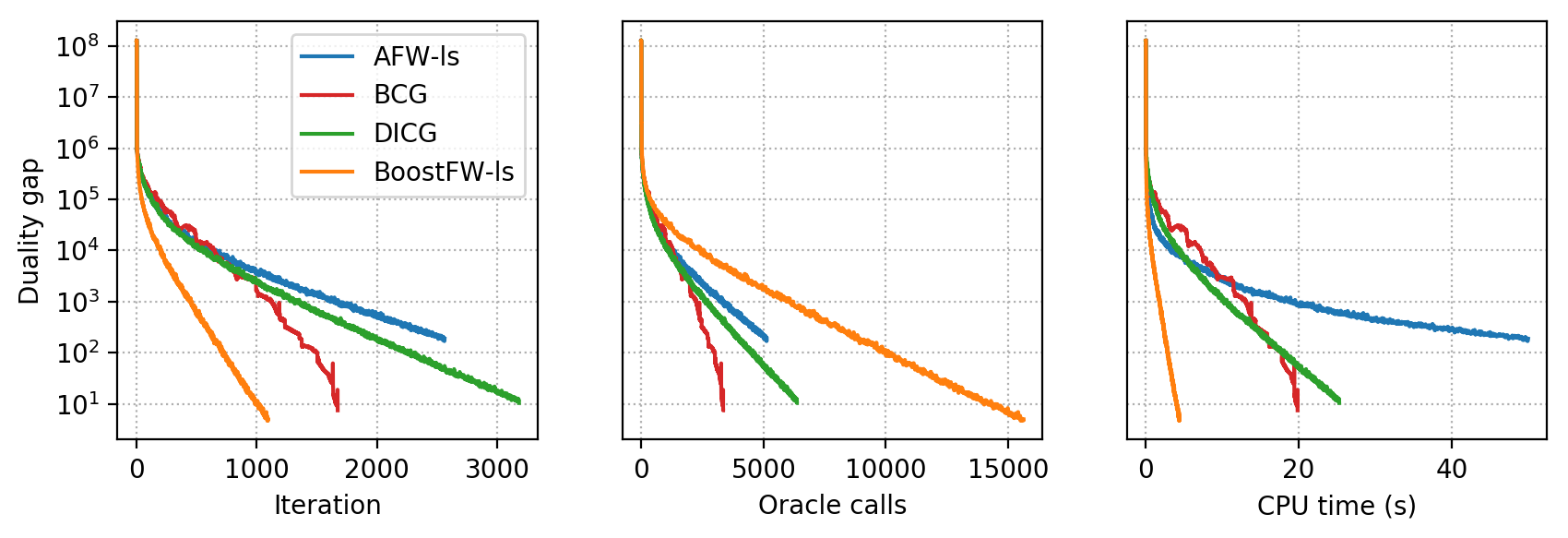}}
\caption{Sparse signal recovery (Section~\ref{sec:signal}).}
\label{fig:signal-oracles}
\end{figure}

\begin{figure}[H]
\centering{\includegraphics[scale=0.59]{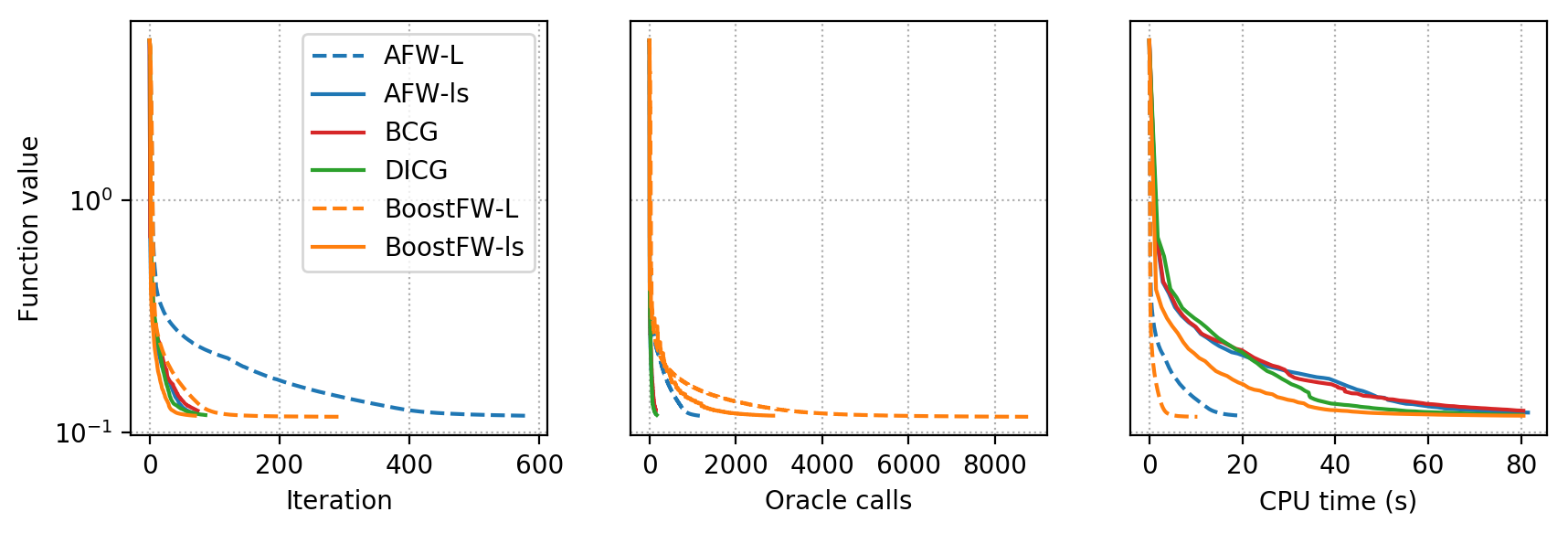}}
\centering{\includegraphics[scale=0.59]{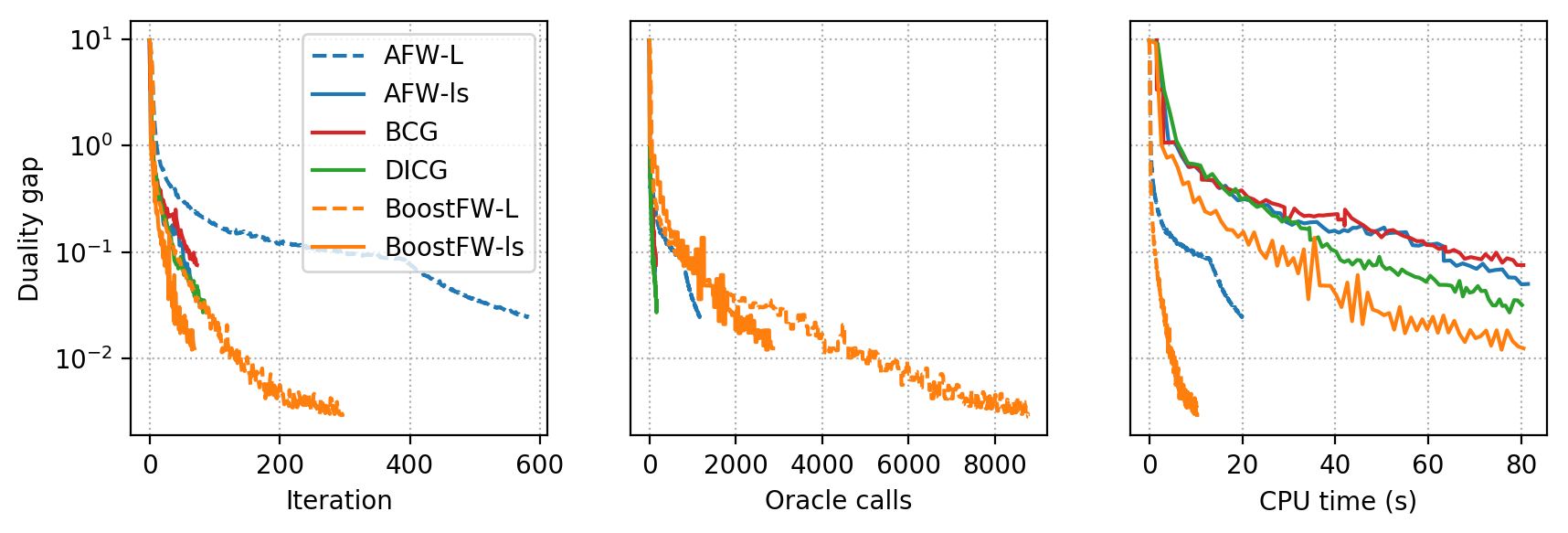}}
\caption{Sparse logistic regression on the Gisette dataset (Section~\ref{sec:gisette}).}
\label{fig:gisette-oracles}
\end{figure}

\vfill

\begin{figure}[H]
\centering{\includegraphics[scale=0.59]{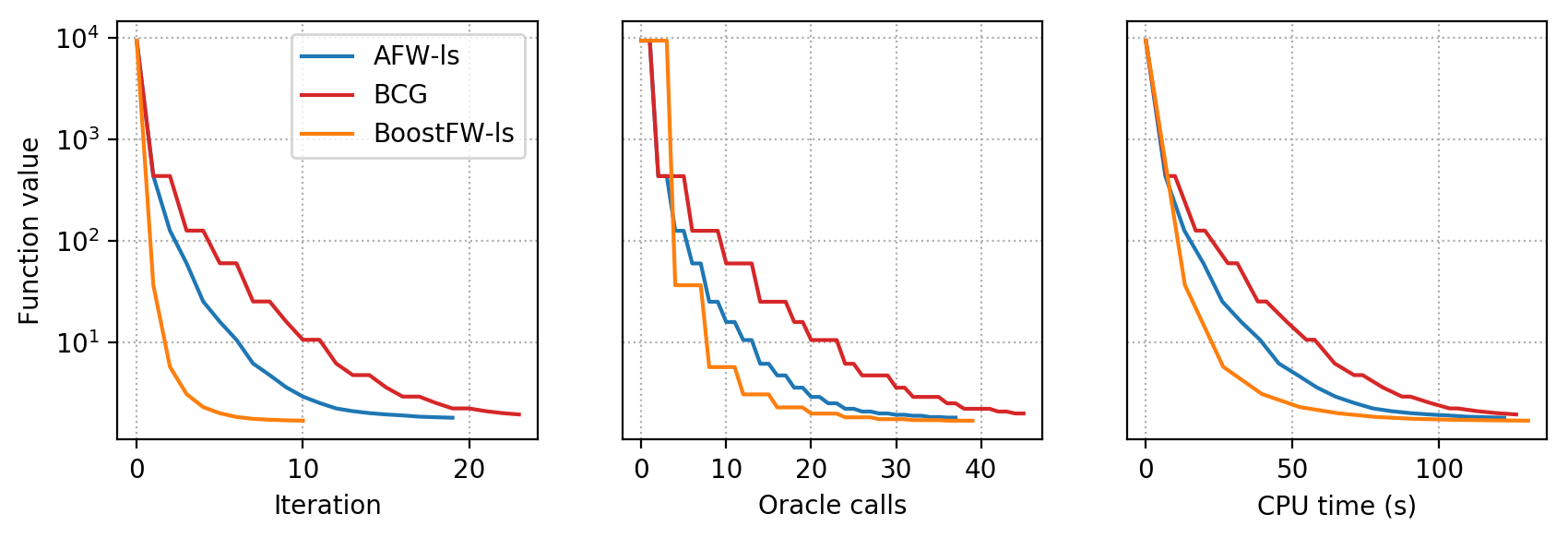}}
\centering{\includegraphics[scale=0.59]{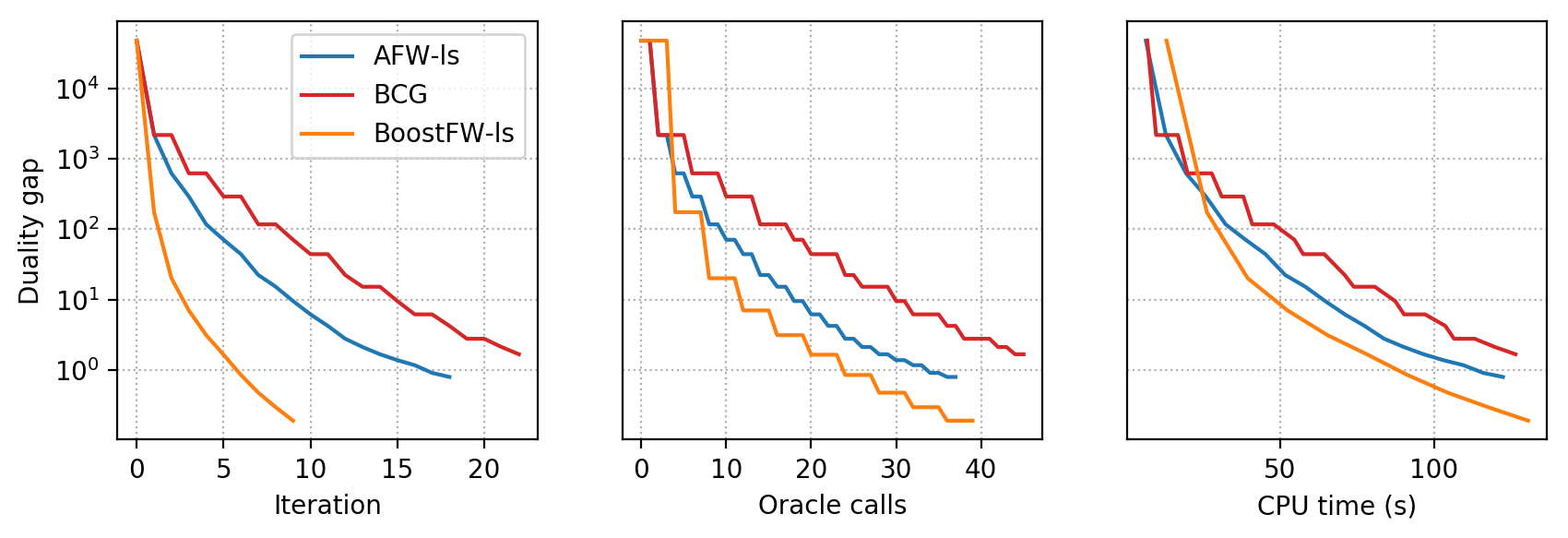}}
\caption{Traffic assignment (Section~\ref{sec:traffic}).}
\label{fig:traffic-oracles}
\end{figure}

\begin{figure}[H]
\centering{\includegraphics[scale=0.59]{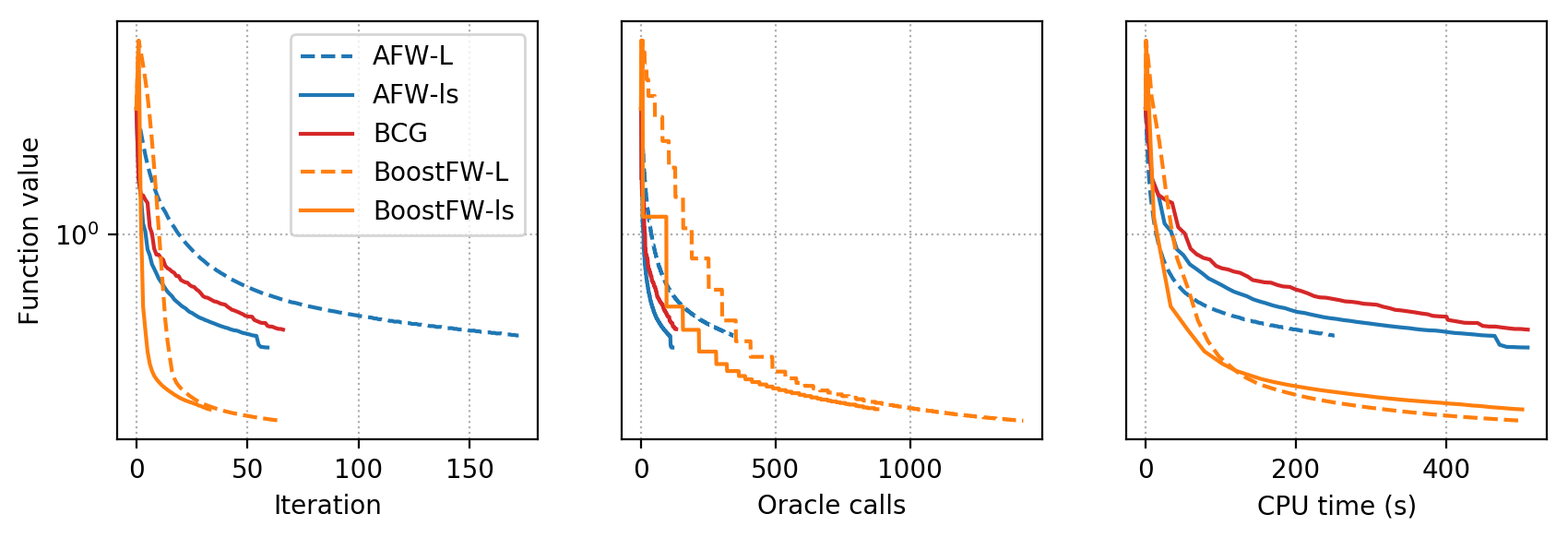}}
\centering{\includegraphics[scale=0.59]{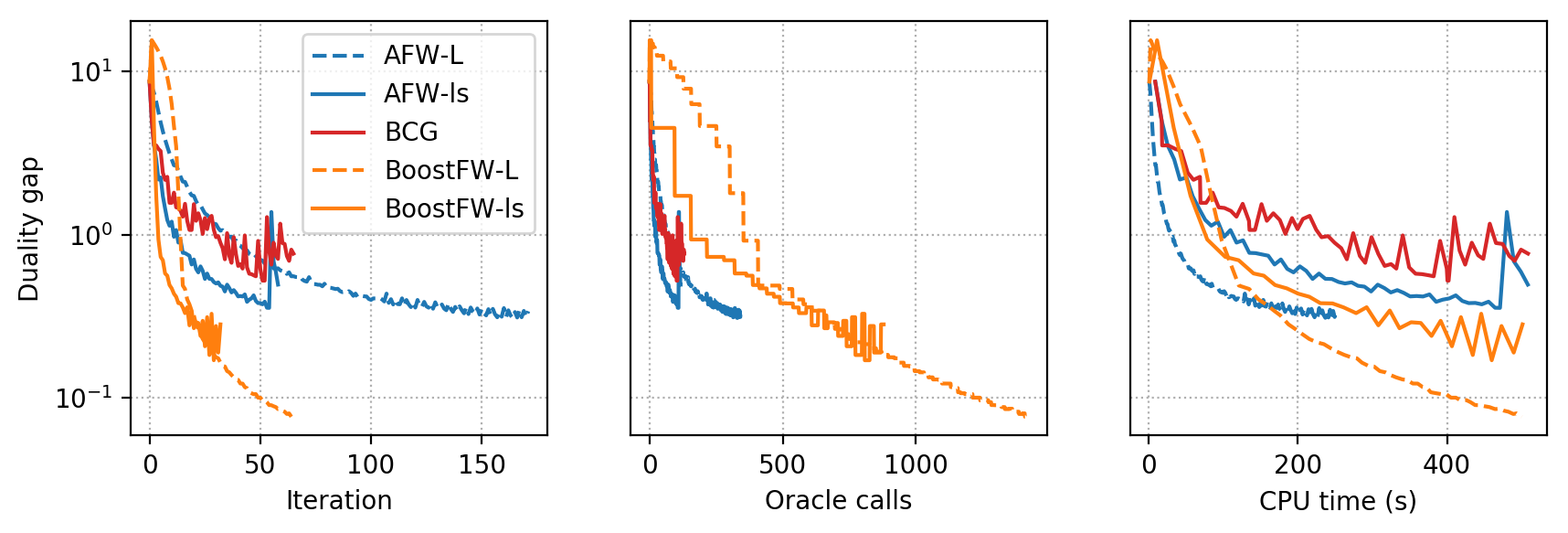}}
\caption{Collaborative filtering on the MovieLens 100k dataset (Section~\ref{sec:completion}).}
\label{fig:huber-oracles}
\end{figure}

\vfill

\begin{figure}[H]
\centering{\includegraphics[scale=0.59]{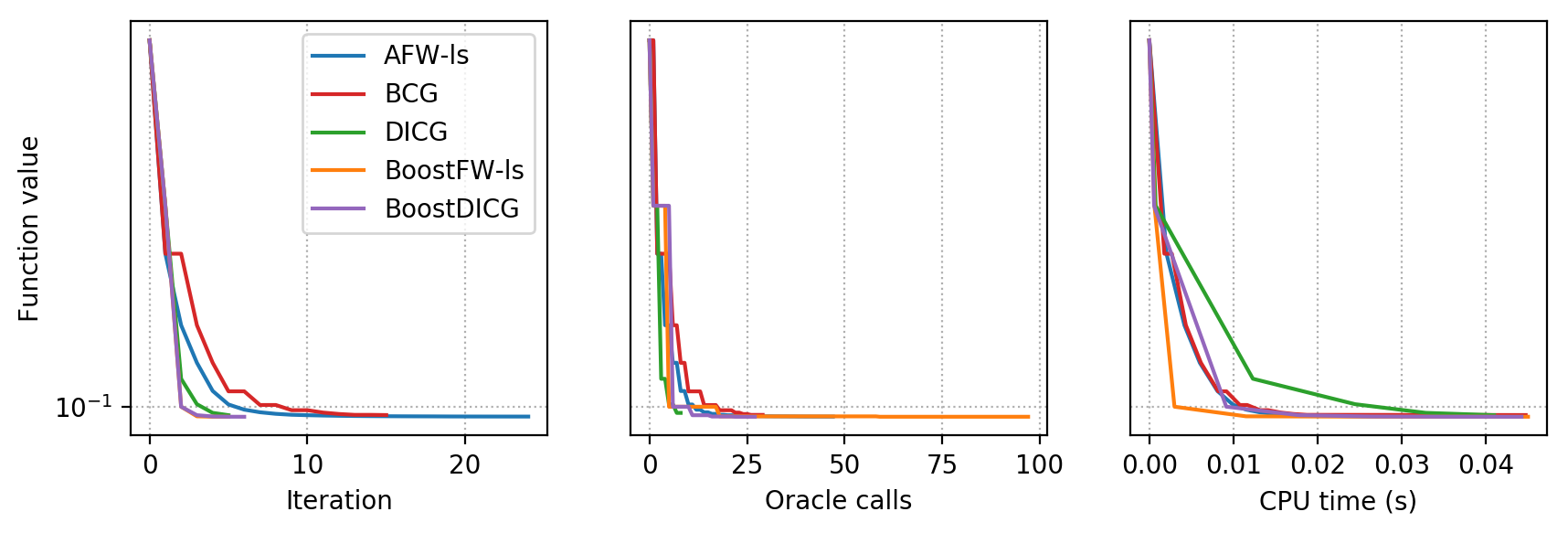}}
\centering{\includegraphics[scale=0.59]{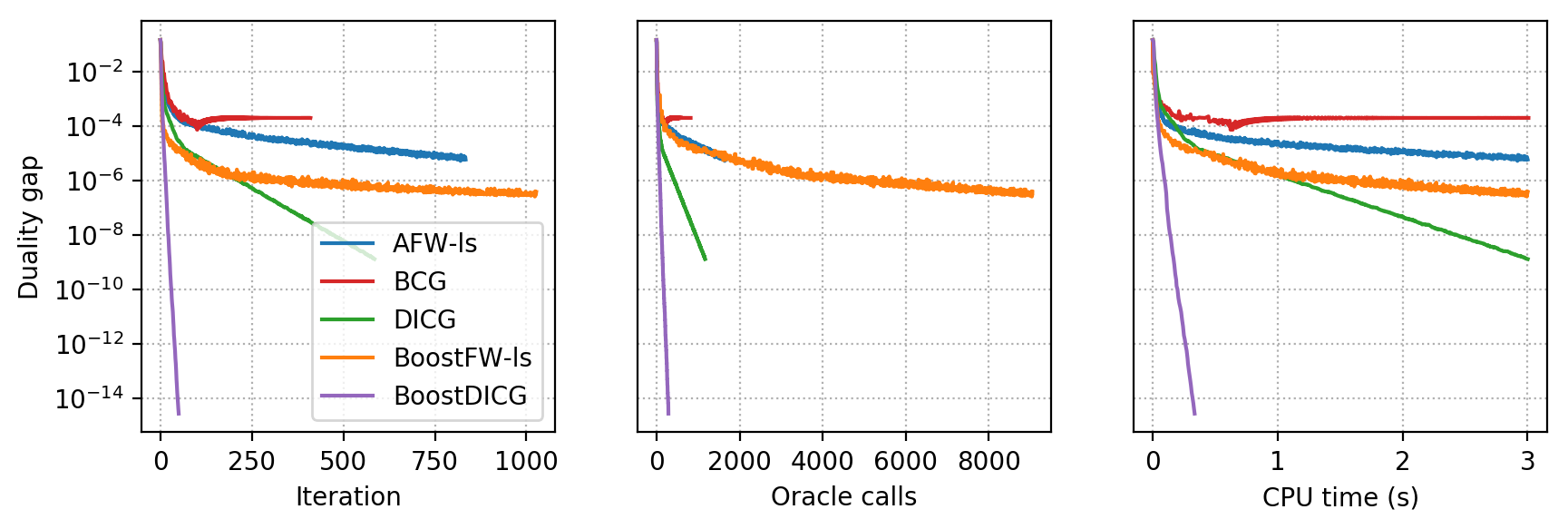}}
\caption{Video co-localization on the YouTube-Objects dataset (Section~\ref{sec:video}).}
\label{fig:video-oracles}
\end{figure}

\clearpage
\section{Boosting DICG}
\label{apx:boostdicg}

We present an application of the boosting procedure to another Frank-Wolfe-style algorithm. Although the Away-Step and Pairwise Frank-Wolfe algorithms \citep{lacoste15linear} are more similar in essence to the vanilla Frank-Wolfe algorithm, we chose to apply our approach to the Decomposition-Invariant Pairwise Conditional Gradient (DICG) \citep{garber16dicg} because it does not need to maintain the decomposition of the iterates, which is a very favorable property in practice.\\

We recall DICG in Algorithm~\ref{dicg} and present our proposition for BoostDICG in Algorithm~\ref{boostdicg}. Notice that since DICG moves in the pairwise direction $v_t^\text{FW}-v_t^\text{away}$, in BoostDICG we chase the direction of $-\nabla f(x_t)$ from $v_t^\text{away}$ (and not from $x_t$). Proposition~\ref{prop:boostdicg} shows that the iterates of BoostDICG are feasible. Similarly to DICG, BoostDICG is applicable only to polytopes of the form $\mathcal{P}=\{x\in\mathbb{R}^n\mid Ax=b,x\geq0\}$ with set of vertices $\mathcal{V}\subseteq\{0,1\}^n$, and it does not need to maintain the decomposition of the iterates. See also \citet{bashiri17dicg} for a follow-up work extending DICG to arbitrary polytopes.\\

\begin{algorithm}[h]
\caption{Decomposition-Invariant Pairwise Conditional Gradient (DICG)}
\label{dicg}
\textbf{Input:} Start point $x_0\in\mathcal{P}$.\\
\textbf{Output:} Point $x_T\in\mathcal{P}$.
\begin{algorithmic}[1]
\STATE$x_1\leftarrow\argmin\limits_{v\in\mathcal{V}}\langle\nabla f(x_0),v\rangle$
\FOR{$t=1$ \textbf{to} $T-1$}
\STATE$v_t^\text{FW}\leftarrow\argmin\limits_{v\in\mathcal{V}}\langle\nabla f(x_t),v\rangle$\hfill$\triangleright${ FW oracle}
\STATE$\left[\tilde{\nabla}f(x_t)\right]_i\leftarrow
\begin{cases}
 [\nabla f(x_t)]_i&\textbf{if }[x_t]_i>0\\
 -\infty&\textbf{if }[x_t]_i=0
\end{cases}
\quad\textbf{ for }i\in\llbracket1,n\rrbracket$
\STATE$v_t^\text{away}\leftarrow\argmax\limits_{v\in\mathcal{V}}\langle\tilde{\nabla}f(x_t),v\rangle$\hfill$\triangleright${ FW oracle}
\STATE$\bar{\gamma}_t\leftarrow\max\{\gamma\in\left[0,1\right]\mid x_t+\gamma(v_t^\text{FW}-v_t^\text{away})\geq0\}$
\STATE$\gamma_t\leftarrow\argmin\limits_{\gamma\in\left[0,\bar{\gamma}_t\right]}f(x_t+\gamma(v_t^\text{FW}-v_t^\text{away}))$
\STATE$x_{t+1}\leftarrow x_t+\gamma_t(v_t^\text{FW}-v_t^\text{away})$
\ENDFOR
\end{algorithmic}
\end{algorithm}

\begin{algorithm}[h]
\caption{Boosted Decomposition-Invariant Pairwise Conditional Gradient (BoostDICG)}
\label{boostdicg}
\textbf{Input:} Input point $y\in\mathcal{C}$, maximum number of rounds $K\in\mathbb{N}\backslash\{0\}$, alignment improvement tolerance $\delta\in\left]0,1\right[$, step-size strategy $\gamma_t\in\left[0,1\right]$.\\
\textbf{Output:} Point $x_T\in\mathcal{P}$.
\begin{algorithmic}[1]
\STATE$x_0\leftarrow\argmin\limits_{v\in\mathcal{V}}\langle\nabla f(y),v\rangle$
\FOR{$t=0$ \textbf{to} $T-1$}
\STATE$\left[\tilde{\nabla}f(x_t)\right]_i\leftarrow
\begin{cases}
 [\nabla f(x_t)]_i&\textbf{if }[x_t]_i>0\\
 -\infty&\textbf{if }[x_t]_i=0
\end{cases}
\quad$ \textbf{for} $i\in\llbracket1,n\rrbracket$
\STATE$v_t^\text{away}\leftarrow\argmax\limits_{v\in\mathcal{V}}\langle\tilde{\nabla}f(x_t),v\rangle$\hfill$\triangleright${ FW oracle}

\STATE$d_0\leftarrow0$
\STATE$\Lambda_t\leftarrow0$
\FOR{$k=0$ \textbf{to} $K-1$}
\STATE$r_k\leftarrow-\nabla f(x_t)-d_k$\hfill$\triangleright${ $k$-th residual}
\STATE$v_k\leftarrow\argmax\limits_{v\in\mathcal{V}}\langle r_k,v\rangle$\hfill$\triangleright${ FW oracle}
\STATE$u_k\leftarrow\argmax\limits_{u\in\{v_k-v_t^\text{away},-d_k/\|d_k\|\}}\langle r_k,u\rangle$
\STATE$\lambda_k\leftarrow\displaystyle\frac{\langle r_k,u_k\rangle}{\|u_k\|^2}$
\STATE$d_{k+1}'\leftarrow d_k+\lambda_ku_k$
\IF{$\operatorname{align}(-\nabla f(x_t),d_{k+1}')-\operatorname{align}(-\nabla f(x_t),d_k)\geq\delta$}
\STATE$d_{k+1}\leftarrow d_{k+1}'$
\STATE$\Lambda_t\leftarrow\begin{cases}\Lambda_t+\lambda_k&\textbf{if }u_k=v_k-v_t^\text{away}\\\Lambda_t(1-\lambda_k/\|d_k\|)&\textbf{if }u_k=-d_k/\|d_k\|\end{cases}$
\ELSE
\STATE\textbf{break}\hfill$\triangleright${ exit $k$-loop}
\ENDIF
\ENDFOR
\STATE$K_t\leftarrow k$
\STATE$g_t\leftarrow d_{K_t}/\Lambda_t$\hfill$\triangleright${ normalization}

\STATE$\bar{\gamma}_t\leftarrow\max\{\gamma\in\left[0,1\right]\mid x_t+\gamma g_t\geq0\}$
\STATE$\gamma_t\leftarrow\argmin\limits_{\gamma\in\left[0,\bar{\gamma}_t\right]}f(x_t+\gamma g_t)$
\STATE$x_{t+1}\leftarrow x_t+\gamma_tg_t$
\ENDFOR
\end{algorithmic}
\end{algorithm}

\begin{proposition}
\label{prop:boostdicg}
 The iterates of BoostDICG (Algorithm~\ref{boostdicg}) are feasible.
\end{proposition}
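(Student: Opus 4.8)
The plan is to prove feasibility by induction on the iteration counter $t$, maintaining the two-part invariant $x_t\in\mathcal{P}$, i.e.\ $Ax_t=b$ together with $x_t\geq0$. For the base case, $x_0$ is returned by the linear minimization oracle in the initialization of Algorithm~\ref{boostdicg} and is therefore a vertex, $x_0\in\mathcal{V}\subseteq\mathcal{P}$, so the invariant holds at $t=0$.

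For the inductive step I would assume $x_t\in\mathcal{P}$ and verify the two constraints for $x_{t+1}=x_t+\gamma_tg_t$ separately. The nonnegativity constraint is enforced by construction through the cap $\bar\gamma_t=\max\{\gamma\in\left[0,1\right]\mid x_t+\gamma g_t\geq0\}$: since $x_t\geq0$ the value $\gamma=0$ lies in this set, and because each coordinate map $\gamma\mapsto[x_t]_i+\gamma[g_t]_i$ is affine, the set $\{\gamma\geq0\mid x_t+\gamma g_t\geq0\}$ is an interval of the form $\left[0,\bar\gamma_t\right]$. Hence any $\gamma_t\in\left[0,\bar\gamma_t\right]$, and in particular the line-search minimizer, yields $x_t+\gamma_tg_t\geq0$.

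The substantive part is the equality constraint, for which it suffices to show $g_t\in\ker A$; then $Ax_{t+1}=Ax_t+\gamma_tAg_t=b$. Since $g_t=d_{K_t}/\Lambda_t$, this reduces to proving $Ad_k=0$ for all $k$, which I would establish by an inner induction on the pursuit round. We have $d_0=0\in\ker A$, and for $d_{k+1}=d_k+\lambda_ku_k$ the update direction $u_k$ is one of two choices: if $u_k=v_k-v_t^\text{away}$, then $v_k,v_t^\text{away}\in\mathcal{V}\subseteq\mathcal{P}$ both satisfy $Av=b$, so $Au_k=0$; if $u_k=-d_k/\|d_k\|$, then $Au_k=-Ad_k/\|d_k\|=0$ by the inner hypothesis. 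In either case $Ad_{k+1}=Ad_k+\lambda_kAu_k=0$, so $Ad_{K_t}=0$ and $Ag_t=0$.

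Combining both parts gives $x_{t+1}\in\mathcal{P}$ and closes the outer induction. The only point requiring care — and where I would focus attention — is that the boosting directions here are anchored at the away vertex $v_t^\text{away}$ rather than at $x_t$ as in BoostFW; the equality argument still goes through because the relevant quantity is a pairwise difference of two genuine vertices, both of which satisfy $Av=b$, so it lands in $\ker A$. This is the analogue for the boosted variant of the decomposition-invariant feasibility argument of \citet{garber16dicg}, and it is precisely what lets BoostDICG avoid ever maintaining the vertex decomposition of its iterates.
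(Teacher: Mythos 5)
Your proof is correct, and for the key step it takes a genuinely lighter route than the paper. Both proofs share the outer structure (induction on $t$, with nonnegativity handled identically via the cap $\bar\gamma_t$ and the affinity of $\gamma\mapsto x_t+\gamma g_t$), but they diverge on the equality constraint. The paper transfers the conical-decomposition machinery of Proposition~\ref{prop}\ref{prop:gx} to show the stronger statement $g_t+v_t^\text{away}\in\operatorname{conv}(\mathcal{V})=\mathcal{P}$ --- which requires $\lambda_k\geq0$ and the bookkeeping that $\Lambda_t$ equals the sum of conical coefficients of $d_{K_t}$ --- and then extracts $Ag_t=0$ from $A(g_t+v_t^\text{away})=b=Av_t^\text{away}$. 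You instead prove $Ad_k=0$ directly by an inner induction on the pursuit rounds, using only linearity of $A$ and the observation that both candidate directions $u_k$ (a difference of two vertices, or a rescaling of $d_k$) lie in $\ker A$; this bypasses the normalization argument and the sign of the $\lambda_k$ entirely. What you lose is the geometric fact that $g_t$ points from $v_t^\text{away}$ into the polytope, but that fact is not needed here since the explicit cap $\bar\gamma_t$ already enforces $x_{t+1}\geq0$; what you gain is a shorter, self-contained argument for the affine constraint. Your closing remark correctly identifies the one point of care --- the anchoring at $v_t^\text{away}$ rather than $x_t$ --- and why it is harmless.
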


\begin{proof}
 We proceed by induction. By definition, $x_0\in\argmin_{v\in\mathcal{V}}\langle\nabla f(y),v\rangle\subseteq\mathcal{P}$. Let $t\in\llbracket0,T-1\rrbracket$ and suppose that $x_t\in\mathcal{P}$. Then $x_{t+1}\leftarrow x_t+\gamma_tg_t$ where $\gamma_t\in\left[0,\bar{\gamma}_t\right]$ and $\bar{\gamma}_t\leftarrow\max\{\gamma\in\left[0,1\right]\mid x_t+\gamma g_t\geq0\}$. Similarly to the proof of Proposition~\ref{prop}\ref{prop:gx}, we can show that $g_t+v_t^\text{away}\in\mathcal{P}$. Thus, since $x_{t+1}=x_t+\gamma_t((g_t+v_t^\text{away})-v_t^\text{away})$ and $Ax_t=b$, $A(g_t+v_t^\text{away})=b$, and $Av_t^\text{away}=b$ by feasibility, we have $Ax_{t+1}=b$. Lastly, $x_t+\bar{\gamma}_tg_t\geq0$ so, since $x_t\geq0$ and $\gamma_t\in\left[0,\bar{\gamma}_t\right]$, we have $x_t+\gamma_tg_t\geq0$, i.e., $x_{t+1}\geq0$. Therefore, $x_{t+1}\in\mathcal{P}$. 
\end{proof}

\clearpage
\section{A result on the Away-Step Frank-Wolfe algorithm}
\label{apx:afw}

We first recall the Away-Step Frank-Wolfe algorithm (AFW) \citep{wolfe70} in Algorithm~\ref{afw} and its convergence rate over polytopes with line search in Theorem~\ref{th:afwls} \citep{lacoste15linear}. This analysis is based on the pyramidal width of the polytope and the geometric strong convexity of the objective function (Lemma~\ref{lem:geom}). We refer the reader to \citet[Section~3]{lacoste15linear} for the definition of the pyramidal width.\\

Let $\mathcal{V}\subset\mathcal{P}$ denote the set of vertices of the polytope $\mathcal{P}$. In Algorithm~\ref{afw}, $\lambda_t\in\mathbb{R}^{|\mathcal{V}|}$ denotes the distribution of coefficients of the convex decomposition of $x_t$ over $\mathcal{V}$ and $\lambda_t(v)$ is the coefficient of vertex $v\in\mathcal{V}$ (as determined by the algorithm). Note that when an away step is taken and $\gamma_t=\gamma_{\max}$, then $\lambda_{t+1}(v_t^\text{away})=0$ where $v_t^\text{away}\in\mathcal{S}_t$. Thus, these steps always decrease the size of the active set: $|\mathcal{S}_{t+1}|<|\mathcal{S}_t|$. They are often referred to as \emph{drop} steps.\\

\begin{algorithm}[h]
\caption{Away-Step Frank-Wolfe (AFW)}
\label{afw}
\textbf{Input:} Start vertex $x_0\in\mathcal{V}$, step-size strategy $\gamma_t\in\left[0,1\right]$.\\
\textbf{Output:} Point $x_T\in\mathcal{P}$.
\begin{algorithmic}[1]
\STATE$\mathcal{S}_0\leftarrow\{x_0\}$\hfill$\triangleright${ active set}
\STATE$\lambda_0\leftarrow\{\mathds{1}_{\{v=x_0\}}
\textbf{ for }v\in\mathcal{V}\}$\hfill$\triangleright${ distribution of coefficients}
\FOR{$t=0$ \textbf{to} $T-1$}
\STATE$v_t^\text{FW}\leftarrow\argmin\limits_{v\in\mathcal{V}}\langle\nabla f(x_t),v\rangle$\hfill$\triangleright${ FW oracle}\label{afw:vfw}
\STATE$v_t^\text{away}\leftarrow\argmax\limits_{v\in\mathcal{S}_t}\langle\nabla f(x_t),v\rangle$
\IF{$\langle\nabla f(x_t),x_t-v_t^\text{FW}\rangle\geq\langle\nabla f(x_t),v_t^\text{away}-x_t\rangle$}\label{afw:criterion}
\STATE$x_{t+1}\leftarrow x_t+\gamma_t(v_t^\text{FW}-x_t)$\hfill$\triangleright${ FW step}\label{afw:fw}
\STATE$\mathcal{S}_{t+1}\leftarrow\mathcal{S}_t\cup\{v_t^\text{FW}\}$
\STATE$\lambda_{t+1}(v)\leftarrow(1-\gamma_t)\lambda_t(v)+\gamma_t\mathds{1}_{\{v=v_t^\text{FW}\}}\textbf{ for }v\in\mathcal{S}_{t+1}$
\ELSE\label{afw:criterion2}
\STATE$\gamma_{\max}\leftarrow\lambda_t(v_t^\text{away})/(1-\lambda_t(v_t^\text{away}))$
\STATE$x_{t+1}\leftarrow x_t+\gamma_t(x_t-v_t^\text{away})$\hfill$\triangleright${ away step}\label{afw:away}
\STATE$\mathcal{S}_{t+1}\leftarrow\mathcal{S}_t$
\STATE$\lambda_{t+1}(v)\leftarrow(1+\gamma_t)\lambda_t(v)-\gamma_t\mathds{1}_{\{v=v_t^\text{away}\}}\textbf{ for }v\in\mathcal{S}_{t+1}$
\ENDIF
\STATE$\mathcal{S}_{t+1}\leftarrow\{v\in\mathcal{S}_{t+1}\mid\lambda_{t+1}(v)>0\}$
\ENDFOR
\end{algorithmic}
\end{algorithm}

\begin{lemma}[{\citep[Equations~(28) and~(23)]{lacoste15linear}}]
 \label{lem:geom}
 Let $\mathcal{P}\subset\mathbb{R}^n$ be a polytope with pyramidal width $W>0$ and $f:\mathbb{R}^n\rightarrow\mathbb{R}$ be a $S$-strongly convex function. Then the Away-Step Frank-Wolfe algorithm (AFW, Algorithm~\ref{afw}) ensures for all $t\in\llbracket0,T-1\rrbracket$,
 \begin{align*}
  f(x_t)-\min_\mathcal{P}f
  \leq\frac{\langle\nabla f(x_t),v_t^\text{away}-v_t^\text{FW}\rangle^2}{2SW^2}.
 \end{align*}
\end{lemma}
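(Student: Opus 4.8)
The plan is to reproduce the two-ingredient argument behind \citet{lacoste15linear}: couple the $S$-strong convexity of $f$ with a purely geometric inequality, encoded by the pyramidal width $W$, that lower-bounds the pairwise gap $\langle\nabla f(x_t),v_t^\text{away}-v_t^\text{FW}\rangle$ by the directional derivative of $f$ at $x_t$ toward the minimizer. Fix $t$, let $x^*\in\argmin_\mathcal{P}f$, and write $h_t\coloneqq f(x_t)-\min_\mathcal{P}f\geq0$ and $\rho\coloneqq\|x_t-x^*\|$ (the case $\rho=0$ being trivial since then $h_t=0$).

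First I would apply $S$-strong convexity to the pair $(x_t,x^*)$: from $f(x^*)-f(x_t)-\langle\nabla f(x_t),x^*-x_t\rangle\geq\tfrac{S}{2}\rho^2$, rearranging yields $\langle\nabla f(x_t),x_t-x^*\rangle\geq h_t+\tfrac{S}{2}\rho^2$, where the left-hand side is moreover nonnegative by convexity (since $x^*$ is feasible and minimizes $f$ over $\mathcal{P}$).

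The crux — and the step I expect to be the main obstacle — is the geometric inequality
\begin{align*}
 \langle\nabla f(x_t),x_t-x^*\rangle\leq\frac{\rho}{W}\,\langle\nabla f(x_t),v_t^\text{away}-v_t^\text{FW}\rangle.
\end{align*}
Writing $r\coloneqq-\nabla f(x_t)$, the pairwise gap equals $\max_{s\in\mathcal{V},\,a\in\mathcal{S}_t}\langle r,s-a\rangle$, because $v_t^\text{FW}$ maximizes $\langle r,\cdot\rangle$ over $\mathcal{V}$ while $v_t^\text{away}$ minimizes it over the active set $\mathcal{S}_t$. This matches the pyramidal directional width of $\mathcal{V}$ at $x_t$ in direction $r$ (the current active set $\mathcal{S}_t$ being one admissible choice in the minimum defining the width), and the pyramidal width $W$ is defined precisely so that this quantity dominates $W$ times the normalized directional derivative $\langle r,x^*-x_t\rangle/\rho=\langle\nabla f(x_t),x_t-x^*\rangle/\rho$ toward the optimum. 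The delicate point, which is exactly why the pyramidal width is the right invariant, is that $-\nabla f(x_t)$ need not lie in the feasibility cone at $x_t$, so one cannot naively compare the pairwise gap to $W\|\nabla f(x_t)\|$; it is the feasible direction $x^*-x_t$ that must be used. I would not reprove this fact from scratch — it is the content of Section~3 of \citet{lacoste15linear} (their Equations~(23) and~(28), which identify the geometric strong convexity constant with $SW^2$) — but would invoke it, since the difficulty lies entirely in the polytope geometry relating $x^*-x_t$ to the active set, which the pyramidal width abstracts away.

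Finally I would combine the two displays. Setting $g\coloneqq\langle\nabla f(x_t),v_t^\text{away}-v_t^\text{FW}\rangle\geq0$ and chaining gives $h_t+\tfrac{S}{2}\rho^2\leq g\rho/W$, hence $h_t\leq g\rho/W-\tfrac{S}{2}\rho^2$. Since the right-hand side is a concave quadratic in $\rho$ maximized at $\rho=g/(SW)$ with maximal value $g^2/(2SW^2)$, and the actual value of $\rho$ is just one admissible argument, I obtain $h_t\leq g^2/(2SW^2)=\langle\nabla f(x_t),v_t^\text{away}-v_t^\text{FW}\rangle^2/(2SW^2)$, which is the claim. This closing optimization of a quadratic is routine and presents no real difficulty.
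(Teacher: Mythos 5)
The paper does not actually prove this lemma: it is imported verbatim from \citet{lacoste15linear} (their Equations~(28) and~(23)), so there is no in-paper proof to compare against. Your reconstruction is the correct standard derivation behind that citation: strong convexity applied to $(x_t,x^*)$ gives $\langle\nabla f(x_t),x_t-x^*\rangle\geq h_t+\tfrac{S}{2}\rho^2$ with $h_t=f(x_t)-\min_\mathcal{P}f$ and $\rho=\|x_t-x^*\|$; the pyramidal-width inequality bounds the left-hand side by $(\rho/W)\langle\nabla f(x_t),v_t^\text{away}-v_t^\text{FW}\rangle$; and maximizing the resulting concave quadratic in $\rho$ yields $h_t\leq\langle\nabla f(x_t),v_t^\text{away}-v_t^\text{FW}\rangle^2/(2SW^2)$. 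You correctly identify the pairwise gap as $\max_{s\in\mathcal{V},a\in\mathcal{S}_t}\langle-\nabla f(x_t),s-a\rangle$, note that the current active set is an admissible choice in the minimum defining the pyramidal directional width, and defer the one genuinely hard step --- the polytope geometry relating $x^*-x_t$ to the active set --- to \citet{lacoste15linear}, which is no weaker than what the paper itself does for the entire lemma. The only cosmetic omission is the degenerate case $\langle\nabla f(x_t),x_t-x^*\rangle\leq0$, where the geometric inequality is not applicable as stated; but there convexity already gives $h_t\leq0$, so the bound holds trivially and nothing is lost.
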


\begin{theorem}[(AFW with line search, {\citet[Theorem~1]{lacoste15linear}})]
 \label{th:afwls}
 Let $\mathcal{P}\subset\mathbb{R}^n$ be a polytope and $f:\mathbb{R}^n\rightarrow\mathbb{R}$ be a $L$-smooth and $S$-strongly convex function. Consider the Away-Step Frank-Wolfe algorithm (AFW, Algorithm~\ref{afw}) with the line search strategy
 \begin{align*}
  \gamma_t\leftarrow
  \begin{cases}
   \argmin_{\gamma\in\left[0,1\right]}f(x_t+\gamma_t(v_t^\text{FW}-x_t))&\text{in the case of a FW step (Line~\ref{afw:fw})}\\
   \argmin_{\gamma\in\left[0,\gamma_{\max}\right]}f(x_t+\gamma_t(x_t-v_t^\text{away}))&\text{in the case of an away step (Line~\ref{afw:away})}.
  \end{cases}
 \end{align*}
 Then for all $t\in\llbracket0,T\rrbracket$,
 \begin{align*}
  f(x_t)-\min_\mathcal{P}f
  \leq\left(1-\frac{S}{4L}\left(\frac{W}{D}\right)^2\right)^{t/2}\left(f(x_0)-\min_\mathcal{P}f\right)
 \end{align*}
 where $D$ and $W$ are the diameter and the pyramidal width of $\mathcal{P}$ respectively.
\end{theorem}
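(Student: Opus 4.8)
The plan is to classify each iteration of AFW as either a \emph{good step} or a \emph{drop step}, to prove that every good step contracts the primal gap by a fixed multiplicative factor, and to combine this with a counting argument showing that at least half of the iterations are good steps. Throughout, write $h_t\coloneqq f(x_t)-\min_\mathcal{P}f$ and let $d_t$ be the direction taken at iteration $t$, so that $d_t=v_t^\text{FW}-x_t$ on a FW step (Line~\ref{afw:fw}) and $d_t=x_t-v_t^\text{away}$ on an away step (Line~\ref{afw:away}), with $x_{t+1}=x_t+\gamma_td_t$.

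First I would extract the first-order progress guaranteed by the selection rule in Line~\ref{afw:criterion}. Since that rule picks whichever of the FW and away directions has the larger linear gap, and the maximum of two reals is at least their average,
\[
  \langle-\nabla f(x_t),d_t\rangle
  =\max\{\langle\nabla f(x_t),x_t-v_t^\text{FW}\rangle,\langle\nabla f(x_t),v_t^\text{away}-x_t\rangle\}
  \geq\tfrac{1}{2}\langle\nabla f(x_t),v_t^\text{away}-v_t^\text{FW}\rangle.
\]
Next, by $L$-smoothness, $f(x_t+\gamma d_t)\leq f(x_t)-\gamma\langle-\nabla f(x_t),d_t\rangle+\tfrac{L}{2}\gamma^2\|d_t\|^2$, and since line search is at least as good as any fixed $\gamma$, I would plug in the unconstrained minimizer $\gamma^\star=\langle-\nabla f(x_t),d_t\rangle/(L\|d_t\|^2)$. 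Declaring an iteration a \emph{good step} precisely when this $\gamma^\star$ is admissible (interior to the allowed range), and using $\|d_t\|\leq D$ together with the previous inequality,
\[
  h_t-h_{t+1}
  \geq\frac{\langle-\nabla f(x_t),d_t\rangle^2}{2L\|d_t\|^2}
  \geq\frac{\langle\nabla f(x_t),v_t^\text{away}-v_t^\text{FW}\rangle^2}{8LD^2}.
\]
Invoking Lemma~\ref{lem:geom} to bound the squared pairwise gap below by $2SW^2h_t$ then yields the per-good-step contraction
\[
  h_{t+1}\leq\left(1-\frac{S}{4L}\left(\frac{W}{D}\right)^2\right)h_t=\rho\,h_t,
  \qquad
  \rho\coloneqq1-\frac{S}{4L}\left(\frac{W}{D}\right)^2\in(0,1),
\]
where $\rho<1$ because $S\leq L$ and $W\leq D$.

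The remaining iterations are \emph{drop steps}: away steps taken at $\gamma_t=\gamma_{\max}$, which set $\lambda_{t+1}(v_t^\text{away})=0$ and hence strictly shrink the active set, $|\mathcal{S}_{t+1}|<|\mathcal{S}_t|$. For these, line search still guarantees $h_{t+1}\leq h_t$, so they never undo progress. The counting step is the crux: starting from $|\mathcal{S}_0|=1$, the active set never falls below $1$, grows by at most one vertex on a FW step, and loses at least one vertex on each drop step; balancing the total increments against the total decrements over the first $t$ iterations forces the number of drop steps to be at most the number of FW steps, and hence at most the number of good steps. Since good and drop steps partition the $t$ iterations, at least $t/2$ of them are good. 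Chaining the factor $\rho$ over the good steps and using $h_{s+1}\leq h_s$ on the drop steps gives $h_t\leq\rho^{t/2}h_0$, which is the claim.

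The main obstacle is the step-counting bookkeeping rather than the analytic estimate. One must verify carefully that every non-drop iteration genuinely realizes the full short-step decrease, treating the boundary cases where $\gamma^\star$ reaches the endpoint of its admissible range (in particular a FW step with $\gamma_t=1$, which also collapses the active set), and one must make the active-set accounting airtight so that drop steps are truly dominated by vertex-adding steps. By contrast, the per-iteration contraction is a routine composition of the selection rule, the smoothness inequality evaluated at the short-step minimizer, and the geometric strong convexity bound of Lemma~\ref{lem:geom}.
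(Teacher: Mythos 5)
Your proposal is correct and follows essentially the same route as the paper: the paper itself only cites this line-search result from \citet{lacoste15linear}, but its proof of the short-step analogue (Theorem~\ref{th:afw}) is exactly your argument --- the averaging bound from Line~\ref{afw:criterion}, smoothness evaluated at the short step (which line search dominates), Lemma~\ref{lem:geom}, and the drop-step count of at most $\floor{t/2}$. The one loose end you flag, a FW step hitting the boundary $\gamma_t=1$, is resolved as in the paper's proof: there $\langle\nabla f(x_t),x_t-v_t^\text{FW}\rangle\geq L\|x_t-v_t^\text{FW}\|_2^2$ yields $\epsilon_{t+1}\leq\epsilon_t/2\leq\bigl(1-\tfrac{S}{4L}(W/D)^2\bigr)\epsilon_t$, so such iterations count as good steps and your good/drop partition is genuinely exhaustive.
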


We now show in Theorem~\ref{th:afw} that AFW can also achieve the convergence rate of Theorem~\ref{th:afwls} without line search, by using the short step strategy. We were later informed that this result was already derived by \citet{pedregosa20} in a more general setting.

\begin{theorem}[(AFW with short steps)]
\label{th:afw}
 Let $\mathcal{P}\subset\mathbb{R}^n$ be a polytope and $f:\mathbb{R}^n\rightarrow\mathbb{R}$ be a $L$-smooth and $S$-strongly convex function. Consider the Away-Step Frank-Wolfe algorithm (AFW, Algorithm~\ref{afw}) with the step-size strategy
 \begin{align}
 \gamma_t\leftarrow
 \begin{cases}
  \displaystyle\min\left\{\frac{\langle\nabla f(x_t),x_t-v_t^\text{FW}\rangle}{L\|x_t-v_t^\text{FW}\|_2^2},1\right\}&\text{in the case of a FW step (Line~\ref{afw:fw})}\\
  \displaystyle\min\left\{\frac{\langle\nabla f(x_t),v_t^\text{away}-x_t\rangle}{L\|v_t^\text{away}-x_t\|_2^2},\gamma_{\max}\right\}&\text{in the case of an away step (Line~\ref{afw:away})}.
 \end{cases}\label{afw:step}
\end{align}
Then for all $t\in\llbracket0,T\rrbracket$,
\begin{align*}
 f(x_t)-\min_\mathcal{P}f
 \leq\left(1-\frac{S}{4L}\left(\frac{W}{D}\right)^2\right)^{t/2}\left(f(x_0)-\min_\mathcal{P}f\right) 
\end{align*}
where $D$ and $W$ are the diameter and the pyramidal width of $\mathcal{P}$ respectively.
\end{theorem}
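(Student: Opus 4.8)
The plan is to replicate the structure of the line-search analysis underlying Theorem~\ref{th:afwls}, substituting the line-search progress guarantee with the progress delivered by the short step in~\eqref{afw:step}. I would write $h_t\coloneqq f(x_t)-\min_\mathcal{P}f$ for the primal gap, let $d_t$ denote the direction actually followed ($v_t^\text{FW}-x_t$ on a FW step, $x_t-v_t^\text{away}$ on an away step), and set $g_t\coloneqq\langle-\nabla f(x_t),d_t\rangle$, so that $g_t$ equals the FW gap $g_t^\text{FW}=\langle\nabla f(x_t),x_t-v_t^\text{FW}\rangle$ or the away gap $g_t^\text{away}=\langle\nabla f(x_t),v_t^\text{away}-x_t\rangle$ according to the step taken. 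The uncapped short step is $\gamma_t^\star=g_t/(L\|d_t\|^2)$.

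First I would establish the per-step progress. Plugging $\gamma_t=\gamma_t^\star$ into the $L$-smoothness inequality gives $h_t-h_{t+1}\geq g_t^2/(2L\|d_t\|^2)$ whenever the step is not capped, hence at least $g_t^2/(2LD^2)$ since $\|d_t\|\leq D$. The selection rule in Line~\ref{afw:criterion} guarantees $g_t=\max\{g_t^\text{FW},g_t^\text{away}\}\geq\tfrac12(g_t^\text{FW}+g_t^\text{away})=\tfrac12\langle\nabla f(x_t),v_t^\text{away}-v_t^\text{FW}\rangle$, i.e.\ half the pairwise gap. Feeding this into Lemma~\ref{lem:geom}, which bounds $h_t\leq\langle\nabla f(x_t),v_t^\text{away}-v_t^\text{FW}\rangle^2/(2SW^2)$, yields the one-step contraction $h_{t+1}\leq\bigl(1-\tfrac{S}{4L}(W/D)^2\bigr)h_t$ on every uncapped step.

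Next I would dispatch the two capped cases. On a FW step capped at $\gamma_t=1$ the smoothness bound gives $h_{t+1}\leq h_t-g_t^\text{FW}/2$; since $v_t^\text{FW}$ minimizes the linear form and $f$ is convex, $g_t^\text{FW}\geq\langle\nabla f(x_t),x_t-x^\star\rangle\geq h_t$ for $x^\star\in\argmin_\mathcal{P}f$, so $h_{t+1}\leq h_t/2$, which is at least as strong as the contraction above because $\rho\coloneqq\tfrac{S}{4L}(W/D)^2\leq 1/4\leq 1/2$ (using $S\leq L$ and $W\leq D$). On an away step capped at $\gamma_{\max}$ (a drop step), $\gamma_t<\gamma_t^\star$ keeps the smoothness bound strictly negative, so I only claim the weaker $h_{t+1}\leq h_t$. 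Thus every iteration except drop steps contracts by the factor $(1-\rho)$.

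Finally I would run the active-set counting argument of \citet{lacoste15linear} verbatim: $|\mathcal{S}_0|=1$, each FW step grows the active set by at most one, each drop step shrinks it by at least one, and $|\mathcal{S}_t|\geq1$, so over $t$ iterations the number of drop steps is at most $t/2$. Hence at least $t/2$ iterations are contracting, and chaining the per-step bounds gives $h_t\leq(1-\rho)^{t/2}h_0$, which is exactly the claim. The main obstacle is purely the capped-$\gamma$ bookkeeping: showing that the full FW step ($\gamma_t=1$) still contracts via the convexity inequality $g_t^\text{FW}\geq h_t$, and that drop steps are non-increasing. Once these replace the automatic monotonicity that line search provides, the geometric and combinatorial ingredients are identical to Theorem~\ref{th:afwls}.
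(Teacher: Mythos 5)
Your proposal is correct and follows essentially the same route as the paper's proof: the same smoothness-based progress bound for the uncapped short step combined with Lemma~\ref{lem:geom} and the factor-$\tfrac12$ relation between the chosen gap and the pairwise gap, the same $h_{t+1}\leq h_t/2$ argument for the capped FW step, the same monotonicity claim for drop steps, and the same active-set counting to discard at most $t/2$ non-contracting iterations. The only difference is cosmetic — you treat the two uncapped cases uniformly via a generic direction $d_t$, whereas the paper writes them out separately.
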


\begin{proof}
Let $t\in\llbracket0,T-1\rrbracket$ and denote $\epsilon_t\coloneqq f(x_t)-\min_\mathcal{P}f$. By geometric strong convexity (Lemma~\ref{lem:geom}),
\begin{align}
\epsilon_t\leq\frac{\langle\nabla f(x_t),v_t^\text{away}-v_t^\text{FW}\rangle^2}{2SW^2}.\label{afw:geom}
\end{align}
Furthermore, since
\begin{align*}
\langle\nabla f(x_t),v_t^\text{away}-v_t^\text{FW}\rangle 
=\langle\nabla f(x_t),v_t^\text{away}-x_t\rangle+\langle\nabla f(x_t),x_t-v_t^\text{FW}\rangle
\geq0
\end{align*}
we have
\begin{align}
 \max\{\langle\nabla f(x_t), v_t^\text{away}-x_t\rangle,\langle\nabla f(x_t),x_t-v_t^\text{FW}\rangle\}
 \geq\frac{\langle\nabla f(x_t), v_t^\text{away}-v_t^\text{FW}\rangle}{2}
 \geq0.\label{afw:max}
\end{align}
Note that AFW performs a step corresponding to $\max\{\langle\nabla f(x_t), v_t^\text{away}-x_t\rangle,\langle\nabla f(x_t),x_t-v_t^\text{FW}\rangle\}$ (Lines~\ref{afw:criterion} and~\ref{afw:criterion2}).
    
\emph{FW step.} In the case where the algorithm performs a FW step, we have $\gamma_t\leftarrow\min\{\langle\nabla f(x_t),x_t-v_t^\text{FW}\rangle/(L\|x_t-v_t^\text{FW}\|_2^2),1\}$, $x_{t+1}\leftarrow x_t+\gamma_t(v_t^\text{FW}-x_t)$, and by Line~\ref{afw:criterion} and~\eqref{afw:max},
\begin{align}
 \langle\nabla f(x_t), x_t -v_t^\text{FW}\rangle
 \geq\frac{\langle\nabla f(x_t), v_t^\text{away}-v_t^\text{FW}\rangle}{2}
 \geq0.\label{afw:fwgeq}
\end{align}
By smoothness of $f$,
\begin{align}
 \epsilon_{t+1}
 \leq\epsilon_t+\gamma_t\langle\nabla f(x_t),v_t^\text{FW}-x_t\rangle+\frac{L}{2}\gamma_t^2\|v_t^\text{FW}-x_t\|_2^2.\label{afw:fwsmooth}
\end{align}
Consider the choice of step-size~\eqref{afw:step} and suppose $\gamma_t<1$. Then, with~\eqref{afw:fwgeq} and~\eqref{afw:geom},
\begin{align*}
 \epsilon_{t+1}
 &\leq\epsilon_t-\frac{\langle\nabla f(x_t),x_t-v_t^\text{FW}\rangle^2}{2L\|x_t-v_t^\text{FW}\|_2^2}\\
 &\leq\epsilon_t-\frac{\langle\nabla f(x_t),v_t^\text{away}-v_t^\text{FW}\rangle^2}{8LD^2}\\
 &\leq\left(1-\frac{S}{4L}\left(\frac{W}{D}\right)^2\right)\epsilon_t.
\end{align*}
If $\gamma_t=1$ then $\langle\nabla f(x_t),x_t-v_t^\text{FW}\rangle\geq L\|x_t-v_t^\text{FW}\|_2^2$. By~\eqref{afw:fwsmooth}, the optimality of $v_t^\text{FW}$ (Line~\ref{afw:vfw}), and the convexity of $f$,
\begin{align*}
 \epsilon_{t+1}
 &\leq\epsilon_t+\langle\nabla f(x_t),v_t^\text{FW}-x_t\rangle+\frac{L}{2}\|v_t^\text{FW}-x_t\|_2^2\\
 &\leq\epsilon_t+\frac{\langle\nabla f(x_t),v_t^\text{FW}-x_t\rangle}{2}\\
 &\leq\epsilon_t+\frac{\langle\nabla f(x_t),x^*-x_t\rangle}{2}\\
 &\leq\frac{\epsilon_t}{2}.
\end{align*}
Therefore, the progress obtain by a FW step is
\begin{align}
 \epsilon_{t+1}
 &\leq\left(1-\min\left\{\frac{1}{2},\frac{S}{4L}\left(\frac{W}{D}\right)^2\right\}\right)\epsilon_t\nonumber\\
 &=\left(1-\frac{S}{4L}\left(\frac{W}{D}\right)^2\right)\epsilon_t\label{afw:fwprogress}
\end{align}
since $S\leq L$ and $W\leq D$.

\emph{Away step.} In the case where the algorithm performs an away step, we have $\gamma_t\leftarrow\min\{\langle\nabla f(x_t),v_t^\text{away}-x_t\rangle/(L\|v_t^\text{away}-x_t\|_2^2),\gamma_{\max}\}$, $x_{t+1}\leftarrow x_t+\gamma_t(x_t-v_t^\text{away})$, and by Line~\ref{afw:criterion} and~\eqref{afw:max},
\begin{align}
\langle\nabla f(x_t), v_t^\text{away}-x_t\rangle 
\geq\frac{\langle\nabla f(x_t), v_t^\text{away}-v_t^\text{FW}\rangle}{2}
\geq0.\label{afw:awaygeq}
\end{align}
By smoothness of $f$,
\begin{align}
 \epsilon_{t+1}
 \leq\epsilon_t+\gamma_t\langle\nabla f(x_t),x_t-v_t^\text{away}\rangle+\frac{L}{2}\gamma_t^2\|x_t-v_t^\text{away}\|_2^2\label{afw:awaysmooth}
\end{align}
Consider the choice of step-size~\eqref{afw:step} and suppose $\gamma_t<\gamma_{\max}$. Then, with~\eqref{afw:awaygeq} and~\eqref{afw:geom},
\begin{align}
 \epsilon_{t+1}
 &\leq\epsilon_t-\frac{\langle\nabla f(x_t),v_t^\text{away}-x_t\rangle^2}{2L\|v_t^\text{away}-x_t\|_2^2}\nonumber\\
 &\leq\epsilon_t-\frac{\langle\nabla f(x_t),v_t^\text{away}-v_t^\text{FW}\rangle^2}{8LD^2}\nonumber\\
 &\leq\left(1-\frac{S}{4L}\left(\frac{W}{D}\right)^2\right)\epsilon_t.\label{afw:awayprogress}
\end{align}
If $\gamma_t=\gamma_{\max}$ then $\gamma_{\max}\leq\gamma_t^*\coloneqq\langle\nabla f(x_t),v_t^\text{away}-x_t\rangle/(L\|v_t^\text{away}-x_t\|_2^2)$. Let $\varphi_t:\gamma\in\left[0,\gamma_t^*\right]\mapsto\gamma\langle\nabla f(x_t),x_t-v_t^\text{away}\rangle+L\gamma^2\|x_t-v_t^\text{away}\|_2^2/2$. By~\eqref{afw:awaysmooth},
\begin{align*}
 \epsilon_{t+1}
 \leq\epsilon_t+\varphi_t(\gamma_{\max}).
\end{align*}
The quadratic function $\varphi_t$ attains its unique global minimum at $\gamma=\gamma_t^*$ and satisfies $\varphi_t(\gamma_t^*)=-\langle\nabla f(x_t),x_t-v_t^\text{away}\rangle^2/(2L\|x_t-v_t^\text{away}\|_2^2)\leq0$ and $\varphi_t(0)=0$. Since $\gamma_{\max}\in\left[0,\gamma_t^*\right]$, we have $\varphi_t(\gamma_{\max})\leq0$ so 
\begin{align}
 \epsilon_{t+1}\leq\epsilon_t\label{afw:zero}
\end{align}
which shows that the progress is always nonnegative.

\emph{Wrapping up}. Since the steps with progress~\eqref{afw:zero} always decrease the number of vertices in the active set $\mathcal{S}_t$, there are at most $\floor{t/2}$ after $t$ iterations. By~\eqref{afw:fwprogress} and~\eqref{afw:awayprogress}, we conclude that
\begin{align*}
\epsilon_t
\leq\left(1-\frac{S}{4L}\left(\frac{W}{D}\right)^2\right)^{\ceil{t/2}}\epsilon_0
\leq\left(1-\frac{S}{4L}\left(\frac{W}{D}\right)^2\right)^{t/2}\epsilon_0.
\end{align*}
\end{proof}

\clearpage
\section{Proofs}
\label{apx:proofs}

\subsection{Preliminaries}

\begin{fact}[(Fact~\ref{fact:mu})]
Let $f:\mathcal{H}\rightarrow\mathbb{R}$ be $S$-strongly convex. Then $f$ is $S$-gradient dominated.
\end{fact}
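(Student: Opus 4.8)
The plan is to derive the defining inequality of $S$-gradient domination directly from the $S$-strong convexity lower bound, by converting that pointwise lower bound on $f$ into a lower bound on the global minimum $\min_\mathcal{H}f$. First I would dispatch the existence requirement in Definition~\ref{def:pl}: an $S$-strongly convex function is coercive, since its quadratic lower bound tends to $+\infty$, so on the Euclidean space $\mathcal{H}$ it attains its infimum (and the minimizer is in fact unique). This secures $\argmin_\mathcal{H}f\neq\varnothing$.

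The core of the argument is the following. Fix $x\in\mathcal{H}$ and apply the strong convexity inequality with base point $x$ against an arbitrary $y$:
\begin{align*}
 f(y)\geq f(x)+\langle\nabla f(x),y-x\rangle+\frac{S}{2}\|y-x\|^2.
\end{align*}
The right-hand side is a strictly convex quadratic in $y$, minimized at $y=x-\nabla f(x)/S$, where it attains the value $f(x)-\|\nabla f(x)\|^2/(2S)$. Since $f$ dominates this quadratic pointwise, its infimum over $\mathcal{H}$ is at least the infimum of the quadratic, so
\begin{align*}
 \min_\mathcal{H}f\geq f(x)-\frac{\|\nabla f(x)\|^2}{2S}.
\end{align*}

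Rearranging gives $f(x)-\min_\mathcal{H}f\leq\|\nabla f(x)\|^2/(2S)$, which is exactly the $S$-gradient domination bound, and this holds for every $x\in\mathcal{H}$. There is essentially no obstacle here: the only two points requiring a word of care are the existence of the minimizer (handled by coercivity) and the passage from the pointwise inequality $f\geq q$ to $\inf f\geq\inf q$, which is immediate because $f(y)\geq q(y)\geq\inf q$ for all $y$. No property of $f$ beyond $S$-strong convexity is needed, and in particular the value $\mu=S$ in the gradient domination constant comes out directly from the curvature $S$ appearing in the minimized quadratic.
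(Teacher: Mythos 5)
Your proof is correct and follows essentially the same route as the paper: apply the strong convexity lower bound at base point $x$, minimize the quadratic right-hand side at $y=x-\nabla f(x)/S$, and conclude $\min_\mathcal{H}f\geq f(x)-\|\nabla f(x)\|^2/(2S)$. Your extra remark justifying $\argmin_\mathcal{H}f\neq\varnothing$ via coercivity is a welcome bit of care that the paper's proof leaves implicit.
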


\begin{proof}
 The function $f$ is strongly convex hence it has a unique minimizer, which we denote by $x^*\in\mathcal{H}$. Let $x\in\mathcal{H}$. By strong convexity, for all $y\in\mathcal{H}$ we have
 \begin{align*}
  f(y)\geq f(x)+\langle\nabla f(x),y-x\rangle+\frac{S}{2}\|y-x\|^2.
 \end{align*}
 Now we minimize both sides with respect to $y\in\mathcal{H}$. The left-hand side is minimized for $y=x^*$ and the right-hand side is minimized for $y=x-\nabla f(x)/S$. Thus,
 \begin{align*}
  f(x^*)\geq f(x)-\frac{\|\nabla f(x)\|^2}{2S}
 \end{align*}
 i.e.,
 \begin{align*}
  f(x)-\min_\mathcal{H}f\leq\frac{\|\nabla f(x)\|^2}{2S}.
 \end{align*}
\end{proof}

\subsection{Boosting via gradient pursuit}
\label{apx:boosting}

\begin{proposition}[(Proposition~\ref{prop})]
Let $t\in\llbracket0,T-1\rrbracket$ and suppose that $x_t\in\mathcal{C}$. Then:
 \begin{enumerate}[label=(\roman*)]
  \item\label{proof:prop1} $d_{1}$ is defined and $K_t\geq1$,
  \item\label{proof:prop2} $\lambda_0,\ldots,\lambda_{K_t-1}\geq0$,
  \item\label{proof:prop3} $d_k\in\operatorname{cone}(\mathcal{V}-x_t)$ for all $k\in\llbracket0,K_t\rrbracket$,
  \item\label{proof:prop4} $x_t+g_t\in\mathcal{C}$ and $x_{t+1}\in\mathcal{C}$,
  \item $\operatorname{align}(-\nabla f(x_t),g_t)\geq\operatorname{align}(-\nabla f(x_t),v_t-x_t)+(K_t-1)\delta$ where $v_t\in\argmin_{v\in\mathcal{V}}\langle\nabla f(x_t),v\rangle$ and $\operatorname{align}(-\nabla f(x_t),v_t-x_t)\geq0$.
 \end{enumerate}
 Since $x_0\in\mathcal{C}$, these properties are satisfied for all $t\in\llbracket0,T-1\rrbracket$ by induction.
\end{proposition}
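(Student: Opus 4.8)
The plan is to fix an iteration index $t$ and argue the five claims together by induction, under the induction hypothesis that $x_t\in\mathcal{C}$; the base case is immediate since $x_0\in\argmin_{v\in\mathcal{V}}\langle\nabla f(y),v\rangle\subseteq\mathcal{V}\subseteq\mathcal{C}$. Several of the claims themselves call for a secondary induction on the round index $k$ of the inner loop. For claim (i), I would note that at $k=0$ we have $d_0=0$, so the only admissible direction is $u_0=v_0-x_t$, and $v_0=\argmax_v\langle-\nabla f(x_t),v\rangle=v_t$. Because $x_t\in\mathcal{C}=\operatorname{conv}(\mathcal{V})$, optimality of $v_t$ gives $\langle\nabla f(x_t),x_t-v_t\rangle\geq0$, whence $\lambda_0\geq0$ and $\operatorname{align}(-\nabla f(x_t),d_0')\geq0$; since $\operatorname{align}(-\nabla f(x_t),d_0)=-1$ by~\eqref{angle}, the alignment gain at round $0$ is at least $1>\delta$, so the test in Line~\ref{criterion} passes, $d_1$ is defined, and $K_t\geq1$. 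The same computation supplies $\operatorname{align}(-\nabla f(x_t),v_t-x_t)\geq0$ needed in (v).

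For (ii), the point is that $u_k$ maximizes $\langle r_k,\cdot\rangle$ over $\{v_k-x_t,\,-d_k/\|d_k\|\}$, and $v_k=\argmax_v\langle r_k,v\rangle$ together with $x_t\in\mathcal{C}$ forces $\langle r_k,v_k-x_t\rangle\geq0$; hence $\langle r_k,u_k\rangle\geq0$ and, the denominator being positive, $\lambda_k\geq0$. For (iii) I would induct on $k$ with $d_0=0$ as base: when $u_k=v_k-x_t$ the update adds a nonnegative multiple of $v_k-x_t\in\mathcal{V}-x_t$ and stays in the cone. The delicate branch is $u_k=-d_k/\|d_k\|$ (possible only for $k\geq1$, where $d_k\neq0$), where $d_{k+1}=(1-\lambda_k/\|d_k\|)d_k$ and I must show $1-\lambda_k/\|d_k\|\geq0$. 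Expanding $\lambda_k=\langle r_k,-d_k/\|d_k\|\rangle$ with $r_k=-\nabla f(x_t)-d_k$ reduces this to $\langle\nabla f(x_t),d_k\rangle\leq0$, i.e. $\operatorname{align}(-\nabla f(x_t),d_k)\geq0$. This holds because after the first successful round $\operatorname{align}(-\nabla f(x_t),d_1)=\operatorname{align}(-\nabla f(x_t),v_t-x_t)\geq0$, and the alignment is non-decreasing thereafter (each passing round raises it by at least $\delta$); this coupling between (iii) and the alignment monotonicity is the subtle point to keep straight.

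The heart of the argument is (iv), and I expect the bookkeeping of the normalizer $\Lambda_t$ to be the main obstacle. The plan is to prove, again by induction on $k$, the invariant $x_t+d_k/\Lambda^{(k)}\in\mathcal{C}$, where $\Lambda^{(k)}$ denotes the running value of $\Lambda_t$; equivalently $d_k=\Lambda^{(k)}(z_k-x_t)$ for some $z_k\in\mathcal{C}$, with $z_1=v_t$ after round $0$. In the branch $u_k=v_k-x_t$, the additive pairing $\Lambda^{(k+1)}=\Lambda^{(k)}+\lambda_k$ makes $x_t+d_{k+1}/\Lambda^{(k+1)}$ a convex combination of $z_k\in\mathcal{C}$ and $v_k\in\mathcal{V}$, hence feasible; in the branch $u_k=-d_k/\|d_k\|$, the multiplicative update $\Lambda^{(k+1)}=\Lambda^{(k)}(1-\lambda_k/\|d_k\|)$ exactly cancels the scaling of $d_k$, leaving $x_t+d_{k+1}/\Lambda^{(k+1)}=z_k\in\mathcal{C}$ unchanged (the strict positivity $1-\lambda_k/\|d_k\|>0$ used here is the inequality already verified in (iii), and guarantees $\Lambda_t>0$). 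Taking $k=K_t$ gives $g_t=d_{K_t}/\Lambda_t=z_{K_t}-x_t$, so $x_t+g_t=z_{K_t}\in\mathcal{C}$.

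Finally, $x_{t+1}=(1-\gamma_t)x_t+\gamma_t(x_t+g_t)\in\mathcal{C}$ by convexity since $\gamma_t\in[0,1]$, which both completes (iv) and furnishes $x_{t+1}\in\mathcal{C}$ to advance the outer induction. For (v), because $\Lambda_t>0$ the direction $g_t$ is a positive multiple of $d_{K_t}$, so their alignments coincide; telescoping the $K_t-1$ per-round gains (each at least $\delta$) from $d_1$ to $d_{K_t}$ against $\operatorname{align}(-\nabla f(x_t),d_1)=\operatorname{align}(-\nabla f(x_t),v_t-x_t)$ then yields $\operatorname{align}(-\nabla f(x_t),g_t)\geq\operatorname{align}(-\nabla f(x_t),v_t-x_t)+(K_t-1)\delta$, as claimed.
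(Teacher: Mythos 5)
Your proposal is correct and follows the same overall architecture as the paper's proof: the same treatment of round $k=0$ for (i) and for the nonnegativity of $\operatorname{align}(-\nabla f(x_t),v_t-x_t)$, the same optimality-of-$v_k$ argument for (ii), an induction on $k$ for (iii), the $\Lambda_t$-bookkeeping induction for (iv) (you phrase the invariant as $x_t+d_k/\Lambda^{(k)}\in\mathcal{C}$ rather than ``the coefficient sum of the algorithm's conical decomposition equals $\Lambda_t$,'' but these are the same computation), and the same telescoping of the acceptance test for (v). The one genuinely different step is the delicate branch $u_k=-d_k/\|d_k\|$ in (iii). The paper proves $1-\lambda_k/\|d_k\|\geq1/2$ via the matching-pursuit energy identity $\|r_{k+1}\|^2=\|r_k\|^2-\langle r_k,u_k\rangle^2/\|u_k\|^2\leq\|r_k\|^2$, which yields $\langle\nabla f(x_t),d_k\rangle\leq-\|d_k\|^2/2$. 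You instead observe that $1-\lambda_k/\|d_k\|=\langle-\nabla f(x_t),d_k\rangle/\|d_k\|^2$ and deduce its nonnegativity from the monotonicity of the alignment enforced by the test in Line~\ref{criterion}, anchored at $\operatorname{align}(-\nabla f(x_t),d_1)\geq0$. Both arguments are valid and non-circular (the alignment bound on the already-accepted $d_k$ is what certifies the update to $d_{k+1}$); yours is more elementary and ties the feasibility argument directly to the algorithm's stopping rule, while the paper's gives the quantitatively stronger factor $\geq1/2$, which also delivers for free the \emph{strict} positivity of $1-\lambda_k/\|d_k\|$ (hence of $\Lambda_t$) that you invoke in (iv). With your route, strictness requires the extra remark that $\operatorname{align}(-\nabla f(x_t),d_k)\geq\delta>0$ for $k\geq2$ and is strictly positive for $k=1$ whenever $\lambda_0>0$ (the degenerate case $\lambda_0=0$ means the FW gap vanishes and the procedure breaks immediately, an edge case the paper also leaves implicit); this is a one-line patch, not a gap.
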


\begin{proof}
We analyze BoostFW (Algorithm~\ref{boofw}).

 \begin{enumerate}[label=(\roman*)]
 \item We have $d_0=0$ so $r_0=-\nabla f(x_t)$ and $\operatorname{align}(r_0,d_0)=-1$ by definition~\eqref{angle}. Furthermore, since $v_0\in\argmax_{v\in\mathcal{V}}\langle r_0,v\rangle$ and $x_t\in\mathcal{C}$, we have 
 \begin{align*}
  \langle r_0,v_0-x_t\rangle
  =\langle r_0,v_0\rangle-\langle r_0,x_t\rangle
  \geq0
 \end{align*}
 so $u_0=v_0-x_t$ by Line~\ref{boofw:extra} since $d_0=0$. Thus, $\lambda_0\geq0$ (Line~\ref{boofw:lambda}) and $d_1'=\lambda_0(v_0-x_t)$ (Line~\ref{boofw:dprime}) so 
 \begin{align*}
  \operatorname{align}(-\nabla f(x_t),d_1')
  &=\frac{\langle-\nabla f(x_t),d_1'\rangle}{\|\nabla f(x_t)\|\|d_1'\|}\\
  &=\frac{\langle r_0,v_0-x_t\rangle}{\|r_0\|\|v_0-x_t\|}\\
  &\geq0\\
  &\geq-1+\delta\\
  &=\operatorname{align}(-\nabla f(x_t),d_0)+\delta.
 \end{align*}
 Therefore, by Line~\ref{criterion} the gradient pursuit procedure continues. 
 
  \item Let $k\in\llbracket0,K_t-1\rrbracket$. Since $v_k\in\argmax_{v\in\mathcal{V}}\langle r_k,v\rangle$ (Line~\ref{boofw:v}) and $x_t\in\operatorname{conv}(\mathcal{V})=\mathcal{C}$, 
  \begin{align*}
  \langle r_k,v_k-x_t\rangle
  =\max_{v\in\mathcal{V}}\langle r_k,v\rangle-\langle r_k,x_t\rangle
  \geq0.
  \end{align*}
  Thus, by Lines~\ref{boofw:extra}-\ref{boofw:lambda} we have
  \begin{align*}
   \lambda_k=\frac{\langle r_k,u_k\rangle}{\|u_k\|^2}\geq0.
  \end{align*}
  Furthermore, note that $-r_k$ is the gradient of the objective function in subproblem~\eqref{conicpb} and $\langle r_k,u_k\rangle$ is a scaled upper bound on its primal gap \citep{locatello17conic}. Thus, if $\lambda_k=0$ then the gradient pursuit procedure has already converged.
  
  \item We show by induction that $d_k\in\operatorname{cone}(\mathcal{V}-x_t)$ for all $k\in\llbracket0,K_t\rrbracket$. We have $d_0=0\in\operatorname{cone}(\mathcal{V}-x_t)$ so the base case is satisfied. Suppose that $d_k\in\operatorname{cone}(\mathcal{V}-x_t)$ for some $k\in\llbracket0,K_t-1\rrbracket$. If $u_k=v_k-x_t$ then $u_k\in\mathcal{V}-x_t$ and since $\lambda_k\geq0$ by~\ref{proof:prop2}, we have $d_{k+1}=d_k+\lambda_k(v_k-x_t)\in\operatorname{cone}(\mathcal{V}-x_t)$. Else, $u_k=-d_k/\|d_k\|$ so $d_{k+1}=(1-\lambda_k/\|d_k\|)d_k$ and it remains to show that $1-\lambda_k/\|d_k\|\geq0$. We will show that $1-\lambda_k/\|d_k\|\geq1/2$. We have
  \begin{align}
  1-\frac{\lambda_k}{\|d_k\|}\geq\frac{1}{2}
  &\Leftrightarrow\frac{1}{2}\geq\frac{\lambda_k}{\|d_k\|}=\frac{\langle r_k,-d_k/\|d_k\|\rangle}{\|d_k\|}\nonumber\\
   &\Leftrightarrow\frac{\|d_k\|^2}{2}\geq\langle r_k,-d_k\rangle.\label{prop:proof:0d}
  \end{align}
  so it suffices to show that $\|d_k\|^2/2\geq\langle r_k,-d_k\rangle$. Now, the procedure satisfies for all $k'\in\llbracket0,K_t-1\rrbracket$,
  \begin{align*}
   \|r_{k'+1}\|^2
   &=\|r_{k'}-\lambda_{k'}u_{k'}\|^2\\
   &=\|r_{k'}\|^2-2\lambda_{k'}\langle r_{k'},u_{k'}\rangle+\lambda_{k'}^2\|u_{k'}\|^2\\
   &=\|r_{k'}\|^2-\frac{\langle r_{k'},u_{k'}\rangle^2}{\|u_{k'}\|^2}\\
   &\leq\|r_{k'}\|^2
  \end{align*}
  where we used $\lambda_{k'}=\langle r_{k'},u_{k'}\rangle/\|u_{k'}\|^2$. Thus $\|r_k\|^2\leq\|r_0\|^2$, i.e., since $d_0=0$, $\|\nabla f(x_t)+d_k\|^2\leq\|\nabla f(x_t)\|^2$ so
  \begin{align*}
   \|\nabla f(x_t)\|^2
   \geq\|\nabla f(x_t)+d_k\|^2
   =\|\nabla f(x_t)\|^2+2\langle\nabla f(x_t),d_k\rangle+\|d_k\|^2
  \end{align*}
  hence
  \begin{align*}
   \langle\nabla f(x_t),d_k\rangle
   \leq-\frac{\|d_k\|^2}{2}.
  \end{align*}
  Thus,
  \begin{align*}
  \langle r_k,-d_k\rangle
  &=\langle\nabla f(x_t)+d_k,d_k\rangle\\
  &=\langle\nabla f(x_t),d_k\rangle+\|d_k\|^2\\
  &\leq\frac{\|d_k\|^2}{2}.
  \end{align*}
  Therefore, with~\eqref{prop:proof:0d} we can conclude that $d_{k+1}\in\operatorname{cone}(\mathcal{V}-x_t)$.
  
  \item By~\ref{proof:prop3}, $d_{K_t}\in\operatorname{cone}(\mathcal{V}-x_t)$ so since $g_t=d_{K_t}/\Lambda_t$, to show that $x_t+g_t\in\mathcal{C}$ it suffices to show that the sum of coefficients in the conical decomposition of $d_{K_t}$ is equal to $\Lambda_t$, and then it follows that $g_t\in\operatorname{conv}(\mathcal{V}-x_t)=\operatorname{conv}(\mathcal{V})-x_t=\mathcal{C}-x_t$. By Line~\ref{boofw:Lbd}, this is true and is verified by a simple induction on $k$: the base case is satisfied and if $u_k=v_k-x_t$ then $d_{k+1}=d_k+\lambda_k(v_k-x_t)$ and Line~\ref{boofw:Lbd} shows that $\Lambda_t\leftarrow\Lambda_t+\lambda_k$ is updated accordingly, else $u_k=-d_k/\|d_k\|$ so $d_{k+1}=(1-\lambda_k/\|d_k\|)d_k$ and Line~\ref{boofw:Lbd} shows that $\Lambda_t\leftarrow\Lambda_t(1-\lambda_k/\|d_k\|)$ is again updated accordingly. Thus, $x_t+g_t\in\mathcal{C}$. Then,
 \begin{align*}
 x_{t+1}
 &=x_t+\gamma_tg_t\\
 &=x_t+\gamma_t((x_t+g_t)-x_t)\\
 &=(1-\gamma_t)x_t+\gamma_t\underbrace{(x_t+g_t)}_{\in\mathcal{C}}.
 \end{align*}
 Since $\gamma_t\in\left[0,1\right]$, we conclude that $x_{t+1}\in\mathcal{C}$ by convex combination. 

 \item Since $d_0=0$, we have $r_0=-\nabla f(x_t)$ so by Line~\ref{boofw:v}, $v_0\in\argmin_{v\in\mathcal{V}}\langle\nabla f(x_t),v\rangle$, and we have $d_1=v_t-x_t$. Let $v_t\coloneqq v_0$. Since $g_t=d_{K_t}/\Lambda_t$ where $K_t\geq1$ by~\ref{proof:prop1}, by Line~\ref{criterion} we obtain
 \begin{align*}
  \operatorname{align}(-\nabla f(x_t),g_t)
  &=\operatorname{align}(-\nabla f(x_t),d_{K_t})\\
  &\geq\operatorname{align}(-\nabla f(x_t),v_t-x_t)+(K_t-1)\delta.
 \end{align*}
 Lastly,
 \begin{align*}
  \operatorname{align}(-\nabla f(x_t),v_t-x_t)
  &=\frac{\langle\nabla f(x_t),x_t-v_t\rangle}{\|\nabla f(x_t)\|\|x_t-v_t\|}\\
  &\geq0
 \end{align*}
 because $\langle\nabla f(x_t),x_t-v_t\rangle\geq0$ since $x_t\in\mathcal{C}$.
 \end{enumerate}
\end{proof}

\subsection{Convergence analysis}
\label{apx:cvproofs}

\begin{theorem}[(Theorem~\ref{th:boofwsc})]
 Let $f:\mathcal{H}\rightarrow\mathbb{R}$ be $L$-smooth, convex, and $\mu$-gradient dominated, and set $\gamma_t\leftarrow\min\{\eta_t\|\nabla f(x_t)\|/(L\|g_t\|),1\}$ or $\gamma_t\leftarrow\argmin_{\gamma\in\left[0,1\right]}f(x_t+\gamma g_t)$. Then for all $t\in\llbracket0,T\rrbracket$,
 \begin{align*}
  f(x_t)-\min_\mathcal{C}f
  \leq\frac{LD^2}{2}\prod_{s=0}^{t-1}\left(1-\eta_s^2\frac{\mu}{L}\right)^{\mathds{1}_{\{\gamma_s<1\}}}\left(1-\frac{\|g_s\|}{2\|v_s-x_s\|}\right)^{\mathds{1}_{\{\gamma_s=1\}}}
 \end{align*}
 where $v_s\in\argmin_{v\in\mathcal{V}}\langle\nabla f(x_s),v\rangle$ for all $s\in\llbracket0,T-1\rrbracket$.
\end{theorem}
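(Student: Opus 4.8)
The plan is to establish a per-iteration progress bound via smoothness, distinguishing the cases $\gamma_s<1$ and $\gamma_s=1$, and then telescope over $s=0,\ldots,t-1$. The key quantity is $\eta_s=\operatorname{align}(-\nabla f(x_s),g_s)$, which by Proposition~\ref{prop}\ref{prop:eta} is nonnegative, so the descent direction $g_s$ always makes a nonobtuse angle with $-\nabla f(x_s)$. First I would invoke $L$-smoothness along the feasible segment $[x_s,x_s+g_s]$ (feasibility given by Proposition~\ref{prop}\ref{prop:gx}) to write
\begin{align*}
 f(x_{s+1})
 \leq f(x_s)+\gamma_s\langle\nabla f(x_s),g_s\rangle+\frac{L}{2}\gamma_s^2\|g_s\|^2
 = f(x_s)-\gamma_s\eta_s\|\nabla f(x_s)\|\|g_s\|+\frac{L}{2}\gamma_s^2\|g_s\|^2,
\end{align*}
using $\langle-\nabla f(x_s),g_s\rangle=\eta_s\|\nabla f(x_s)\|\|g_s\|$ from the definition of $\operatorname{align}$.

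\textbf{Case $\gamma_s<1$.} Here the short-step formula gives $\gamma_s=\eta_s\|\nabla f(x_s)\|/(L\|g_s\|)$ exactly, and substituting this into the smoothness bound collapses the right-hand side to $f(x_s)-\eta_s^2\|\nabla f(x_s)\|^2/(2L)$, so the one-step decrease is $\eta_s^2\|\nabla f(x_s)\|^2/(2L)$. Now I apply the $\mu$-gradient-dominated inequality in the form $\|\nabla f(x_s)\|^2\geq 2\mu(f(x_s)-\min_\mathcal{C}f)$ (valid on $\mathcal{C}$ as noted after Definition~\ref{def:pl}) to convert the gradient-norm decrease into a multiplicative contraction of the primal gap:
\begin{align*}
 f(x_{s+1})-\min_\mathcal{C}f
 \leq\left(1-\eta_s^2\frac{\mu}{L}\right)\left(f(x_s)-\min_\mathcal{C}f\right),
\end{align*}
which is exactly the $\gamma_s<1$ factor in the claimed product. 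For the line-search variant, the minimizing step over $[0,1]$ can only do at least as well as the short step, so the same bound holds.

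\textbf{Case $\gamma_s=1$.} Now $\eta_s\|\nabla f(x_s)\|\geq L\|g_s\|$, i.e.\ the unconstrained short step would exceed $1$. Plugging $\gamma_s=1$ into the smoothness bound and using this threshold inequality to lower-bound $\eta_s\|\nabla f(x_s)\|\|g_s\|$ by $L\|g_s\|^2$ yields $f(x_{s+1})\leq f(x_s)-\tfrac{L}{2}\|g_s\|^2$. To turn this absolute decrease into the claimed multiplicative factor $1-\|g_s\|/(2\|v_s-x_s\|)$, I would use convexity together with the FW-type lower bound on the primal gap: since $v_s$ minimizes the linear form, $f(x_s)-\min_\mathcal{C}f\leq\langle\nabla f(x_s),x_s-v_s\rangle\leq\|\nabla f(x_s)\|\|v_s-x_s\|$, and combine it with $\eta_s\|\nabla f(x_s)\|\geq L\|g_s\|$ to bound $L\|g_s\|^2$ from below by $\|g_s\|/\|v_s-x_s\|$ times the primal gap (up to the factor $\tfrac12$, and using $\eta_s\leq 1$). \textbf{The main obstacle} I anticipate is exactly this $\gamma_s=1$ bookkeeping: relating the quadratic decrease $\tfrac{L}{2}\|g_s\|^2$ to the primal gap requires carefully chaining the threshold $L\|g_s\|\leq\eta_s\|\nabla f(x_s)\|$, the bound $\eta_s\leq1$, and the FW gap inequality so that the ratio $\|g_s\|/\|v_s-x_s\|$ emerges cleanly with the right constant. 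Once both factors are in hand, I would telescope the product from $s=0$ to $t-1$ and bound the initial gap by $f(x_0)-\min_\mathcal{C}f\leq\tfrac{L}{2}\|x_0-x^*\|^2\leq\tfrac{LD^2}{2}$ via smoothness at the minimizer, giving the stated bound.
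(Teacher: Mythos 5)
Your $\gamma_s<1$ case, the line-search reduction, and the telescoping are exactly the paper's argument and are fine. The genuine gap is in the $\gamma_s=1$ case, and it is precisely the step you flagged as the "main obstacle": the chain you propose does not close. After weakening the smoothness bound to $f(x_{s+1})\leq f(x_s)-\tfrac{L}{2}\|g_s\|^2$, you would need $L\|g_s\|\geq\epsilon_s/\|v_s-x_s\|$, but the threshold condition $\gamma_s=1$ gives $L\|g_s\|\leq\eta_s\|\nabla f(x_s)\|$ — an \emph{upper} bound on $L\|g_s\|$ — and $\eta_s\leq1$ likewise points the wrong way, so neither can produce the required lower bound; indeed the decrease $\tfrac{L}{2}\|g_s\|^2$ alone is too weak to yield the factor $1-\|g_s\|/(2\|v_s-x_s\|)$. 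The fix is to keep the stronger decrease $\tfrac{1}{2}\langle-\nabla f(x_s),g_s\rangle=\tfrac12\eta_s\|\nabla f(x_s)\|\|g_s\|$ (which dominates $\tfrac{L}{2}\|g_s\|^2$ exactly when $\gamma_s=1$) and then invoke the ingredient missing from your sketch: the alignment monotonicity of the pursuit procedure (Line~\ref{criterion}, i.e.\ Proposition~\ref{prop}\ref{prop:eta}), which gives
\begin{align*}
\frac{\langle-\nabla f(x_s),g_s\rangle}{\|g_s\|}\geq\frac{\langle-\nabla f(x_s),v_s-x_s\rangle}{\|v_s-x_s\|}\geq\frac{\epsilon_s}{\|v_s-x_s\|},
\end{align*}
the last step by convexity and optimality of $v_s$. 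Multiplying by $\|g_s\|/2$ then produces the claimed contraction factor. Without this alignment comparison between $g_s$ and the FW direction $v_s-x_s$ there is no way to make $\|v_s-x_s\|$ appear.

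A secondary issue: the initial bound $\epsilon_0\leq\tfrac{L}{2}\|x_0-x^*\|^2$ "via smoothness at the minimizer" is not valid here, since $x^*$ is a \emph{constrained} minimizer and $\nabla f(x^*)$ need not vanish (take $f$ linear on a segment for a counterexample). The paper instead expands smoothness at the input point $y$ and uses the specific choice $x_0\in\argmin_{v\in\mathcal{V}}\langle\nabla f(y),v\rangle$ (Line~\ref{boofw:x0}) to replace $\langle\nabla f(y),x_0-y\rangle$ by $\langle\nabla f(y),x^*-y\rangle$, then applies convexity to obtain $\epsilon_0\leq LD^2/2$.
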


\begin{proof}
 Let $\epsilon_t\coloneqq f(x_t)-\min_\mathcal{C}f$ for all $t\in\llbracket0,T\rrbracket$. We will prove the theorem for the step-size strategy $\gamma_t\leftarrow\min\{\eta_t\|\nabla f(x_t)\|/(L\|g_t\|),1\}$. The line search strategy follows since it achieves at least the same progress at every iteration. Let $t\in\llbracket0,T-1\rrbracket$. We have
 \begin{align}
  \eta_t
  &=\operatorname{align}(-\nabla f(x_t),g_t)\nonumber\\
  &=\frac{\langle-\nabla f(x_t),g_t\rangle}{\|\nabla f(x_t)\|\|g_t\|}.\label{proof:eta}
 \end{align}
 Suppose that $\gamma_t=\eta_t\|\nabla f(x_t)\|/(L\|g_t\|)$. Then since $f$ is $L$-smooth and is $\mu$-gradient dominated,
 \begin{align}
  \epsilon_{t+1}
  &\leq\epsilon_t+\gamma_t\langle\nabla f(x_t),g_t\rangle+\frac{L}{2}\gamma_t^2\|g_t\|^2\nonumber\\
  &=\epsilon_t-\gamma_t\eta_t\|\nabla f(x_t)\|\|g_t\|+\frac{L}{2}\gamma_t^2\|g_t\|^2\nonumber\\
  &=\epsilon_t-\eta_t^2\frac{\|\nabla f(x_t)\|^2}{2L}\nonumber\\
  &\leq\epsilon_t-\eta_t^2\frac{2\mu\epsilon_t}{2L}\nonumber\\
  &=\left(1-\eta_t^2\frac{\mu}{L}\right)\epsilon_t.\label{proof:g-1}
 \end{align}
 Else, $\eta_t\|\nabla f(x_t)\|/(L\|g_t\|)>1$ and $\gamma_t=1$. By~\eqref{proof:eta},
 \begin{align*}
  1
  &<\frac{\eta_t\|\nabla f(x_t)\|}{L\|g_t\|}\\
  &=\frac{\langle-\nabla f(x_t),g_t\rangle}{L\|g_t\|^2}
 \end{align*}
 so
 \begin{align*}
  L\|g_t\|^2<\langle-\nabla f(x_t),g_t\rangle.
 \end{align*}
 Hence, 
 \begin{align}
  \epsilon_{t+1}
  &\leq\epsilon_t+\gamma_t\langle\nabla f(x_t),g_t\rangle+\frac{L}{2}\gamma_t^2\|g_t\|^2\nonumber\\
  &=\epsilon_t+\langle\nabla f(x_t),g_t\rangle+\frac{L}{2}\|g_t\|^2\nonumber\\
  &<\epsilon_t+\frac{\langle\nabla f(x_t),g_t\rangle}{2}.\label{proof:gamma1}
 \end{align}
 Recall that $g_t=d_{K_t}/\Lambda_t$ and $d_1=\lambda_0(v_0-x_t)$ where $v_0\in\argmin_{v\in\mathcal{V}}\langle\nabla f(x_t),v\rangle$. If $K_t=1$ then $d_{K_t}=d_1$, else $\operatorname{align}(-\nabla f(x_t),d_{K_t})>\operatorname{align}(-\nabla f(x_t),d_1)$ by the condition in Line~\ref{criterion}. In both cases, we obtain
 \begin{align}
  \frac{\langle-\nabla f(x_t),g_t\rangle}{\|g_t\|}
  \geq\frac{\langle-\nabla f(x_t),v_0-x_t\rangle}{\|v_0-x_t\|}.\label{proof:etad}
 \end{align}
 Let $x^*\in\argmin_\mathcal{C}f$. By convexity and optimality of $v_0$,
 \begin{align}
  \epsilon_t
  &=f(x_t)-f(x^*)\nonumber\\
  &\leq\langle\nabla f(x_t),x_t-x^*\rangle\nonumber\\
  &\leq\langle\nabla f(x_t),x_t-v_0\rangle.\label{proof:v0}
 \end{align}
 Thus, with~\eqref{proof:gamma1} and~\eqref{proof:etad} we have
 \begin{align*}
  \epsilon_{t+1}<\left(1-\frac{\|g_t\|}{2\|v_0-x_t\|}\right)\epsilon_t.
 \end{align*}
 Therefore, together with~\eqref{proof:g-1} we conclude that for all $t\in\llbracket0,T\rrbracket$, 
 \begin{align*}
 \epsilon_t
 &\leq\epsilon_0\prod_{s=0}^{t-1}\left(1-\eta_s^2\frac{\mu}{L}\right)^{\mathds{1}_{\{\gamma_s<1\}}}\left(1-\frac{\|g_s\|}{2\|v_s-x_s\|}\right)^{\mathds{1}_{\{\gamma_s=1\}}}
 \end{align*}
 where $v_s\in\argmin_{v\in\mathcal{V}}\langle\nabla f(x_s),v\rangle$ for all $s\in\llbracket0,T-1\rrbracket$. We conclude by using the smoothness of $f$, the definition of $x_0$ (Line~\ref{boofw:x0}), and the convexity of $f$:
 \begin{align*}
  \epsilon_0
  &=f(x_0)-f(x^*)\\
  &\leq f(y)+\langle\nabla f(y),x_0-y\rangle+\frac{L}{2}\|x_0-y\|^2-f(x^*)\\
  &\leq f(y)+\langle\nabla f(y),x^*-y\rangle+\frac{LD^2}{2}-f(x^*)\\
  &\leq f(x^*)+\frac{LD^2}{2}-f(x^*)\\
  &=\frac{LD^2}{2}.
 \end{align*}
\end{proof}

\begin{theorem}[(Theorem~\ref{th:1/t})]
 Let $f:\mathcal{H}\rightarrow\mathbb{R}$ be $L$-smooth, convex, and $\mu$-gradient dominated, and set $\gamma_t\leftarrow\min\{\eta_t\|\nabla f(x_t)\|/(L\|g_t\|),1\}$ or $\gamma_t\leftarrow\argmin_{\gamma\in\left[0,1\right]}f(x_t+\gamma g_t)$. Consider Algorithm~\ref{boofw} with the minor adjustment $x_{t+1}\leftarrow x_t+\gamma_t'(v_t-x_t)$ in Line~\ref{update} when $\gamma_t=1$, where $v_t\leftarrow v_{k=0}$ is computed in Line~\ref{boofw:v} and $\gamma_t'\leftarrow\min\{\langle\nabla f(x_t),x_t-v_t\rangle/(L\|x_t-v_t\|^2),1\}$ or $\gamma_t'\leftarrow\argmin_{\gamma\in\left[0,1\right]}f(x_t+\gamma(v_t-x_t))$. Then for all $t\in\llbracket0,T\rrbracket$,
 \begin{align*}
  f(x_t)-\min_\mathcal{C}f
  \leq\frac{4LD^2}{t+2}.
 \end{align*}
\end{theorem}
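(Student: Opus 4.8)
The plan is to reduce both the boosting steps and the adjusted Frank-Wolfe steps to a common pair of recursions on the primal gap $\epsilon_t\coloneqq f(x_t)-\min_\mathcal{C}f$, and then close with the standard $\mathcal{O}(1/t)$ induction. Concretely, I would show that at every iteration $t$ one of the two inequalities
\[
 \epsilon_{t+1}\leq\epsilon_t-\frac{\epsilon_t^2}{2LD^2}
 \qquad\text{or}\qquad
 \epsilon_{t+1}\leq\frac{\epsilon_t}{2}
\]
holds, regardless of whether the iteration performs a boosting step (the generic case $\gamma_t<1$) or the minor Frank-Wolfe adjustment (the case $\gamma_t=1$). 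Setting $C\coloneqq 4LD^2$, the target bound is exactly $\epsilon_t\leq C/(t+2)$, and the factor $4$ (rather than the sharper $2$ of plain Frank-Wolfe) is the slack that absorbs the occasional adjustment step.

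For a boosting step with $\gamma_t<1$, I would start from the short-step progress already derived in the proof of Theorem~\ref{th:boofwsc}, namely $\epsilon_{t+1}\leq\epsilon_t-\eta_t^2\|\nabla f(x_t)\|^2/(2L)$. The key is to lower bound $\eta_t\|\nabla f(x_t)\|=\langle-\nabla f(x_t),g_t\rangle/\|g_t\|$ by $\epsilon_t/D$: inequality~\eqref{proof:etad} gives $\langle-\nabla f(x_t),g_t\rangle/\|g_t\|\geq\langle\nabla f(x_t),x_t-v_0\rangle/\|x_t-v_0\|$ where $v_0\in\argmin_{v\in\mathcal{V}}\langle\nabla f(x_t),v\rangle$, the convexity bound~\eqref{proof:v0} gives $\langle\nabla f(x_t),x_t-v_0\rangle\geq\epsilon_t$, and $\|x_t-v_0\|\leq D$ completes the estimate. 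Squaring yields the first recursion. For the adjusted step with $\gamma_t=1$, the update is a plain short-step Frank-Wolfe move along $v_t-x_t$, so I would invoke the two short-step inequalities recalled in Section~\ref{sec:fw}: if $\gamma_t'<1$ the progress is $\langle\nabla f(x_t),x_t-v_t\rangle^2/(2L\|x_t-v_t\|^2)$, which again dominates $\epsilon_t^2/(2LD^2)$ by the gap bound $\langle\nabla f(x_t),x_t-v_t\rangle\geq\epsilon_t$ and $\|x_t-v_t\|\leq D$; if $\gamma_t'=1$ it is $\epsilon_t/2$. Thus every iteration satisfies one of the two recursions.

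To close, I would run an induction on $t$ against $\epsilon_t\leq C/(t+2)$. The base case uses $\epsilon_0\leq LD^2/2=C/8$, established at the end of the proof of Theorem~\ref{th:boofwsc}; since both recursions are non-increasing in $\epsilon$, monotonicity gives $\epsilon_t\leq C/8<C/4$ for all $t$. The halving case is immediate, as $\epsilon_{t+1}\leq\epsilon_t/2\leq C/(2(t+2))\leq C/(t+3)$. For the quadratic recursion I would use that $h(\epsilon)\coloneqq\epsilon-2\epsilon^2/C$ is increasing on $[0,C/4]$ with maximum value $C/8$: for $t\geq2$ we have $\epsilon_t\leq C/(t+2)\leq C/4$, so $\epsilon_{t+1}\leq h(C/(t+2))=Ct/(t+2)^2\leq C/(t+3)$ (equivalently $t(t+3)\leq(t+2)^2$), while for $t\in\{0,1\}$ the bound $\epsilon_{t+1}\leq h(\epsilon_t)\leq C/8\leq C/(t+3)$ suffices.

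The main obstacle is the unification in the second paragraph: one must verify that the adjusted $\gamma_t=1$ branch genuinely produces the same $\mathcal{O}(\epsilon_t^2/(LD^2))$ (or halving) progress as a boosting step, so that a single recursion governs all iterations. The induction itself is routine, provided one is careful that the quadratic map $h$ is only monotone on $[0,C/4]$ and treats the first two iterates separately via $h_{\max}=C/8$.
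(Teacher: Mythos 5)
Your proposal is correct and follows essentially the same route as the paper: the same per-iteration progress bounds (the quadratic recursion $\epsilon_{t+1}\leq\epsilon_t(1-\epsilon_t/(2LD^2))$ via the alignment lower bound and the gap inequality $\epsilon_t\leq\langle\nabla f(x_t),x_t-v_t\rangle$ for the $\gamma_t<1$ and $\gamma_t'<1$ branches, halving for the $\gamma_t=\gamma_t'=1$ branch), the same base case $\epsilon_0\leq LD^2/2$, and the same closing induction. The only cosmetic difference is that you close the induction via monotonicity of $h(\epsilon)=\epsilon-2\epsilon^2/C$ on $[0,C/4]$ with the first two iterates handled by $h_{\max}=C/8$, whereas the paper splits on whether $\epsilon_t\leq2LD^2/(t+2)$; both yield the identical estimate $Ct/(t+2)^2\leq C/(t+3)$.
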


\begin{proof}
 We consider the line search-free strategies; the line search strategies follow since they achieve at least the same progress at every iteration. Let $\epsilon_t\coloneqq f(x_t)-\min_\mathcal{C}f$ for all $t\in\llbracket0,T\rrbracket$. We will show by induction that for all $t\in\llbracket0,T\rrbracket$,
 \begin{align}
  \epsilon_t\leq\frac{4LD^2}{t+2}.\label{proof:1/t}
 \end{align}
 For the base case $t=0$, let $x^*\in\argmin_\mathcal{C}f$. By smoothness of $f$, the definition of $x_0$ (Line~\ref{boofw:x0}), and the convexity of $f$, we have
 \begin{align*}
  \epsilon_0
  &=f(x_0)-f(x^*)\\
  &\leq f(y)+\langle\nabla f(y),x_0-y\rangle+\frac{L}{2}\|x_0-y\|^2-f(x^*)\\
  &\leq f(y)+\langle\nabla f(y),x^*-y\rangle+\frac{LD^2}{2}-f(x^*)\\
  &\leq f(x^*)+\frac{LD^2}{2}-f(x^*)\\
  &=\frac{LD^2}{2}
 \end{align*}
 so \eqref{proof:1/t} holds. Suppose that~\eqref{proof:1/t} holds for some $t\in\llbracket0,T-1\rrbracket$.
 
 If $\gamma_t<1$ then we can proceed as in the proof of Theorem~\ref{th:boofwsc} and obtain
 \begin{align*}
  \epsilon_{t+1}
  &\leq\epsilon_t-\eta_t^2\frac{\|\nabla f(x_t)\|^2}{2L}.
 \end{align*}
 By Proposition~\ref{prop}\ref{prop:eta}, there exists $v_t\in\argmin_{v\in\mathcal{V}}\langle\nabla f(x_t),v\rangle$ such that
 \begin{align*}
  \eta_t
  &\geq\frac{\langle\nabla f(x_t),x_t-v_t\rangle}{\|\nabla f(x_t)\|\|x_t-v_t\|}.
 \end{align*}
 By convexity of $f$ and optimality of $v_t$,
 \begin{align}
  \epsilon_t
  &=f(x_t)-f(x^*)\nonumber\\
  &\leq\langle\nabla f(x_t),x_t-x^*\rangle\nonumber\\
  &\leq\langle\nabla f(x_t),x_t-v_t\rangle.\label{proof1/t:dual}
 \end{align}
 Thus,
 \begin{align*}
  \epsilon_{t+1}
  &\leq\epsilon_t-\frac{\langle\nabla f(x_t),x_t-v_t\rangle^2}{\|\nabla f(x_t)\|^2\|x_t-v_t\|^2}\frac{\|\nabla f(x_t)\|^2}{2L}\\
  &\leq\epsilon_t-\frac{\epsilon_t^2}{D^2}\frac{1}{2L}\\
  &=\epsilon_t\left(1-\frac{\epsilon_t}{2LD^2}\right).
 \end{align*}
 If $\epsilon_t\leq2LD^2/(t+2)$, then 
 \begin{align*}
  \epsilon_{t+1}
  &\leq\epsilon_t\\
  &\leq\frac{2LD^2}{t+2}\\
  &\leq\frac{4LD^2}{t+3}.
 \end{align*}
 Else, $2LD^2/(t+2)<\epsilon_t\leq4LD^2/(t+2)$ so
 \begin{align*}
  \epsilon_{t+1}
  &\leq\epsilon_t\left(1-\frac{\epsilon_t}{2LD^2}\right)\\
  &<\frac{4LD^2}{t+2}\left(1-\frac{1}{2LD^2}\frac{2LD^2}{t+2}\right)\\
  &=\frac{4LD^2}{t+2}\left(1-\frac{1}{t+2}\right)\\
  &\leq\frac{4LD^2}{t+3}
 \end{align*}
 so~\eqref{proof:1/t} holds for $t+1$.
 
 Now consider the case $\gamma_t=1$. Then by assumption, $x_{t+1}=x_t+\gamma_t'(v_t-x_t)$. By smoothness of $f$,
 \begin{align*}
  \epsilon_{t+1}
  \leq\epsilon_t+\gamma_t'\langle\nabla f(x_t),v_t-x_t\rangle+\frac{L}{2}\gamma_t'^2\|v_t-x_t\|^2.
 \end{align*}
 If $\gamma_t'=\langle\nabla f(x_t),x_t-v_t\rangle/(L\|x_t-v_t\|^2)$ then 
 \begin{align*}
  \epsilon_{t+1}
  &\leq\epsilon_t+\gamma_t'\langle\nabla f(x_t),v_t-x_t\rangle+\frac{L}{2}\gamma_t'^2\|v_t-x_t\|^2\\
  &=\epsilon_t-\frac{\langle\nabla f(x_t),x_t-v_t\rangle^2}{2L\|x_t-v_t\|^2}\\
  &\leq\epsilon_t\left(1-\frac{\epsilon_t}{2LD^2}\right)
 \end{align*}
 where we used~\eqref{proof1/t:dual}, and we can conclude as before.

 The final case to consider is $\gamma_t=1$ and $\gamma_t'=1$. Then $\langle\nabla f(x_t),x_t-v_t\rangle/(L\|x_t-v_t\|^2)\geq1$ so
 \begin{align*}
  \epsilon_{t+1}
  &\leq\epsilon_t+\gamma_t'\langle\nabla f(x_t),v_t-x_t\rangle+\frac{L}{2}\gamma_t'^2\|v_t-x_t\|^2\\
  &=\epsilon_t+\langle\nabla f(x_t),v_t-x_t\rangle+\frac{L}{2}\|v_t-x_t\|^2\\
  &\leq\epsilon_t+\frac{\langle\nabla f(x_t),v_t-x_t\rangle}{2}\\
  &\leq\frac{\epsilon_t}{2}\\
  &\leq\frac{2LD^2}{t+2}\\
  &\leq\frac{4LD^2}{t+3}
 \end{align*}
 where we used~\eqref{proof1/t:dual}. Therefore~\eqref{proof:1/t} holds for $t+1$.
\end{proof}

\begin{theorem}[(Theorem~\ref{th:boofwgood})]
 Let $f:\mathcal{H}\rightarrow\mathbb{R}$ be $L$-smooth, convex, and $\mu$-gradient dominated, and set $\gamma_t\leftarrow\min\{\eta_t\|\nabla f(x_t)\|/(L\|g_t\|),1\}$ or $\gamma_t\leftarrow\argmin_{\gamma\in\left[0,1\right]}f(x_t+\gamma g_t)$. Assume that $|\{s\in\llbracket0,t-1\rrbracket\mid\gamma_s<1,K_s>1\}|\geq\omega t^p$ for all $t\in\llbracket 0,T-1\rrbracket$, for some $\omega>0$ and $p\in\left]0,1\right]$. Then for all $t\in\llbracket0,T\rrbracket$,
 \begin{align*}
  f(x_t)-\min_\mathcal{C}f
  \leq\frac{LD^2}{2}\exp\left(-\delta^2\frac{\mu}{L}\omega t^p\right).
 \end{align*}
\end{theorem}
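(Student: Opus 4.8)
The plan is to derive this practical rate directly from the universal rate already established in Theorem~\ref{th:boofwsc}, by discarding all but the ``good'' iterations. Recall that theorem gives
\[
\epsilon_t \leq \frac{LD^2}{2}\prod_{s=0}^{t-1}\left(1-\eta_s^2\frac{\mu}{L}\right)^{\mathds{1}_{\{\gamma_s<1\}}}\left(1-\frac{\|g_s\|}{2\|v_s-x_s\|}\right)^{\mathds{1}_{\{\gamma_s=1\}}},
\]
where $\epsilon_t\coloneqq f(x_t)-\min_\mathcal{C}f$. The first thing I would verify is that every factor in this product lies in $[0,1]$. The factor attached to a $\gamma_s=1$ iteration is manifestly $\leq 1$ since $\|g_s\|/(2\|v_s-x_s\|)\geq0$. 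For a $\gamma_s<1$ iteration the factor is $1-\eta_s^2\mu/L$; here $\eta_s\in[-1,1]$ because it is an alignment (a cosine), and $\mu\leq L$ because combining $L$-smoothness (which yields $\|\nabla f(x)\|^2/(2L)\leq f(x)-\min_\mathcal{H}f$ in the convex case) with $\mu$-gradient dominance forces $\mu\leq L$. Hence $\eta_s^2\mu/L\in[0,1]$ and every factor indeed lies in $[0,1]$.

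Since all factors are at most $1$, dropping any subset of them can only increase the product. I would therefore retain only the indices $s$ in the ``good'' set $\{s\mid\gamma_s<1,\,K_s>1\}$, whose cardinality is $N_t$, obtaining
\[
\prod_{s=0}^{t-1}(\text{factor}_s)\leq\prod_{\substack{s:\,\gamma_s<1\\K_s>1}}\left(1-\eta_s^2\frac{\mu}{L}\right).
\]
For each such retained index, Proposition~\ref{prop}\ref{prop:eta} gives $\eta_s=\operatorname{align}(-\nabla f(x_s),g_s)\geq(K_s-1)\delta\geq\delta$, using $K_s\geq2$; hence $\eta_s^2\geq\delta^2$ and each retained factor is bounded by $1-\delta^2\mu/L$. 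This collapses the product to $(1-\delta^2\mu/L)^{N_t}$.

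Finally I would apply the elementary inequality $1-x\leq e^{-x}$ to get $(1-\delta^2\mu/L)^{N_t}\leq\exp(-\delta^2(\mu/L)N_t)$, and then invoke the standing assumption $N_t\geq\omega t^p$ (together with the positivity of the constant $\delta^2\mu/L$) to conclude $\exp(-\delta^2(\mu/L)N_t)\leq\exp(-\delta^2(\mu/L)\omega t^p)$; multiplying back by $LD^2/2$ yields the claim. The only genuinely delicate point is the justification that factors may be dropped, which rests on the two sign facts $\eta_s\in[-1,1]$ and $\mu\leq L$; everything else is bookkeeping, and the per-iteration content is exactly the progress estimate $\epsilon_t-\epsilon_{t+1}\geq\delta^2\|\nabla f(x_t)\|^2/(2L)$ noted in the Remark following the theorem.
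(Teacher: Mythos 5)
Your proof is correct and follows essentially the same route as the paper's: invoke Theorem~\ref{th:boofwsc}, discard the factors outside the set $\{s\mid\gamma_s<1,K_s>1\}$, lower-bound $\eta_s\geq\delta$ on the retained factors via Proposition~\ref{prop}\ref{prop:eta}, and finish with $1-x\leq e^{-x}$ and the assumption $N_t\geq\omega t^p$. The only difference is that you explicitly justify that dropping factors can only increase the product (via $\eta_s\in[-1,1]$ and $\mu\leq L$), a point the paper leaves implicit.
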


\begin{proof}
 Let $t\in\llbracket0,T\rrbracket$ and denote $N_t\coloneqq|\{s\in\llbracket0,t-1\rrbracket\mid\gamma_s<1,K_s>1\}|$. We have $N_t\geq\omega t^p$ and, by Proposition~\ref{prop}\ref{prop:eta}, if $K_t>1$ then $\eta_t\geq\delta$. By Theorem~\ref{th:boofwsc},
 \begin{align*}
  f(x_t)-\min_\mathcal{C}f
  &\leq\frac{LD^2}{2}\prod_{s=0}^{t-1}\left(1-\eta_s^2\frac{\mu}{L}\right)^{\mathds{1}_{\{\gamma_s<1\}}}\left(1-\frac{\|g_s\|}{2\|v_s-x_s\|}\right)^{\mathds{1}_{\{\gamma_s=1\}}}\\
  &\leq\frac{LD^2}{2}\prod_{\substack{s=0\\\gamma_s<1\\K_s>1}}^{t-1}\left(1-\eta_s^2\frac{\mu}{L}\right)\\
  &\leq\frac{LD^2}{2}\left(1-\delta^2\frac{\mu}{L}\right)^{N_t}\\
  &\leq\frac{LD^2}{2}\exp\left(-\delta^2\frac{\mu}{L}N_t\right)\\
  &\leq\frac{LD^2}{2}\exp\left(-\delta^2\frac{\mu}{L}\omega t^p\right).
 \end{align*}
\end{proof}

\begin{corollary}[(Corollary~\ref{cor:lmo})]
 In order to achieve $\epsilon$-convergence, the number of linear minimizations performed over $\mathcal{C}$ is 
 \begin{align*}
 \begin{cases}
  \displaystyle\mathcal{O}\left(\frac{LD^2\min\{K,1/\delta\}}{\epsilon}\right)&\text{in the worst-case scenario}\\
  \displaystyle\mathcal{O}\left(\min\left\{K,\frac{1}{\delta}\right\}\left(\frac{1}{\omega\delta^{2}}\frac{L}{\mu}\ln\left(\frac{1}{\epsilon}\right)\right)^{1/p}\right)&\text{in the practical scenario}.
  \end{cases}
 \end{align*}
\end{corollary}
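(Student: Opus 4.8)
The plan is to write the total number of FW oracle calls as the product of the per-iteration cost and the number of iterations needed to drive the primal gap below $\epsilon$, and then to invert the two convergence rates already established. Since every pursuit round issues exactly one linear minimization (Line~\ref{boofw:v}), iteration $t$ costs $K_t$ oracle calls, so the total is $\sum_t K_t \leq (\max_t K_t)\cdot(\text{number of iterations})$. Thus the whole corollary reduces to bounding these two factors separately.

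First I would bound the per-iteration cost $K_t$, where two caps apply simultaneously. Trivially $K_t \leq K$ by the loop bound. For the $\delta$-dependent cap, I would invoke Proposition~\ref{prop}\ref{prop:eta}, which gives
\[
 \operatorname{align}(-\nabla f(x_t),g_t)\geq\operatorname{align}(-\nabla f(x_t),v_t-x_t)+(K_t-1)\delta\geq(K_t-1)\delta,
\]
together with the fact that the alignment is a cosine and hence at most $1$. This forces $(K_t-1)\delta\leq1$, i.e.\ $K_t\leq1+1/\delta$. Combining both caps yields $K_t=\mathcal{O}(\min\{K,1/\delta\})$ uniformly in $t$, which is exactly the per-iteration factor appearing in both bounds of the corollary. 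This telescoping-of-alignment observation is really the only nonroutine ingredient; everything else is algebra.

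Next I would count iterations in each regime by inverting the rates. In the worst-case scenario, Theorem~\ref{th:1/t} gives $f(x_t)-\min_\mathcal{C}f\leq4LD^2/(t+2)$, so $t=\mathcal{O}(LD^2/\epsilon)$ iterations suffice; multiplying by the per-iteration cost produces $\mathcal{O}(LD^2\min\{K,1/\delta\}/\epsilon)$. In the practical scenario, Theorem~\ref{th:boofwgood} gives $f(x_t)-\min_\mathcal{C}f\leq(LD^2/2)\exp(-\delta^2(\mu/L)\omega t^p)$; solving $(LD^2/2)\exp(-\delta^2(\mu/L)\omega t^p)\leq\epsilon$ for $t$ by taking logarithms and then the $1/p$ power yields $t=\mathcal{O}\big(((1/(\omega\delta^2))(L/\mu)\ln(1/\epsilon))^{1/p}\big)$, and multiplying by the per-iteration cost gives the stated second bound.

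The main obstacle, modest as it is here, is the cap $K_t\leq1+1/\delta$: one must notice that the alignment improvement of at least $\delta$ guaranteed per round, combined with the boundedness of the cosine by $1$, limits the number of rounds independently of $K$, so that the per-iteration cost is genuinely $\mathcal{O}(\min\{K,1/\delta\})$ rather than $\mathcal{O}(K)$. The remaining work is the routine inversion of the two rate estimates, where the only mild care needed is handling the stretched exponential $t^p$ in the practical case.
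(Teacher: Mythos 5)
Your proposal is correct and follows essentially the same route as the paper: the per-iteration cost is capped by $K_t\leq\min\{K,1/\delta+1\}$ via Proposition~\ref{prop}\ref{prop:eta} together with $\eta_t\leq1$, and the iteration counts are obtained by inverting the rates of Theorem~\ref{th:1/t} and Theorem~\ref{th:boofwgood}. No gaps.
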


\begin{proof}
 Let $t\in\llbracket0,T-1\rrbracket$. By Proposition~\ref{prop}\ref{prop:eta}, $\eta_t\geq(K_t-1)\delta$, and since $\eta_t\leq1$, we have $K_t\leq1/\delta+1$. Thus, $K_t\leq\min\{K,1/\delta+1\}$. We conclude by estimating $T$ via Theorem~\ref{th:1/t} or Theorem~\ref{th:boofwgood}: $T=\mathcal{O}(LD^2/\epsilon)$ or $T=\mathcal{O}\Big(\Big(\frac{1}{\omega\delta^2}\frac{L}{\mu}\ln\big(\frac{1}{\epsilon}\big)\Big)^{1/p}\Big)$ respectively.
\end{proof}

\subsection{Computational experiments}
\label{apx:expproofs}

\begin{fact}[(Fact~\ref{fact:2n})]
 Consider $\mathbb{R}^n$ and let $\tau>0$. Then $\mathcal{B}_1(\tau)=\{[z]_{1:n}-[z]_{n+1:2n}\mid z\in\tau\Delta_{2n}\}$.
\end{fact}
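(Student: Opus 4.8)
The plan is to establish the set equality $\mathcal{B}_1(\tau)=\{[z]_{1:n}-[z]_{n+1:2n}\mid z\in\tau\Delta_{2n}\}$ by proving the two inclusions separately. Denote the right-hand side by $\mathcal{S}$. The key structural fact to exploit is that every $z\in\tau\Delta_{2n}$ satisfies $z\geq0$ and $\sum_{i=1}^{2n}[z]_i=\tau$, so if $x\coloneqq[z]_{1:n}-[z]_{n+1:2n}$ then $\|x\|_1=\sum_{i=1}^n|[z]_i-[z]_{n+i}|\leq\sum_{i=1}^n([z]_i+[z]_{n+i})=\tau$, using $|a-b|\leq a+b$ for nonnegative $a,b$. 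This immediately gives $\mathcal{S}\subseteq\mathcal{B}_1(\tau)$, which I would carry out first since it is the routine direction.

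For the reverse inclusion $\mathcal{B}_1(\tau)\subseteq\mathcal{S}$, I would take an arbitrary $x\in\mathbb{R}^n$ with $\|x\|_1\leq\tau$ and explicitly construct a witness $z\in\tau\Delta_{2n}$. The natural construction is to split each coordinate into its positive and negative parts: set $[z]_i\coloneqq\max\{[x]_i,0\}$ and $[z]_{n+i}\coloneqq\max\{-[x]_i,0\}$ for $i\in\llbracket1,n\rrbracket$, which guarantees $z\geq0$ and $[z]_i-[z]_{n+i}=[x]_i$, i.e.\ $[z]_{1:n}-[z]_{n+1:2n}=x$. The only thing to check is the normalization $\sum_{i=1}^{2n}[z]_i=\sum_{i=1}^n(|[x]_i|)=\|x\|_1$. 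If $\|x\|_1=\tau$ we are done; if $\|x\|_1<\tau$, I would absorb the slack $\tau-\|x\|_1\geq0$ by adding it equally (or arbitrarily) to a matched pair, say incrementing both $[z]_1$ and $[z]_{n+1}$ by $(\tau-\|x\|_1)/2$, which preserves the difference $[z]_1-[z]_{n+1}$ while raising the total mass to exactly $\tau$. This yields $z\in\tau\Delta_{2n}$ with the required difference, so $x\in\mathcal{S}$.

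The main obstacle, though a mild one, is handling the slack correctly in the second inclusion: the positive/negative part construction alone only lands on the \emph{boundary face} of $\tau\Delta_{2n}$ when $\|x\|_1=\tau$, so one must verify that the standard simplex (an equality-constrained set) can still represent points with smaller $\ell_1$-norm. The padding trick resolves this cleanly, and I would emphasize that any redistribution of the surplus mass across a matched index pair $(i,n+i)$ works, since adding equal amounts to $[z]_i$ and $[z]_{n+i}$ cancels in the difference. Together the two inclusions give the claimed equality.
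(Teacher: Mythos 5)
Your proposal is correct and follows essentially the same route as the paper: the easy inclusion via $|a-b|\leq a+b$ for nonnegative entries, and the reverse inclusion via the positive/negative part decomposition with the slack $\tau-\|x\|_1$ absorbed by padding matched index pairs. The only cosmetic difference is that the paper spreads the slack uniformly as $\delta=(\tau-\|x\|_1)/(2n)$ over all $2n$ coordinates, whereas you place it entirely on the pair $(1,n+1)$; both choices preserve the differences $[z]_i-[z]_{n+i}$ and yield total mass $\tau$.
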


\begin{proof}
 Let $x\in\mathcal{B}_1(\tau)$. Define $\delta\coloneqq(\tau-\|x\|_1)/2n\geq0$ and $z\in\mathbb{R}^{2n}$ by 
 \begin{align*}
  [z]_i\coloneqq 
  \begin{cases}
   [x]_i+\delta&\text{if }[x]_i\geq0\\
   \delta&\text{if }[x]_i<0
  \end{cases}
  \quad\text{and}\quad
  [z]_{n+i}\coloneqq
  \begin{cases}
   \delta&\text{if }[x]_i\geq0\\
   -[x]_i+\delta&\text{if }[x]_i<0
  \end{cases}
 \end{align*}
 for all $i\in\llbracket1,n\rrbracket$. Then $x=[z]_{1:n}-[z]_{n+1:2n}$ and $z\geq0$.
 Furthermore, 
 \begin{align*}
  1^\top z
  &=\sum_{i=1}^n([z]_i+[z]_{n+i})\\
  &=\sum_{i=1}^n(|[x]_i|+2\delta)\\
  &=\|x\|_1+2n\delta\\
  &=\tau
 \end{align*}
 so $z\in\tau\Delta_{2n}$.
 
 For the reverse direction, let $z\in\tau\Delta_{2n}$ and $x\coloneqq[z]_{1:n}-[z]_{n+1:2n}$. Then
 \begin{align*}
  \|x\|_1
  &=\sum_{i=1}^n|[z]_i-[z]_{n+i}|\\
  &\leq\sum_{i=1}^n([z]_i+[z]_{n+i})\\
  &=\tau
 \end{align*}
 so $x\in\mathcal{B}_1(\tau)$.
\end{proof}

\end{document}